\definecolor{blue(munsell)}{rgb}{0.0, 0.5, 0.69}
\def\l@subsection{\@tocline{2}{0pt}{2pc}{5pc}{}}
\newenvironment{psmallmatrix}{\left(\begin{smallmatrix}}{\end{smallmatrix}\right)}
\numberwithin{equation}{section}
\DeclareMathOperator{\Hom}{Hom}
\DeclareMathOperator{\Ob}{Ob}
\DeclareMathOperator{\Mod}{Mod}
\DeclareMathOperator{\modfp}{mod}
\DeclareMathOperator{\Fundg}{Fun_{dg}}
\DeclareMathOperator{\hproj}{h-proj}
\DeclareMathOperator{\pretr}{pretr}
\DeclareMathOperator{\cone}{C}
\DeclareMathOperator{\Tw}{Tw}
\DeclareMathOperator{\Proj}{Proj}
\DeclareMathOperator{\DGProj}{DGProj}
\DeclareMathOperator{\compdg}{dgm}
\DeclareMathOperator{\dercomp}{\mathsf{D}}
\DeclareMathOperator{\dercompdg}{\mathsf{D}_{\mathrm{dg}}}
\DeclareMathOperator{\RHom}{\mathbb R\!\Hom}
\DeclareMathOperator{\lotimes}{\overset{\mathbb L}{\otimes}}
\newcommand{\cat}{\mathscr}
\newcommand{\varcat}{\mathbf}
\newcommand{\opp}[1]{{#1}^{\mathrm{op}}}
\newcommand{\kat}{\mathsf}
\newcommand{\Hqe}{\kat{Hqe}}
\newcommand{\holim}{\mathrm{ho}\!\varprojlim}
\newcommand{\hocolim}{\mathrm{ho}\!\varinjlim}
\newcommand{\basering}[1]{\mathbf{#1}}
\newtheorem{theorem}{Theorem}[section]
\newtheorem*{theorem*}{Theorem}
\newtheorem{proposition}[theorem]{Proposition}
\newtheorem{corollary}[theorem]{Corollary}
\newtheorem{lemma}[theorem]{Lemma}
\theoremstyle{remark}
\newtheorem{remark}[theorem]{Remark}
\newtheorem{example}[theorem]{Example}
\theoremstyle{definition}
\newtheorem{definition}[theorem]{Definition}
\newtheorem{construction}[theorem]{Construction}
\title{T-structures on unbounded twisted complexes}
\author{Francesco Genovese} 
\address[Francesco Genovese]{Univerzita Karlova, Matematicko-fyzik\'{a}ln\'{i} fakulta, Katedra Algebry, Sokolovsk\'{a} 49/83, 186 75 Praha 8, \v{C}esk\'a republika.}
\email{genovese@karlin.mff.cuni.cz}
\thanks{The author acknowledges the support of the Czech Science Foundation grant [GA \v{C}R 20-13778S]}
\begin{document}

\begin{abstract}
This paper is a sequel to \emph{T-structures and twisted complexes on derived injectives} by the same author with W. Lowen and M. Van den Bergh. We define a dg-category of unbounded twisted complexes on a dg-category, which is particularly interesting in the case of dg-categories of derived injectives or derived projectives associated to a t-structure. On such unbounded twisted complexes we define a natural ``injective'' and dually a ``projective'' t-structure. This is intended as a direct generalization of the homotopy categories of injective or projective objects of an abelian category. 
\end{abstract}

\maketitle

\tableofcontents

\section*{Introduction}
If $\mathfrak A$ is an abelian category (typically, the category of modules over a ring or the category of quasi-coherent sheaves on a scheme), we can define its \emph{derived category} $\dercomp(\mathfrak A)$ by taking complexes of objects of $\mathfrak A$ and formally inverting quasi-isomorphisms.

If $\mathfrak A$ has enough injectives, it is well known that the bounded above derived category $\dercomp^+(\mathfrak A)$ can be described as the homotopy category of bounded below complexes of injective objects $\mathsf{K}^+(\operatorname{Inj}(\mathfrak A))$:
\begin{equation} \label{equation:hoinjectives_plus}
\dercomp^+(\mathfrak A) \cong \mathsf{K}^+(\operatorname{Inj}(\mathfrak A)).
\end{equation}
We have a dual analogous result if $\mathfrak A$ has enough projectives:
\begin{equation} \label{equation:hoprojectives_minus}
\dercomp^-(\mathfrak A) \cong \mathsf{K}^-(\operatorname{Proj}(\mathfrak A)).
\end{equation}

More recently, the (unbounded) \emph{homotopy category of injectives} and the \emph{homotopy category of projectives} $\mathsf{K}(\operatorname{Inj}(\mathfrak A))$ and $\mathsf{K}(\operatorname{Proj}(\mathfrak A))$ have been studied, in particular when $\mathfrak A$ is the category of modules over some suitable ring. Basic references are \cite{krause-stable} \cite{iyengar-krause-acyclicity} \cite{jorgensen-projective} \cite{neeman-homotopyflat} \cite{murfet-mockproj} \cite{neeman-homotopyinjectives} \cite{stovicek-purity}. An example of a result achieved by those investigations is as follows: if $R$ is a Noetherian commutative ring admitting a dualizing complex, we can interpret \emph{Grothendieck duality} as an equivalence of triangulated categories
\begin{equation} \label{equation:grothendieckduality}
\mathsf{K}(\operatorname{Proj}(R)) \cong \mathsf{K}(\operatorname{Inj}(R)),
\end{equation}
cf. \cite[Theorem 4.2]{iyengar-krause-acyclicity}. Both categories are compactly generated (see \cite[Proposition 2.3]{krause-stable} and \cite{jorgensen-projective}), and the above equivalence restricts to an equivalence between the compact objects:
\begin{equation}
    \opp{\dercomp^{\mathrm b}(\modfp(R))} \cong \dercomp^{\mathrm b}(\modfp(R)),
\end{equation}
where $\modfp(R)$ is the category of finitely presented $R$-modules.

We would like to set the above discussion in a broader framework. It is clear from recent work \cite{genovese-lowen-vdb-dginj} \cite{genovese-ramos-gabrielpopescu} that the ``correct'' way of generalizing results and constructions of abelian categories (and their derived categories) is to consider \emph{(enhanced) triangulated categories endowed with t-structures}\footnote{Or, à la Lurie, \emph{prestable $\infty$-categories \cite[\S C]{lurie-SAG}}.}, instead of plain triangulated categories: t-structures provide the necessary ``bridge'' between the classical (abelian) framework and the derived (higher) framework, and allow for direct generalizations of concepts such as injective or projective objects or results such as the Gabriel-Popescu theorem. 

We will work with \emph{differential graded (dg-) categories} as enhancements of triangulated categories (cf. \cite{bondal-kapranov-enhanced}). Following the above philosophy, the t-exact quasi-equivalence
\begin{equation}
    \cat A^+ \cong \Tw^+(\operatorname{DGInj(\cat A)}),
\end{equation}
was proven in \cite{genovese-lowen-vdb-dginj}, where $\cat A$ is a suitable pretriangulated dg-category with a t-structure and \emph{enough derived injectives}, and $\Tw^+(\operatorname{DGInj(\cat A)})$ is the dg-category of bounded below \emph{twisted complexes of derived injectives}, endowed with a suitable t-structure. There is a dual result when $\cat A$ has enough derived projectives:
\begin{equation}
    \cat A^- \cong \Tw^-(\DGProj(\cat A)).
\end{equation}
It is clear that the above results directly generalize \eqref{equation:hoinjectives_plus} and \eqref{equation:hoprojectives_minus}, which can be recovered by taking $\cat A = \dercompdg(\mathfrak A)$ to be the derived dg-category of a suitable abelian category $\mathfrak A$, endowed with the canonical t-structure with heart $\mathfrak A$.

The goal of this paper is to find a suitable generalization of the unbounded homotopy categories of injectives or projectives in this broader framework of t-structures. It turns out that we can define a nicely behaved dg-category of \emph{unbounded twisted complexes} $\Tw(\varcat A)$ over a suitable dg-category $\varcat A$ (which will be, in most real life applications, a dg-category of derived injectives or projectives of some given t-structure, namely, of the form $\operatorname{DGInj}(\cat A)$ or $\DGProj(\cat A)$). The main result (Theorem \ref{theorem:tstruct_unbounded}) will allow us to ``extend'' the t-structure on $\Tw^+(\operatorname{DGInj}(\cat A))$ (dually, on $\Tw^-(\DGProj(\cat A))$ to a uniquely determined \emph{injective t-structure} on $\Tw(\operatorname{DGInj}(\cat A))$ (dually, a uniquely determined \emph{projective t-structure} on $\Tw(\DGProj(\cat A))$). In particular, we will be able to speak of the \emph{cohomology} of a twisted complex of derived injectives or derived projectives, and we will have a natural notion of acyclicity.

This work is a direct sequel (or perhaps a spin-off) of \cite{genovese-lowen-vdb-dginj} and is completely foundational. It is the necessary basis of an upcoming project where the properties of dg-categories of the form $\Tw(\operatorname{DGInj}(\cat A))$ or $\Tw(\DGProj(\cat A))$ are investigated. For example, we expect -- under suitable assumptions -- compact generation. Moreover, if $R$ is a dg-algebra cohomologically concentrated in nonpositive degrees, we have dg-categories of derived injectives and derived projectives $\operatorname{DGInj}(R)$ and $\DGProj(R)$ associated to the canonical t-structure on the derived dg-category $\dercompdg(R)$; under suitable assumptions on $R$, the Grothendieck duality \eqref{equation:grothendieckduality} should hopefully be generalized to a quasi-equivalence
\[
\Tw(\DGProj(R)) \cong \Tw(\operatorname{DGInj}(R)).
\]

Nonpositive dg-algebras are essentially \emph{affine derived schemes}; we believe that the ``homotopy category of derived injectives'' described by such unbounded twisted complexes could also play a role in general derived algebraic geometry. This and possibly more will be addressed in future work.

\subsection*{Structure of the paper}
In \S \ref{section:dg_preliminaries} we deal with the background and preliminary results on dg-categories which we will need throughout the rest of the paper. In \S \ref{subsection:basics} we provide a concise survey on the basic concepts of dg-category theory. In \S \ref{subsec:pretr_dgcat} we discuss shifts of objects and cones of closed degree $0$ morphisms in dg-categories. There, we prove the technical Lemma \ref{lemma:iso_cones}, which deals with induced isomorphisms between cones and will be used in later results in the paper. \S \ref{subsec:addzeroobj} and \S \ref{subsec:addsumprod} deal with adjoining \emph{strict} zero objects, direct sums and products to a given dg-category. In particular, we prove Lemma \ref{lemma:homotopyproducts_closure}, where we check that we can replace a dg-category with ``homotopy products'' (or coproducts) with a dg-category with strict products (or coproducts). After dealing with sequential homotopy (co)limits in \S \ref{subsection:hocolim} and truncations in \S \ref{subsection:truncations}, we introduce the key notions of \emph{t-structure} and \emph{co-t-structure} on pretriangulated dg-categories in \S \ref{subsection:tstruct_def}.

In \S \ref{section:twistedcomplexes} we introduce the main object of our work, namely, \emph{twisted complexes} over a given dg-category. After discussing the basic definitions in \S \ref{subsection:twcomplexes_basics}, we show in \S \ref{subsection:twcompl_brutaltruncations} how a twisted complex can be reconstructed by taking suitable (homotopy) limits or colimits along suitable (left or right) \emph{brutal truncations}, see Corollary \ref{corollary:brutaltrunc_limitcolimit}. \S \ref{subsection:twcompl_iso} is devoted to proving Proposition \ref{proposition:iso_twisted_components}, which tells us that isomorphisms of twisted complexes can be characterized as ``componentwise isomorphisms''. In \S \ref{subsection:twcompl_qeq} we carefully check that taking twisted complexes preserves quasi-fully faithful dg-functors and quasi-equivalences (Lemma \ref{lemma:tw_quasifullyfaithful} and Proposition \ref{prop:Tw_preserve_quasiequivalences}). Finally, in \S  \ref{subsec:tw_addsumprod} we deal with closure of the dg-category of twisted complexes under cones, products or coproducts.

The last section of the paper (\S \ref{section:tstructures}) is devoted to the main result of the article. As mentioned above, our main goal is to prove that the dg-category of unbounded twisted complexes over a suitable dg-category is endowed with both a natural co-t-structure (discussed in \S \ref{subsection:cotstruct_twisted}) and a natural t-structure, ``extending'' either the t-structure on bounded below twisted complexes or the one on bounded above twisted complexes, already known from \cite{genovese-lowen-vdb-dginj} and revisited in \S \ref{subsection:tstruct_bounded_twisted}.
\begin{theorem*}[see Proposition \ref{proposition:twistedcomplexes_cotstruct} and Theorem \ref{theorem:tstruct_unbounded}]
Let $\varcat A$ be a dg-category with suitable properties. The dg-category $\Tw(\varcat A)$ of unbounded twisted complexes is endowed with a co-t-structure $(\Tw(\varcat A)^w_{\geq 0}, \Tw(\varcat A)^w_{\leq 0})$, where $\Tw(\varcat A)^w_{\geq 0}$ is essentially given by the twisted complexes concentrated in nonnegative degrees and $\Tw(\varcat A)^w_{\leq 0}$ is essentially given by the twisted complexes concentrated in nonpositive degrees. 

Suppose that our assumptions on $\varcat A$ guarantee that the dg-category $\Tw^-(\varcat A)$ of bounded above twisted complexes is endowed with the t-structure described in \cite{genovese-lowen-vdb-dginj} (see also Proposition \ref{proposition:tstructures_boundedtw}). Then, there is a unique t-structure $(\Tw(\varcat A)^{\operatorname{proj}}_{\leq 0}, \Tw(\varcat A)^{\operatorname{proj}}_{\geq 0})$ on $\Tw(\varcat A)$, called the \emph{projective t-stucture}, such that
\[ 
\Tw(\varcat A)^{\operatorname{proj}}_{\leq 0} = \Tw^-(\varcat A)_{\leq 0} = \Tw(\varcat A)^w_{\leq 0}
\]
and the inclusion $\Tw^-(\varcat A) \hookrightarrow \Tw(\varcat A)$ is t-exact. The heart of such t-structure is equivalent to $\modfp(H^0(\varcat A))$, the category of finitely presented right $H^0(\varcat A)$-modules.

Dually, suppose that our assumptions on $\varcat A$ guarantee that the dg-category $\Tw^+(\varcat A)$ of bounded below twisted complexes is endowed with the t-structure described in \cite{genovese-lowen-vdb-dginj} (see also Proposition \ref{proposition:tstructures_boundedtw}). Then, there is a unique t-structure $(\Tw(\varcat A)^{\operatorname{inj}}_{\leq 0}, \Tw(\varcat A)^{\operatorname{inj}}_{\geq 0})$ on $\Tw(\varcat A)$, called the \emph{injective t-stucture}, such that
\[
\Tw(\varcat A)^{\operatorname{inj}}_{\geq 0} = \Tw^+(\varcat A)_{\geq 0} = \Tw(\varcat A)^w_{\geq 0}
\]
and the inclusion $\Tw^+(\varcat A) \hookrightarrow \Tw(\varcat A)$ is t-exact. The heart of such t-structure is equivalent to $\opp{\modfp(H^0(\opp{\varcat A}))}$.
\end{theorem*}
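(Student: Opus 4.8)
The plan is to bootstrap everything from two things already at our disposal: the ``bounded'' t-structures on $\Tw^{\pm}(\varcat A)$ recalled in Proposition~\ref{proposition:tstructures_boundedtw}, and the reconstruction of an arbitrary twisted complex as a homotopy limit (resp.\ colimit) of its brutal truncations, Corollary~\ref{corollary:brutaltrunc_limitcolimit}. I will carry out only the projective half of the statement; the injective half is obtained by applying it to $\opp{\varcat A}$ together with the canonical identification $\Tw(\opp{\varcat A}) \simeq \opp{\Tw(\varcat A)}$, which exchanges $\Tw^{+}$ with $\Tw^{-}$, the two weight ranges $\Tw(\varcat A)^{w}_{\geq 0}$ and $\Tw(\varcat A)^{w}_{\leq 0}$, aisles with co-aisles, and $H^{0}(\varcat A)$ with $\opp{H^{0}(\varcat A)}$; under this translation the heart $\modfp(H^{0}(\opp{\varcat A}))$ of the projective t-structure on $\Tw(\opp{\varcat A})$ becomes $\opp{\modfp(H^{0}(\opp{\varcat A}))}$, exactly as asserted.

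For the co-t-structure I would let $\Tw(\varcat A)^{w}_{\geq 0}$ and $\Tw(\varcat A)^{w}_{\leq 0}$ be the full subcategories of objects that are, in $H^{0}(\Tw(\varcat A))$, isomorphic to a direct summand of a twisted complex genuinely concentrated in nonnegative, resp.\ nonpositive, degrees; Proposition~\ref{proposition:iso_twisted_components} makes this ``structural'' description well behaved. Stability under the relevant shift, under extensions, and (via \S\ref{subsec:tw_addsumprod}) under coproducts and products is read off from the explicit description of cones in \S\ref{subsec:pretr_dgcat}. The weight decomposition of a twisted complex $X$ is its brutal-truncation triangle $\sigma_{\geq 1}X \to X \to \sigma_{\leq 0}X \xrightarrow{+1}$ of \S\ref{subsection:twcompl_brutaltruncations}. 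The only point with real content is the semiorthogonality $\Hom_{H^{0}(\Tw(\varcat A))}(\Tw(\varcat A)^{w}_{\geq 1}, \Tw(\varcat A)^{w}_{\leq 0}) = 0$: a closed degree-$0$ morphism from a complex concentrated in degrees $\geq 1$ to one concentrated in degrees $\leq 0$ has every component lying in a strictly positive cohomological degree of a mapping complex of $\varcat A$, hence is a coboundary there, and — the source being bounded below and the target bounded above — a filtration argument over, say, the brutal truncations of the source glues the componentwise primitives into a global contracting homotopy. This is where the cohomological connectivity of $\varcat A$, part of our standing hypotheses, enters.

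For the projective t-structure I would set $\Tw(\varcat A)^{\operatorname{proj}}_{\leq 0} := \Tw(\varcat A)^{w}_{\leq 0}$ and let $\Tw(\varcat A)^{\operatorname{proj}}_{\geq 0}$ be its right orthogonal (with the appropriate shift). Since a t-structure is determined by its aisle, uniqueness and all the asserted identities will be automatic once one produces, for each $X \in \Tw(\varcat A)$, a triangle $\tau_{\leq 0}X \to X \to \tau_{\geq 1}X \xrightarrow{+1}$ with $\tau_{\leq 0}X \in \Tw(\varcat A)^{w}_{\leq 0}$ and $\tau_{\geq 1}X \in (\Tw(\varcat A)^{w}_{\leq 0})^{\perp}$. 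To build it: by Corollary~\ref{corollary:brutaltrunc_limitcolimit}, $X \simeq \holim_{n}\sigma_{\leq n}X$, and each $\sigma_{\leq n}X$ is bounded above, hence an object of $\Tw^{-}(\varcat A)$, where Proposition~\ref{proposition:tstructures_boundedtw} gives a functorial truncation triangle $\tau^{-}_{\leq 0}\sigma_{\leq n}X \to \sigma_{\leq n}X \to \tau^{-}_{\geq 1}\sigma_{\leq n}X \xrightarrow{+1}$; passing to the homotopy limit over $n$ — which exists in $\Tw(\varcat A)$ and genuinely computes a homotopy limit by \S\ref{subsec:tw_addsumprod} and Lemma~\ref{lemma:homotopyproducts_closure} — yields a triangle with middle term $X$, and I set $\tau_{\leq 0}X := \holim_{n}\tau^{-}_{\leq 0}\sigma_{\leq n}X$ and $\tau_{\geq 1}X := \holim_{n}\tau^{-}_{\geq 1}\sigma_{\leq n}X$. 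Because $\Tw^{-}(\varcat A) \hookrightarrow \Tw(\varcat A)$ is fully faithful and $\Tw^{-}(\varcat A)_{\leq 0} = \Tw(\varcat A)^{w}_{\leq 0}$ by Proposition~\ref{proposition:tstructures_boundedtw}, the t-structure axioms in $\Tw^{-}(\varcat A)$ give $\tau^{-}_{\geq 1}\sigma_{\leq n}X \in (\Tw(\varcat A)^{w}_{\leq 0})^{\perp}$, and since a co-aisle is closed under products, negative shifts and extensions — hence under homotopy limits of towers — also $\tau_{\geq 1}X \in (\Tw(\varcat A)^{w}_{\leq 0})^{\perp}$. The remaining point, that $\tau_{\leq 0}X \in \Tw(\varcat A)^{w}_{\leq 0}$, is the step I expect to be the main obstacle: each $\tau^{-}_{\leq 0}\sigma_{\leq n}X$ lies in $\Tw^{-}(\varcat A)_{\leq 0} = \Tw(\varcat A)^{w}_{\leq 0}$, but aisles are \emph{not} closed under homotopy limits, so one must argue with the structural description directly — concretely, that along the tower the truncations $\tau^{-}_{\leq 0}\sigma_{\leq n}X$ stabilize in each fixed degree (adjoining to $\sigma_{\leq n}X$ a top term $X^{n+1}$, a coconnective object of $\varcat A$ placed in high degree, does not change the part of the $\leq 0$-truncation supported in any given degree), so that the homotopy limit is computed degreewise and stays concentrated in degrees $\leq 0$. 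This reduction to a degreewise-stabilization statement is where the precise definition of unbounded twisted complexes and the connectivity of $\varcat A$ really get used. Granting it, $\Tw(\varcat A)^{w}_{\leq 0}$ is an aisle (an object of the aisle equals its own $\tau_{\leq 0}$, hence lies in $\Tw(\varcat A)^{w}_{\leq 0}$), and t-exactness of $\Tw^{-}(\varcat A) \hookrightarrow \Tw(\varcat A)$ is immediate: if $X$ is bounded above then $\sigma_{\leq n}X = X$ for $n \gg 0$, so $\tau_{\leq 0}X = \tau^{-}_{\leq 0}X$ and $\tau_{\geq 1}X = \tau^{-}_{\geq 1}X$ and $\Tw^{-}(\varcat A)$ is stable under the truncation functors.

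It remains to identify the heart $\mathcal H := \Tw(\varcat A)^{w}_{\leq 0} \cap (\Tw(\varcat A)^{w}_{\leq -1})^{\perp}$. First, the co-heart $\Tw(\varcat A)^{w}_{\geq 0} \cap \Tw(\varcat A)^{w}_{\leq 0}$ is the idempotent completion of $H^{0}(\varcat A)$ inside $H^{0}(\Tw(\varcat A))$ — a one-term twisted complex admits no nontrivial twisting — which equals $H^{0}(\varcat A)$ under our hypotheses (which include, as for $\DGProj$, idempotent-completeness of $H^{0}(\varcat A)$). I would then define $\Phi \colon \mathcal H \to \modfp(H^{0}(\varcat A))$ by sending $X$, represented by a twisted complex concentrated in degrees $\leq 0$, to the finitely presented right $H^{0}(\varcat A)$-module $\coker\big(H^{0}(\varcat A)(-, X^{-1}) \to H^{0}(\varcat A)(-, X^{0})\big)$, the map being induced by the class of $x^{-1,0}$; Proposition~\ref{proposition:iso_twisted_components} together with the fact that the connecting maps of the brutal truncations become canonical once one is inside $\mathcal H$ shows this is independent of the chosen representative. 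For essential surjectivity: given $F = \coker\big(H^{0}(\varcat A)(-,b_{1}) \to H^{0}(\varcat A)(-,b_{0})\big)$, lift $b_{1} \to b_{0}$ to a closed degree-$0$ morphism, form $[b_{1} \to b_{0}] \in \Tw(\varcat A)^{w}_{\leq 0}$, and take the coconnective truncation $\tau_{\geq 0}[b_{1} \to b_{0}] \in \mathcal H$, which only alters terms in degrees $\leq -1$, so that $\Phi(\tau_{\geq 0}[b_{1} \to b_{0}]) \cong F$. For full faithfulness: given $X, Y \in \mathcal H$, one computes $\Hom_{H^{0}(\Tw(\varcat A))}(X, Y)$ by presenting $X$ as a homotopy colimit of its bounded brutal truncations — each an iterated cone of shifts of co-heart objects — and using that $Y$, being coconnective for the projective t-structure, kills the positive-degree mapping groups out of co-heart objects; the computation then collapses onto $H^{0}(\varcat A)$ and, via the Yoneda lemma and the presentations of $\Phi X$ and $\Phi Y$, matches $\Hom_{\modfp(H^{0}(\varcat A))}(\Phi X, \Phi Y)$. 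Hence $\Phi$ is an equivalence $\mathcal H \simeq \modfp(H^{0}(\varcat A))$; this, with the dual statement obtained as in the first paragraph, completes the plan.
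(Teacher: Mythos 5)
Your overall architecture coincides with the paper's: define the co-aisles via brutal truncations and closure under isomorphisms, get the weight decomposition from the pretriangle of Proposition \ref{prop:pretriangle_cotstruct}, obtain the projective t-structure by exhibiting a right adjoint to the inclusion of $\Tw(\varcat A)^w_{\leq 0}$ constructed as $\tau_{\leq 0}X^\bullet = \holim_k \tau_{\leq 0}(\sigma_{\leq k}X)^\bullet$, and identify the heart through Proposition \ref{proposition:tstructures_boundedtw}. One small correction before the main point: the semiorthogonality for the co-t-structure needs no filtration argument and no gluing of homotopies. After replacing $X^\bullet$ and $Y^\bullet$ by strict representatives concentrated in degrees $\geq 0$ and $\leq -1$, every component $f_i^j$ of a degree $0$ morphism sits in degree $i-j>0$ of a Hom complex of $\varcat A$, which is \emph{zero} (not merely a coboundary) since $\varcat A$ is strictly concentrated in nonpositive degrees; the Hom complex vanishes in degree $0$ outright.

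The genuine gap is exactly the step you flag as ``the main obstacle'': that $\holim_n \tau^-_{\leq 0}\sigma_{\leq n}X^\bullet$ lies in $\Tw(\varcat A)^w_{\leq 0}$. The paper does not prove this from the derived-projectives hypotheses; it is an additional standing \emph{assumption} of Theorem \ref{theorem:tstruct_unbounded}, namely that $\tau_{\leq 0}\Tw(\varcat A)^w_{\leq 0}$ is closed under sequential homotopy limits as a dg-category concentrated in nonpositive degrees (condition \eqref{equation:holim_nonpositive}), verified separately in Examples \ref{example:dgcat_closed_countable_limits_applythm} and \ref{example:grothendiecktstruct}. With that hypothesis the homotopy limit is formed \emph{inside} the aisle by construction, and the adjunction is then checked by a chain of isomorphisms of Hom complexes (Lemmas \ref{lemma:truncations_iso_complexes} and \ref{lemma:technical_trunc_holim} plus Corollary \ref{corollary:brutaltrunc_limitcolimit}); there is no need to pass homotopy limits across the truncation triangles, a maneuver which in your write-up is itself unjustified, since a homotopy limit of a tower of distinguished triangles need not be distinguished and carries $\varprojlim^1$ obstructions. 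Your proposed ``degreewise stabilization'' of the $\tau^-_{\leq 0}\sigma_{\leq n}X^\bullet$ is only a heuristic as written: $\tau^-_{\leq 0}$ is a t-structure truncation, not a brutal one, and you give no argument that adjoining a top term leaves it unchanged in each degree. (It is plausibly repairable -- for $n \geq 2$ the transition maps induce isomorphisms on all $H^i_t$, and non-degeneracy of the t-structure on $\Tw^-(\varcat A)$ would then make them isomorphisms -- but that argument is absent from your text, and you would still need some hypothesis to form the limit at all.) Lastly, your explicit heart functor $\Phi$ re-proves what Proposition \ref{proposition:tstructures_boundedtw} already supplies; the shorter route is to show $\Tw^-(\varcat A)_{\leq 0}\cap\Tw^-(\varcat A)_{\geq 0} = \Tw(\varcat A)^{\operatorname{proj}}_{\leq 0}\cap\Tw(\varcat A)^{\operatorname{proj}}_{\geq 0}$ and quote the known identification of the bounded heart with $\modfp(H^0(\varcat A))$.
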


A few words on the ``suitable assumptions'' on $\varcat A$ in the Theorem above. The existence of the natural co-t-structure on $\Tw(\varcat A)$ is very general and follows if $\varcat A$ is cohomologically concentrated in nonpositive degrees, with $H^0(\varcat A)$ being also additive. On the other hand, the existence of the projective t-structure on $\Tw(\varcat A)$ follows essentially if $\varcat A$ is a \emph{dg-category of derived projectives} (Definition \ref{definition:dgcat_derproj}); dually, the existence of the injective t-structure on $\Tw(\varcat A)$ follows essentially if $\varcat A$ is a \emph{dg-category of derived injectives}. For completeness, we discuss in Appendix \ref{appendix:derproj_cotstruct} the notions of \emph{derived projective} and \emph{derived injective} objects in triangulated categories with t-structures, and we explain how such concepts are strictly connected to co-t-structures interacting nicely with the given t-structures (Theorem \ref{theorem:derproj_cotstruct}). This result is most likely already known by experts but we could not find any specific reference for it.

\subsection*{Acknowledgements}
The author thanks Wendy Lowen and Michel Van den Bergh for first introducing him to derived injective objects and twisted complexes as a tool to understand resolutions in the framework of t-structures.

The author also thanks Adam-Christiaan Van Roosmalen for explaining him co-t-structures and their connection to injective and projective objects.

\emph{Last but not least}, the author thanks Jan \v{S}\v{t}ov\'{\i}\v{c}ek for the many interesting discussions and useful comments during the preparation of the manuscript.

\section{Preliminaries on dg-categories} \label{section:dg_preliminaries}

We fix a base commutative ring $\basering k$. Unless otherwise specified our constructions are in the $\basering k$-linear context, although we do not always say it. 

We also fix a Grothendieck universe $\mathbb U$; unless otherwise specified, all categories will be locally $\mathbb U$-small. We also fix another universe $\mathbb V \ni \mathbb U$, so that even locally $\mathbb U$-small categories will be $\mathbb V$-small: this will be useful when we need to take categories of functors. That said, we will essentially disregard such set-theoretical issues.

\subsection{Basics} \label{subsection:basics}
We recall the definition of \emph{differential graded (dg) category} and some basic constructions. We assume the reader to have some familiarity with the theory. See also \cite{keller-dgcat} \cite{toen-lectures}.
\begin{definition}
A \emph{dg-category} $\cat A$ is a category enriched over the closed symmetric monoidal category of chain complexes over $\basering k$. Concretely, it consists of a collection of objects $\Ob(\cat A)$, and for any pair of objects $A, B \in \Ob(\cat A)$ a complex of morphisms $\cat A(A,B)$, with unital and associative compositions:
\[
\cat A(B,C) \otimes \cat A(A,B) \to \cat A(A,C).
\]

Dg-functors between dg-categories are defined in the obvious way.
\begin{itemize}
    \item If $\cat A$ is a dg-category, we can define the \emph{opposite dg-category} $\opp{\cat A}$.
    \item If $\cat A$ is a dg-category, we have the \emph{homotopy category} $H^0(\cat A)$ and the \emph{graded homotopy category} $H^*(\cat A)$. They are obtained by taking the same set of objects and then zeroth cohomology or graded cohomology of the complexes of morphisms.
    \item Complexes of $\basering k$-modules form a dg-category $\compdg(\basering k)$.
    \item Let $\cat A$ and $\cat B$ be dg-categories. There is a \emph{tensor product} $\cat A \otimes \cat B$ and a \emph{dg-category of dg-functors} $\Fundg(\cat A, \cat B)$.
    \item Let $\cat A$ be a dg-category. We denote by
    \[
    \compdg(\cat A) = \Fundg(\opp{\cat A}, \compdg(\basering k))
    \]
    the dg-category of \emph{right $\cat A$-dg-modules}. The Yoneda Lemma holds and yields the Yoneda embedding:
    \begin{align*}
    h = h_{\cat A} \colon \cat A & \hookrightarrow \compdg(\cat A), \\
    A & \mapsto \cat A(-,A).
    \end{align*}
    \item Let $\cat A$ be a dg-category. Its \emph{derived category} $\dercomp(\cat A)$ is defined as the Verdier quotient of $H^0(\compdg(\cat A))$ by the subcategory of acyclic dg-modules. 
    
    Dg-enhancements of $\dercomp(\cat A)$ can be described using \emph{h-projective} of $\emph{h-injective}$ dg-modules (see \cite{lunts-schnurer-smoothness-equivariant} for details). Namely:
    \[
    H^0(\operatorname{h-proj}(\cat A)) \cong \dercomp(\cat A), \qquad H^0(\operatorname{h-inj}(\cat A)) \cong \dercomp(\cat A).
    \]

    We remark that the Yoneda embedding factors through $\hproj(\cat A)$:
    \[
    \cat A \hookrightarrow \hproj(\cat A),
    \]
    hence it induces a \emph{derived Yoneda embedding}
    \[
    H^0(\cat A) \hookrightarrow \dercomp(\cat A).
    \]
\end{itemize}
\end{definition}

\subsubsection*{Quasi-functors} We have a category $\kat{dgCat}$ of (small with respect to some universe) dg-categories and dg-functors. By formally inverting the \emph{quasi-equivalences}, we obtain the \emph{homotopy category of dg-categories}
\[
\Hqe = \kat{dgCat}[\operatorname{Qe}^{-1}].
\]
The tensor product of dg-categories can be derived, yielding a symmetric monoidal structure $- \lotimes -$ in $\Hqe$. An important result (see \cite{toen-dgcat-invmath} \cite{canonaco-stellari-internalhoms}) is that the resulting symmetric monoidal category $(\Hqe, \lotimes)$ is closed. Namely, for (small) dg-categories $\cat A$ and $\cat B$ there is a dg-category $\RHom(\cat A, \cat B)$ and a natural isomorphism
\[
\Hqe(\cat C \lotimes \cat A, \cat B) \cong \Hqe(\cat C, \RHom(\cat A, \cat B)).
\]

The dg-category $\RHom(\cat A, \cat B)$ can be described in terms of \emph{quasi-functors} (see \cite{canonaco-stellari-internalhoms}). For our purposes, a quasi-functor $F \colon \cat A \to \cat B$ is a dg-functor $F \colon \cat A \to \compdg(\cat B)$ such that $F(A)$ is quasi-isomorphic to $\cat B(-,\Phi_F(A))$ for some $\Phi_F(A) \in \cat B$.

Any quasi-functor $F \colon \cat A \to \cat B$ induces a graded functor $H^*(F) \colon H^*(\cat A) \to H^*(\cat B)$. We say that $F$ is \emph{invertible}, or (with a little abuse of terminology) a \emph{quasi-equivalence}, if $H^*(F)$ is an equivalence. In that case, we may conclude that $\cat A$ and $\cat B$ are isomorphic in the homotopy category $\Hqe$.

\subsection{Pretriangulated dg-categories} \label{subsec:pretr_dgcat}
Let $\cat A$ be a dg-category. We denote by $\pretr(\cat A)$ the \emph{pretriangulated hull} of $\cat A$, namely, the closure of the image of the Yoneda embedding $\cat A \hookrightarrow \compdg(\cat A)$ in $\compdg(\cat A)$ under taking shifts and mapping cones. We remark that the full dg-subcategory $\hproj(\cat A)$ of h-projective dg-modules in $\compdg(\cat A)$ contains the image of the Yoneda embedding and is closed under shifts and mapping cones. Hence, the Yoneda embedding factors as follows:
\[
\cat A \hookrightarrow \pretr(\cat A) \hookrightarrow \hproj(\cat A) \hookrightarrow \compdg(\cat A).
\]

\begin{definition}[cf. {\cite{bondal-kapranov-enhanced}}]
Let $\cat A$ be a dg-category. We say that $\cat A$ is \emph{strongly pretriangulated} if $\cat A \hookrightarrow \pretr(\cat A)$ is a dg-equivalence. We say that $\cat A$ is \emph{pretriangulated} if $\cat A \hookrightarrow \pretr(\cat A)$ is a quasi-equivalence.
\end{definition}
The dg-categories $\compdg(\cat A), \hproj(\cat A), \operatorname{h-inj}(\cat A), \pretr(\cat A)$ are all strongly pretriangulated. If $\cat A$ is pretriangulated, we can replace it up to quasi-equivalence by $\pretr(\cat A)$, which is strongly pretriangulated.

The homotopy category $H^0(\cat A)$ of a pretriangulated dg-category $\cat A$ has a ``canonical'' structure of triangulated category. The crucial property of pretriangulated dg-categories is that, unlike triangulated categories, they have \emph{functorial shifts and cones}.

We can check that a dg-category $\cat A$ is strongly pretriangulated if and only if it is closed under \emph{pretriangles}, namely, sequences of the form
\begin{equation} \label{equation:pretriangle}
\begin{tikzcd}[ampersand replacement=\&]
	A \& B \& {\cone(f)} \& {A[1],}
	\arrow["f", from=1-1, to=1-2]
	\arrow["p", shift left=1, from=1-3, to=1-4]
	\arrow["j", shift left=1, from=1-2, to=1-3]
	\arrow["s", shift left=1, from=1-3, to=1-2]
	\arrow["i", shift left=1, from=1-4, to=1-3]
\end{tikzcd}
\end{equation}
where $f \colon A \to B$ is a closed (that is, $df=0$) degree $0$ morphism in $\cat A$. $\cone(f)$ is called the \emph{cone of $f$} and $A[1]$ is called the \emph{$1$-shift of $A$}. In general, the $m$-shifts $A[m]$ of $A$ come with closed invertible degree $n-m$ morphisms (``shifted identitity morphisms'')
\[
1_{(A,n,m)} \colon A[n] \to A[m],
\]
such that $1_{(A,m,n)} \circ 1_{(A,n,m)} = 1_{(A,n,n)}=1_{A[n]}$. The morphisms $j,p,i,s$ in the pretriangle \eqref{equation:pretriangle} are of degree $0$ and characterize $\cone(f)$ as the biproduct $A[1] \oplus B$ in the underlying graded category of $\cat A$. Moreover, they satisfy the following equations:
\[
dj=0, \quad dp=0, \quad di = jf 1_{(A,1,0)}, \quad ds =  -f 1_{(A,1,0)} p.
\]
We refer to \cite[\S 4.3]{bondal-larsen-lunts-grothendieck} for more details.

We remark that a dg-category $\cat A$ is strongly pretriangulated if and only if its opposite $\opp{\cat A}$ is strongly pretriangulated. A pretriangle in $\opp{\cat A}$ corresponds to a ``rotated pretriangle'' in $\cat A$, namely, a sequence of the form:
\[
A[-1] \to \cone(f)[-1] \to A \xrightarrow{f} B. 
\]

Clearly, some shifts and cones (hence, some pretriangles) may exist in a given dg-category $\cat A$ even if $\cat A$ is not strongly pretriangulated.
\begin{remark}
Let $\cat A$ be a strongly pretriangulated dg-category. Consider the following (not necessarily commutative) diagram:
\[\begin{tikzcd}[ampersand replacement=\&]
	A \& B \\
	{A'} \& {B',}
	\arrow["f", from=1-1, to=1-2]
	\arrow["u"', from=1-1, to=2-1]
	\arrow["{f'}"', from=2-1, to=2-2]
	\arrow["v", from=1-2, to=2-2]
	\arrow["h"{description}, from=1-1, to=2-2]
\end{tikzcd}\]
where $f,f'$ are closed degree $0$ morphisms, $u,v$ are degree $n$ morphisms and $h$ is a degree $n-1$ morphism. We can find a morphism
\[
w \colon \cone(f) \to \cone(f')
\]
determined by the triple $(u,v,h)$, such that the central and the right squares of the following diagram are (strictly) commutative:
\[\begin{tikzcd}[ampersand replacement=\&]
	A \& B \& {\cone(f)} \& {A[1]} \\
	{A'} \& {B'} \& {\cone(f')} \& {A'[1].}
	\arrow["f", from=1-1, to=1-2]
	\arrow["u"', from=1-1, to=2-1]
	\arrow["{f'}"', from=2-1, to=2-2]
	\arrow["v", from=1-2, to=2-2]
	\arrow["j", from=1-2, to=1-3]
	\arrow["p", from=1-3, to=1-4]
	\arrow["{j'}", from=2-2, to=2-3]
	\arrow["{p'}", from=2-3, to=2-4]
	\arrow["w", dashed, from=1-3, to=2-3]
	\arrow["{u[1]}", from=1-4, to=2-4]
\end{tikzcd}\]
The rows are pretriangles and $j,j',p,p'$ are the natural morphisms associated to them. In matrix notation, with respect to the biproduct decompositions $\cone(f) = A[1] \oplus B$ and $\cone(f')=A'[1] \oplus B'$, the morphism $w$ and its differential $dw$ are given by:
\[
w = \begin{pmatrix} u[1] & 0 \\ h 1_{(A,1,0)} & v \end{pmatrix}, \qquad dw = \begin{pmatrix} (-du)[1] & 0 \\ (dh +f'u -(-1)^n vf) 1_{(A,1,0)} & dv \end{pmatrix}.
\]
\end{remark}

We now prove a technical lemma:
\begin{lemma} \label{lemma:iso_cones}
Let $\cat A$ be a dg-category. Consider the following diagram of objects and morphisms in $\cat A$:
\[\begin{tikzcd}[ampersand replacement=\&]
	A \& B \& {\cone(f)} \& {A[1]} \\
	{A'} \& {B'} \& {\cone(f')} \& {A'[1],}
	\arrow["f", from=1-1, to=1-2]
	\arrow["u"', from=1-1, to=2-1]
	\arrow["{f'}"', from=2-1, to=2-2]
	\arrow["v", from=1-2, to=2-2]
	\arrow["h"{description}, from=1-1, to=2-2]
	\arrow["j", from=1-2, to=1-3]
	\arrow["j'", from=2-2, to=2-3]
	\arrow["p", from=1-3, to=1-4]
	\arrow["p'", from=2-3, to=2-4]
	\arrow["{u[1]}", from=1-4, to=2-4]
	\arrow["w", from=1-3, to=2-3]
\end{tikzcd}\]
where the rows are pretriangles and all morphisms are closed and of degree $0$, except $h$ which is of degree $-1$ and such that 
\[
dh = vf -f'u.
\]
The middle and right squares of the above diagram are strictly commutative. The morphism $w$ is expressed in matrix notation as
\[
w = \begin{pmatrix} u[1] & 0 \\ h 1_{(A,1,0)} & v \end{pmatrix}.
\]

If $u$ and $v$ have inverses $u'$ and $v'$ in $H^0(\cat A)$, then there is a degree $-1$ morphism $h' \colon A' \to B$ such that
\[
dh' = v'f'-fu'
\]
and such that the morphism
\[
w' = \begin{pmatrix} u'[1] & 0 \\ h' 1_{(A',1,0)} & v' \end{pmatrix} \colon \cone(f') \to \cone(f)
\]
is an inverse of $w$ in $H^0(\cat A)$. The morphism $w'$ automatically fits in the following diagram:
\[\begin{tikzcd}[ampersand replacement=\&]
	A' \& B' \& {\cone(f')} \& {A'[1]} \\
	{A} \& {B} \& {\cone(f)} \& {A[1].}
	\arrow["f'", from=1-1, to=1-2]
	\arrow["u'"', from=1-1, to=2-1]
	\arrow["{f}"', from=2-1, to=2-2]
	\arrow["v'", from=1-2, to=2-2]
	\arrow["h'"{description}, from=1-1, to=2-2]
	\arrow["j'", from=1-2, to=1-3]
	\arrow["j", from=2-2, to=2-3]
	\arrow["p'", from=1-3, to=1-4]
	\arrow["p", from=2-3, to=2-4]
	\arrow["{u'[1]}", from=1-4, to=2-4]
	\arrow["w'", from=1-3, to=2-3]
\end{tikzcd}\]
The middle and right squares of the above diagram are strictly commutative.
\end{lemma}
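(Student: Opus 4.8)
The plan is to first show that $w$ is invertible in $H^0(\cat A)$, and then to deform a chosen representative of the inverse class into the prescribed matrix shape, reading off $h'$ at the end. For the invertibility: for every $X\in\cat A$ the functor $\cat A(X,-)$ sends each of the two rows to a pretriangle of complexes of $\basering k$-modules — this is because the differential of a cone $\cone(g)$ is ``lower triangular'' with off-diagonal entry a shift of $g$, so that $\cat A(X,\cone(g))$ is identified with the cone of the map induced by $g$ — and therefore $w$ induces a morphism of the associated long exact cohomology sequences, in which the maps induced by $u$ and $v$ and all their shifts are isomorphisms. By the five lemma, $[w]$ induces an isomorphism $H^{*}(\cat A)(X,\cone(f))\xrightarrow{\ \sim\ }H^{*}(\cat A)(X,\cone(f'))$ for every $X$, so $[w]$ is an isomorphism in $H^0(\cat A)$ by the Yoneda lemma (symmetrically, one could instead apply $\cat A(-,X)$).

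Next I would pick a closed degree-$0$ morphism $x\colon\cone(f')\to\cone(f)$ with $[x]=[w]^{-1}$ and write it as a matrix $\begin{psmallmatrix}a&b\\c&e\end{psmallmatrix}$ with respect to the biproduct decompositions $\cone(f')=A'[1]\oplus B'$ and $\cone(f)=A[1]\oplus B$. Lower triangularity of the cone differentials makes $b\colon B'\to A[1]$ a cocycle, while the upper-right corner of $[w][x]=1$ gives $[u[1]]\,[b]=0$, hence $[b]=0$ because $[u]$ is invertible. A short computation of $d\begin{psmallmatrix}0&k\\0&0\end{psmallmatrix}$ then shows that subtracting a suitable coboundary of this form from $x$ clears the entry $b$ without changing $[x]$, so we may assume $b=0$; now $a$ and $e$ are cocycles, and the identities $[w][x]=[x][w]=1$ force $[a]=[u'[1]]$ and $[e]=[v']$, so $a-u'[1]$ and $e-v'$ are coboundaries. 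Using homotopies of the two shapes $\begin{psmallmatrix}\rho[1]&0\\ \ast&0\end{psmallmatrix}$ and $\begin{psmallmatrix}0&0\\ \ast&\mu\end{psmallmatrix}$ — whose differentials, by the formula in the Remark preceding the statement, again have vanishing upper-right entry — I deform $x$ further to a homotopic closed morphism $w'$ whose diagonal entries are exactly $u'[1]$ and $v'$ and whose upper-right entry vanishes.

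The remaining lower-left entry of $w'$ is then a degree-$0$ morphism $c'\colon A'[1]\to B$; setting $h':=c'\,1_{(A',0,1)}$ (a morphism $A'\to B$ of degree $-1$, since $1_{(A',0,1)}$ has degree $-1$) we get $c'=h'\,1_{(A',1,0)}$, hence $w'=\begin{psmallmatrix}u'[1]&0\\ h'\,1_{(A',1,0)}&v'\end{psmallmatrix}$, still with $[w']=[w]^{-1}$. Feeding this matrix into the differential formula of that same Remark (with $n=0$) and using $dw'=0$, $du'=dv'=0$, one reads off $dh'=v'f'-fu'$. Finally, by the construction recalled in the Remark, any morphism of this matrix shape is the one determined by the triple $(u',v',h')$, and hence makes the middle and right squares of the last diagram of the statement strictly commute, which is the closing assertion.

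The point that needs care is the reshaping step: the upper-right entry must be cleared \emph{before} the diagonal entries are corrected (otherwise the diagonal entries are not cocycles), and the homotopies used afterwards must have differentials with vanishing upper-right entry — which is exactly why the two special shapes above are chosen. A more computational alternative, bypassing the invertibility argument, is to take $h':=-v'hu'+\tau fu'-v'f'\sigma'$, where $\sigma',\tau'$ are homotopies $uu'\sim 1$, $vv'\sim 1$ and $\sigma,\tau$ are homotopies $u'u\sim 1$, $v'v\sim 1$ chosen so that $u\sigma-\sigma'u$ and $v\tau-\tau'v$ are coboundaries (possible since $[u]$ and $[v]$ are invertible), and then to verify $w'w\sim 1_{\cone(f)}$ and $ww'\sim 1_{\cone(f')}$ by a direct matrix computation using the differential formula of the Remark; this is elementary but substantially longer.
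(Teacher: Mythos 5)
Your argument is correct, but it takes a genuinely different route from the paper's. The paper never establishes invertibility of $[w]$ abstractly: it writes down explicit candidates of the prescribed triangular shape for a left and a right inverse, namely $\begin{psmallmatrix} u'[1] & 0 \\ (h_0+z'_0)1_{(A',1,0)} & v' \end{psmallmatrix}$ and $\begin{psmallmatrix} u'[1] & 0 \\ (h_0+z'_1)1_{(A',1,0)} & v' \end{psmallmatrix}$, where $h_0$ is any primitive of $v'f'-fu'$ and $z'_0,z'_1$ are explicit closed corrections built from chosen homotopies witnessing $u'u\sim 1$, $uu'\sim 1$, $v'v\sim 1$, $vv'\sim 1$, and then verifies both identities by a direct matrix computation (your closing ``computational alternative'' is essentially this). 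That computation has a side benefit: it produces homotopies $w^l\circ w-1=dh^l$ and $w\circ w^r-1=dh^r$ with $h^l,h^r$ themselves of triangular shape, which is exactly the refinement recorded in Remark \ref{remark:iso_cones_boundaries} and exploited later in the proof of Proposition \ref{proposition:iso_twisted_components}. Your route (five lemma on the long exact sequences obtained by applying $\cat A(X,-)$ to the two pretriangles, Yoneda, then deformation of an arbitrary closed representative of $[w]^{-1}$ into triangular shape) is conceptually cleaner and proves the Lemma as stated, but does not by itself deliver that extra control on the homotopies.

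One step needs tightening. After clearing the upper-right entry you assert that $[w][x]=[x][w]=1$ ``forces'' $[a]=[u'[1]]$ and $[e]=[v']$. Reading this off the diagonal entries of the homotopy $wx-1=dK$ does not work: for $K=\begin{psmallmatrix} a_K & b_K \\ c_K & e_K \end{psmallmatrix}$ the upper-left entry of $dK$ is $da_K\pm b_Kf'1_{(A',1,0)}$, which is not in general a coboundary in $\Hom(A'[1],A'[1])$, so the classes of the diagonal entries of a null-homotopic closed matrix need not vanish. The claim is nevertheless true, and the correct derivation composes with the structure maps and uses the strict commutativity of the middle and right squares: from $p'w=u[1]p$ one gets $[u[1]]\,[p\,x\,i']=[p'(wx)i']=[p'i']=[1_{A'[1]}]$, hence $[a]=[u[1]]^{-1}=[u'[1]]$; and from $wj=j'v$ one gets $[s\,x\,j']\,[v]=[s(xw)j]=[sj]=[1_B]$, hence $[e]=[v']$. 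With that repair, the rest of your reshaping argument — and the extraction of $dh'=v'f'-fu'$ from $dw'=0$ via the differential formula of the Remark — goes through.
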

\begin{proof}
We first deal with the inverses in $H^0(\cat A)$, which is the most interesting case. From the equality 
\[
[vf] = [f'u]
\]
in $H^0(\cat A)$, and the invertibility of $[u]$ and $[v]$, we deduce that there exists a degree $-1$ morphism $h_0 \colon A' \to B$ such that
\[
v'f'-fu'=dh_0.
\]
The invertibility of $u$ and $v$ in $H^0(\cat A)$ is expressed explicitly as follows:
\begin{align*}
& \begin{cases}
u'u = 1_A - d\tilde{u}, \\
v'v = 1_B + d\tilde{v},
\end{cases}, \\
& \begin{cases}
uu' = 1_{A'} - d\tilde{u'}, \\
vv' = 1_{B'} + d\tilde{v'}.
\end{cases}
\end{align*}
for suitable maps $\tilde{u}, \tilde{u'}, \tilde{v}, \tilde{v'}$ of degree $-1$. We are going to find closed morphisms
\begin{align*}
    z'_0, z'_1 \colon A' \to B
\end{align*}
such that
\[
\left[\begin{pmatrix} u'[1] & 0 \\ (h_0+z'_0) 1_{(A',1,0)} & v' \end{pmatrix} \right]
\]
will be a left inverse of $[w]$, and
\[
\left[\begin{pmatrix} u'[1] & 0 \\ (h_0+z'_1) 1_{(A',1,0)} & v' \end{pmatrix} \right]
\]
will be a right inverse of $[w]$. Closedness of $z'_0$ and $z'_1$ implies that the above matrices yield closed morphisms, so that we get well defined morphisms in $H^0(\cat A)$. In the end, such left and right inverses will coincide (in $H^0(\cat A)$) and we may take $h'$ to be either $h_0+z'_0$ or $h_0+z'_1$. We start by setting
\begin{align*}
r & := f\tilde{u} + \tilde{v}f - h'u -v'h, \\
r' & := f'\tilde{u'}+\tilde{v'}f' - hu'-vh'. 
\end{align*}
A simple computation gives $dr=0, dr'=0$. We define:
\begin{align*}
z'_0 & := ru', \\
z'_1 & := v'r'.
\end{align*}
$z'_0$ and $z'_1$ are indeed closed; we have $[z'_0 u]= [r]$ and $[v z'_1] = [r']$ in $H^0(\cat A)$, namely
\begin{align*}
(h'+z'_0)u + v'h &= d\tilde{h} + f\tilde{u} + \tilde{v}f \\
hu' + v(h'+z'_1) &= d\tilde{h'} + f'\tilde{u'} + \tilde{v'}f',
\end{align*}
for suitable maps $\tilde{h}, \tilde{h'}$. Finally, we obtain
\begin{align*}
 \begin{pmatrix} u'[1] & 0 \\ (h_0+z'_0)1_{(A',1,0)} & v' \end{pmatrix} & \begin{pmatrix} u[1] & 0 \\ h 1_{(A,1,0)} & v \end{pmatrix} \\
 &= \begin{pmatrix} 1_A[1] & 0 \\ 0 & 1_B \end{pmatrix} + \begin{pmatrix} (-d\tilde{u})[1] & 0 \\ (d\tilde{h} + f \tilde{u} + \tilde{v}f)1_{(A,1,0)} & d\tilde{v} \end{pmatrix} \\
& = 1_{\cone(f)} + d\begin{pmatrix} \tilde{u}[1] & 0 \\ \tilde{h}1_{(A,1,0)} & \tilde{v} \end{pmatrix}.
\end{align*}
and analogously
\begin{align*}
\begin{pmatrix} u[1] & 0 \\ h1_{(A,1,0)} & v \end{pmatrix} & \begin{pmatrix} u'[1] & 0 \\ (h_0 +z'_1) 1_{(A',1,0)} & v' \end{pmatrix} \\
 &= \begin{pmatrix} 1_{A'}[1] & 0 \\ 0 & 1_{B'} \end{pmatrix} + \begin{pmatrix} (-d\tilde{u}')[1] & 0 \\ (d\tilde{h}' + f' \tilde{u}' + \tilde{v}'f')1_{(A',1,0)} & d\tilde{v}' \end{pmatrix} \\
& = 1_{\cone(f')} + d\begin{pmatrix} \tilde{u}'[1] & 0 \\ \tilde{h}' 1_{(A',1,0)} & \tilde{v}' \end{pmatrix}.
\end{align*}
Hence, the proof is completed in the case of inverses in $H^0(\cat A)$.

The case where $u$ and $v$ have strict inverses (in $Z^0(\cat A)$) is easier. We may just set
\[
h' = v^{-1} h u^{-1}
\]
and a direct computation yields the result.
\end{proof}
\begin{remark} \label{remark:iso_cones_boundaries}
The proof of the above Lemma \ref{lemma:iso_cones} in the case of inverses in $H^0(\cat A)$ actually yields a more precise result, as follows. In the same setting and notations as above, we obtain closed degree $0$ morphisms
\[
w^r, w^l \colon \cone(f') \to \cone(f),
\]
which fit in the diagram
\[\begin{tikzcd}[ampersand replacement=\&]
	A' \& B' \& {\cone(f')} \& {A'[1]} \\
	{A} \& {B} \& {\cone(f)} \& {A[1],}
	\arrow["f'", from=1-1, to=1-2]
	\arrow["u'"', from=1-1, to=2-1]
	\arrow["{f}"', from=2-1, to=2-2]
	\arrow["v'", from=1-2, to=2-2]
	\arrow["j'", from=1-2, to=1-3]
	\arrow["j", from=2-2, to=2-3]
	\arrow["p'", from=1-3, to=1-4]
	\arrow["p", from=2-3, to=2-4]
	\arrow["{u'[1]}", from=1-4, to=2-4]
	\arrow["{w^l}"', shift right=1, from=1-3, to=2-3]
	\arrow["{w^r}", shift left=1, from=1-3, to=2-3]
\end{tikzcd}\]
where the middle and right squares are strictly commutative. Moreover, there exist degree $-1$ morphisms
\begin{align*}
    h^l \colon \cone(f) &\to \cone(f), \\
    h^r \colon \cone(f') & \to \cone(f'),
\end{align*}
such that
\begin{align*}
    w^l \circ w & = 1_{\cone(f)} + dh^l, \\
    w \circ w^r &= 1_{\cone(f')} + dh^r,
\end{align*}
and $h^l$ and $h^r$ fit (respectively) in the following diagrams:
\begin{align*}
    \begin{tikzcd}[ampersand replacement=\&]
	A \& B \& {\cone(f)} \& {A'[1]} \\
	{A} \& {B} \& {\cone(f)} \& {A[1],}
	\arrow["f'", from=1-1, to=1-2]
	\arrow["\tilde{u}"', from=1-1, to=2-1]
	\arrow["{f}"', from=2-1, to=2-2]
	\arrow["\tilde{v}", from=1-2, to=2-2]
	\arrow["j", from=1-2, to=1-3]
	\arrow["j", from=2-2, to=2-3]
	\arrow["p", from=1-3, to=1-4]
	\arrow["p", from=2-3, to=2-4]
	\arrow["{\tilde{u}[1]}", from=1-4, to=2-4]
	\arrow["{h^l}", from=1-3, to=2-3]
\end{tikzcd} \\
\begin{tikzcd}[ampersand replacement=\&]
	A' \& B' \& {\cone(f')} \& {A'[1]} \\
	{A'} \& {B'} \& {\cone(f')} \& {A'[1],}
	\arrow["f'", from=1-1, to=1-2]
	\arrow["\tilde{u}'"', from=1-1, to=2-1]
	\arrow["{f'}"', from=2-1, to=2-2]
	\arrow["\tilde{v}'", from=1-2, to=2-2]
	\arrow["j'", from=1-2, to=1-3]
	\arrow["j'", from=2-2, to=2-3]
	\arrow["p'", from=1-3, to=1-4]
	\arrow["p'", from=2-3, to=2-4]
	\arrow["{\tilde{u}[1]}", from=1-4, to=2-4]
	\arrow["{h^r}", from=1-3, to=2-3]
\end{tikzcd}
\end{align*}
where the middle and right squares are strictly commutative.
\end{remark}

\subsection{Formally adding zero objects} \label{subsec:addzeroobj}
Let $\cat A$ be a dg-category. We define a dg-category $\cat A_{\{0\}}$ as follows:
\begin{itemize}
    \item $\Ob(\cat A_{\{0\}}) = \Ob(\cat A) \coprod \{0\}$.
    \item Morphisms are described as follows:
    \begin{equation*}
        \cat A_{\{0\}}(A,B) = \begin{cases} \cat A(A,B) & \text{if $A,B \in \Ob(\cat A)$} \\
        0 & \text{if $A=0$ or $B=0$} \end{cases}
    \end{equation*}
    with the obvious compositions.
\end{itemize}

Clearly, $\cat A_{\{0\}}$ can be identified with the full dg-subcategory of $\compdg(\cat A)$ containing the image of the Yoneda embedding $\cat A \hookrightarrow \compdg(\cat A)$ and the zero dg-module
\[
0(A)= 0 \in \compdg(\basering k), \qquad A \in \cat A.
\]

If $u \colon \cat A \to \cat B$ is a dg-functor, there is an induced dg-functor
\[
u_{\{0\}} \colon \cat A_{\{0\}} \to \cat B_{\{0\}},
\]
defined by
\[
u_{\{0\}}(A) = \begin{cases} u(A) & \text{if $A \in \Ob(\cat A)$}, \\
 0 & \text{if $A=0$.} \end{cases}
\]
We get a functor
\begin{equation}
\begin{split}
    (-)_{\{0\}} \colon \kat{dgCat} & \to \kat{dgCat}, \\
    \cat A & \mapsto \cat A_{\{0\}}.
\end{split}
\end{equation}
The following result is straightforward.
\begin{proposition}
Let $\cat A$ be a dg-category. There is a fully faithful dg-functor
\[
\cat A \hookrightarrow \cat A_{\{0\}},
\]
which is natural in $\cat A \in \kat{dgCat}$.

Moreover:
\begin{itemize}
    \item If $u \colon \cat A \to \cat B$ is quasi-fully faithful (respectively a quasi-equivalence), the same is true for $u_{\{0\}} \colon \cat A_{\{0\}} \to \cat B_{\{0\}}$.
    \item If $\cat A$ is such that $H^0(\cat A)$ has zero objects, the dg-functor $\cat A \hookrightarrow \cat A_{\{0\}}$ is a quasi-equivalence.
\end{itemize}
\end{proposition}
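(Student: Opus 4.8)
The plan is to check each assertion directly against the definition of $\cat A_{\{0\}}$, since the content is entirely bookkeeping once the definitions are unwound. First I would fix the dg-functor $\iota_{\cat A}\colon\cat A\hookrightarrow\cat A_{\{0\}}$: it is the identity on $\Ob(\cat A)\subseteq\Ob(\cat A_{\{0\}})$ and the identity on each Hom-complex, which makes sense because $\cat A_{\{0\}}(A,B)=\cat A(A,B)$ for $A,B\in\Ob(\cat A)$. In particular $\iota_{\cat A}$ is an equality, hence an isomorphism, on every Hom-complex, so it is fully faithful. Naturality amounts to observing that for a dg-functor $u\colon\cat A\to\cat B$ the square with horizontal arrows $\iota_{\cat A},\iota_{\cat B}$ and vertical arrows $u,u_{\{0\}}$ commutes strictly: this is immediate from the definition of $u_{\{0\}}$, which restricted to $\Ob(\cat A)$ and to the Hom-complexes between objects of $\cat A$ is just $u$.

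For the first bullet, recall that $u$ is quasi-fully faithful precisely when each chain map $\cat A(A,B)\to\cat B(uA,uB)$ is a quasi-isomorphism, and a quasi-equivalence when in addition $H^0(u)$ is essentially surjective. Given $X,Y\in\Ob(\cat A_{\{0\}})$, the map $\cat A_{\{0\}}(X,Y)\to\cat B_{\{0\}}(u_{\{0\}}X,u_{\{0\}}Y)$ is either the chain map $\cat A(X,Y)\to\cat B(uX,uY)$ attached to $u$, when $X,Y\in\Ob(\cat A)$, hence a quasi-isomorphism; or a map between zero complexes, when $X=0$ or $Y=0$, hence trivially an isomorphism. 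So $u_{\{0\}}$ is quasi-fully faithful. If moreover $H^0(u)$ is essentially surjective, then so is $H^0(u_{\{0\}})$: an object of $\cat A_{\{0\}}$ other than $0$ is an object of $\cat A$, whose image is isomorphic in $H^0(\cat B)\subseteq H^0(\cat B_{\{0\}})$ to some $u(A)=u_{\{0\}}(A)$, while $0=u_{\{0\}}(0)$ is hit on the nose. Hence $u_{\{0\}}$ is a quasi-equivalence.

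For the second bullet, $\iota_{\cat A}$ is fully faithful, hence quasi-fully faithful, so it remains only to see that $H^0(\iota_{\cat A})$ is essentially surjective. The unique object of $H^0(\cat A_{\{0\}})$ possibly not in the image is $0$, and I claim $0\cong\iota_{\cat A}(Z)$ in $H^0(\cat A_{\{0\}})$, where $Z\in\cat A$ is chosen so that its image in $H^0(\cat A)$ is a zero object (which exists by hypothesis). Indeed $\cat A_{\{0\}}(Z,0)$ and $\cat A_{\{0\}}(0,Z)$ vanish, so there is a unique morphism in each direction; the composite $Z\to 0\to Z$ is the zero element of $\cat A_{\{0\}}(Z,Z)=\cat A(Z,Z)$, whose class in $H^0(\cat A)(Z,Z)$ agrees with $[\mathrm{id}_Z]$ because $H^0(\cat A)(Z,Z)=0$; and the composite $0\to Z\to 0$ is the zero element of $\cat A_{\{0\}}(0,0)=0$, which is $\mathrm{id}_0$. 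So these are mutually inverse in $H^0(\cat A_{\{0\}})$, proving essential surjectivity and hence that $\iota_{\cat A}$ is a quasi-equivalence. There is no genuine obstacle here; the only point requiring more than a glance is this last isomorphism, where one uses that the identity of a zero object of $H^0(\cat A)$ coincides with its zero endomorphism — together with the trivial observation that $0$ is a strict zero object of $\cat A_{\{0\}}$, so that all the composites above are well defined.
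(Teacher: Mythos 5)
Your proof is correct and is exactly the direct verification that the paper declares ``straightforward'' and omits: full faithfulness and naturality are immediate from the definition of $\cat A_{\{0\}}$, the quasi-fully-faithful/quasi-equivalence claims reduce to the two cases (both objects in $\cat A$, or one of them equal to $0$), and the only point with any content --- that a cohomological zero object $Z$ of $H^0(\cat A)$ becomes isomorphic to the adjoined strict zero object in $H^0(\cat A_{\{0\}})$ because $[1_Z]=[0]$ --- is handled correctly.
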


\subsection{Adjoining strict direct sums or products} \label{subsec:addsumprod}
Let $\cat A$ be a dg-category. If $\cat A$ has (finite or infinite) \emph{cohomological} direct sums or products, we would like to replace it with a quasi-equivalent dg-category which has \emph{strict} direct sums or products.

In the case of finite direct sums (which are the same as finite products) this is not too difficult:
\begin{lemma} \label{lemma:dgcat_strictadditive_closure}
Let $\cat A$ be a dg-category. We define the dg-category $\cat A^\oplus$ as the closure of $\cat A$ in $\compdg(\cat A)$ under finite direct sums (including zero objects), namely, the full dg-subcategory of $\compdg(\cat A)$ whose objects are finite direct sums of representables $\cat A(-,A)$. Clearly, $\cat A^\oplus$ has strict direct sums and zero objects.

If $H^0(\cat A)$ is additive, then the inclusion dg-functor
\[
\cat A \hookrightarrow \cat A^\oplus
\]
is a quasi-equivalence.
\end{lemma}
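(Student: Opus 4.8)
The plan is to unwind the two conditions that make the inclusion $\cat A \hookrightarrow \cat A^\oplus$ a quasi-equivalence — quasi-full-faithfulness and essential surjectivity on $H^0$ — and to observe that the first is automatic while the second is precisely where additivity of $H^0(\cat A)$ enters.

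First I would dispose of quasi-full-faithfulness. By construction $\cat A^\oplus$ is a \emph{full} dg-subcategory of $\compdg(\cat A)$ containing the representables, so on objects the inclusion is $A \mapsto \cat A(-,A)$ and on morphism complexes it is the map $\cat A(A,B) \to \compdg(\cat A)(\cat A(-,A),\cat A(-,B))$, which is an isomorphism of complexes by the dg-Yoneda Lemma. Hence the inclusion is in fact strictly fully faithful, a fortiori quasi-fully-faithful. It therefore remains only to show that $H^0(\cat A) \to H^0(\cat A^\oplus)$ is essentially surjective, i.e.\ that every object of $\cat A^\oplus$ is isomorphic in $H^0(\cat A^\oplus)$ to $\cat A(-,A)$ for some $A \in \cat A$.

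For this, fix an object of $\cat A^\oplus$; by definition it has the form $M = \bigoplus_{i=1}^{n} \cat A(-,A_i)$ with $A_1,\dots,A_n \in \cat A$, the case $n=0$ being the zero dg-module. Since $H^0(\cat A)$ is additive, I may choose an object $A \in \cat A$ together with classes of closed degree-$0$ morphisms $[\iota_i]\colon A_i \to A$ and $[\pi_i]\colon A \to A_i$ exhibiting $A$ as the biproduct $\bigoplus_{i=1}^{n} A_i$ in $H^0(\cat A)$ (for $n=0$, take $A$ to be a zero object of $H^0(\cat A)$). Now apply the functor $H^0(h_{\cat A})\colon H^0(\cat A) \to H^0(\compdg(\cat A))$. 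Because $H^0$ of any dg-functor is $\basering k$-linear, it preserves finite biproducts, so the images of $[\iota_i]$ and $[\pi_i]$ exhibit $\cat A(-,A)$ as a biproduct of the $\cat A(-,A_i)$ in $H^0(\compdg(\cat A))$. On the other hand, the \emph{strict} direct sum $M=\bigoplus_i \cat A(-,A_i)$ in $\compdg(\cat A)$ is a biproduct there, hence also a biproduct of the $\cat A(-,A_i)$ in the additive category $H^0(\compdg(\cat A))$. Biproducts being unique up to canonical isomorphism, we obtain $\cat A(-,A) \cong M$ in $H^0(\compdg(\cat A))$, and since $\cat A^\oplus$ is a full dg-subcategory this isomorphism already lives in $H^0(\cat A^\oplus)$. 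This proves essential surjectivity and finishes the argument.

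\textbf{On the main obstacle.} There is no serious obstacle here; the lemma is soft. The only point that needs care is that the isomorphism $\cat A(-,A)\cong M$ must be produced at the level of $H^0$ of dg-modules (equivalently in $\dercomp(\cat A)$), not merely asserted abstractly — and this is exactly what $\basering k$-linearity of $H^0$ of dg-functors delivers, upgrading the biproduct data from $H^0(\cat A)$ to $H^0(\compdg(\cat A))$. If one prefers an explicit model, one can instead lift $[\iota_i],[\pi_i]$ to closed degree-$0$ morphisms in $\cat A$, apply Yoneda, and check by hand that $(\cat A(-,\iota_i))_i$ and $(\cat A(-,\pi_i))_i$ assemble into mutually homotopy-inverse closed degree-$0$ maps between $\cat A(-,A)$ and $M$, the homotopies being the Yoneda images of the homotopies witnessing the biproduct relations in $\cat A$. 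Alternatively, since finite direct sums of representables are h-projective one has $\cat A^\oplus \subseteq \hproj(\cat A)$, so $H^0(\cat A^\oplus)\hookrightarrow \dercomp(\cat A)$ is fully faithful, and it suffices to note that the derived Yoneda embedding $H^0(\cat A)\hookrightarrow\dercomp(\cat A)$ preserves finite direct sums.
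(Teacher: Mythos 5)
Your proof is correct and follows essentially the same route as the paper: full faithfulness is immediate from the dg-Yoneda Lemma, and essential surjectivity is obtained by choosing a biproduct $A$ of the $A_i$ in $H^0(\cat A)$ and transporting it along the Yoneda embedding to an isomorphism $\cat A(-,A)\cong\bigoplus_i\cat A(-,A_i)$ in $H^0(\cat A^\oplus)$, including the zero-object case. The only cosmetic difference is that the paper writes out the mutually inverse morphisms $({p_i}_*)$ and $({j_i}_*)$ explicitly rather than appealing to preservation and uniqueness of biproducts, which is exactly the variant you sketch at the end.
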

\begin{proof}
We just need to show essential surjectivity of $H^0(\cat A) \to H^0(\cat A^\oplus)$. Let
\[
M = \cat A(-,A_1) \oplus \cat A(-,A_2)
\]
be a binary product in $\cat A^\oplus$. We are going to show that this is isomorphic to some representable $\cat A(-,A)$ in $H^0(\cat A^\oplus)$; the case of any finite direct sum will be obtained by a straightforward induction. So, let $A$ be a direct sum of $A_1$ and $A_2$ in $H^0(\cat A)$. This is actually a biproduct, so we have degree $0$ morphisms
\[
j_i \colon A_i \to A, \quad p_i \colon A \to A_i, \qquad i=1,2,
\]
such that
\[
[p_i j_i] =[1_{A_i}], \quad [i_1p_1+i_2p_2] =[1_A], \quad [p_2 j_1]=[0], \quad [p_1 j_2]=[0].
\]
We can use the $p_i$ and $j_i$ to define morphisms
\[
\begin{psmallmatrix}
{j_1}_* \\ {j_2}_* 
\end{psmallmatrix} \colon \cat A(-,A) \to \cat A(-,A_1) \oplus \cat A(-,A_2)
\]
and
\[
({p_1}_*, {p_2}_*) \colon \cat A(-,A_1) \oplus \cat A(-,A_2) \to \cat A(-,A).
\]
It is then immediate to see that these morphisms are inverse to each other when viewed in $H^0(\cat A^\oplus)$.

To conclude, we show that if $Z$ is a zero object in $H^0(\cat A)$, then $\cat A(-,Z) \cong 0$ in $H^0(\cat A^\oplus)$. Indeed, the identity morphism $1_Z \colon Z \to Z$ is such that $[1_Z]=[0]$ in $H^0(\cat A)$. This implies that the same is true for
\[
[1_{\cat A(-,Z)}] \colon \cat A(-,Z) \to \cat A(-,Z).
\]
in $H^0(\cat A^\oplus)$. This means that $\cat A(-,Z)$ is a zero object in $H^0(\cat A^\oplus)$.
\end{proof}

Adjoining \emph{infinite} strict direct sums or products is a little trickier. We first deal with products; the case of coproducts will be dual and easily dealt with later afterwards. The goal is to prove the following:
\begin{lemma} \label{lemma:homotopyproducts_closure}
Let $\cat A$ be a dg-category such that the graded cohomology $H^*(\cat A)$ has direct products indexed by sets of cardinality $\leq \kappa$, where $\kappa$ is an infinite regular cardinal. Then, we can find a dg-category $\cat A^\Pi$ which has strict products indexed by sets of cardinality $\leq \kappa$, which is quasi-equivalent to $\cat A$.

Dually, let $\cat A$ be a dg-category such that the graded cohomology $H^*(\cat A)$ has direct sums indexed by sets of cardinality $\leq \kappa$, where $\kappa$ is an infinite regular cardinal. Then, we can find a dg-category $\cat A^\amalg$ which has strict products indexed by sets of of cardinality $\leq \kappa$, which is quasi-equivalent to $\cat A$.
\end{lemma}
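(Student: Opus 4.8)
The plan is to realise the required strict products inside the dg-category $\operatorname{h-inj}(\cat A)$ of h-injective right $\cat A$-dg-modules. The two facts that make this work are: (i) $\compdg(\cat A)$ has strict products, computed degreewise, and a product of h-injective dg-modules is again h-injective, so $\operatorname{h-inj}(\cat A)$ has strict products indexed by arbitrary sets; (ii) a quasi-isomorphism between h-injective dg-modules is a homotopy equivalence, so $H^0(\operatorname{h-inj}(\cat A))\cong\dercomp(\cat A)$. I would fix a functorial (dg-)h-injective resolution $\mathbf i\colon\compdg(\cat A)\to\operatorname{h-inj}(\cat A)$ together with a natural quasi-isomorphism $\eta\colon\mathrm{id}\Rightarrow\mathbf i$ (such resolutions are standard; see e.g. \cite{lunts-schnurer-smoothness-equivariant}), set $\iota:=\mathbf i\circ h_{\cat A}\colon\cat A\to\operatorname{h-inj}(\cat A)$, and define $\cat A^\Pi$ to be the smallest full dg-subcategory of $\operatorname{h-inj}(\cat A)$ containing the image of $\iota$ and closed under products indexed by sets of cardinality $\leq\kappa$. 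Since $\kappa$ is infinite, $\kappa\cdot\kappa=\kappa$, so the objects of $\cat A^\Pi$ are exactly the products $\prod_{i\in I}\iota(A_i)$ (computed in $\compdg(\cat A)$) with $|I|\leq\kappa$ and $A_i\in\cat A$, including the empty product $\zeroobj$. By construction $\cat A^\Pi$ has strict $\leq\kappa$-indexed products and $\iota$ corestricts to a dg-functor $\cat A\to\cat A^\Pi$; it then remains to check that this dg-functor is a quasi-equivalence.

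Quasi-full faithfulness of $\iota$ would be formal: since $h_A$ is h-projective, $\iota(B)=\mathbf i h_B$ is h-injective and $\eta$ is a pointwise quasi-isomorphism, applying two-out-of-three to the naturality square of $\eta$ at a morphism $h_A\to h_B$ shows that $\cat A(A,B)=\compdg(\cat A)(h_A,h_B)\to\operatorname{h-inj}(\cat A)(\iota A,\iota B)$ is a quasi-isomorphism. For essential surjectivity on $H^0$ I would argue as follows. Given a family $(A_i)_{i\in I}$ with $|I|\leq\kappa$, let $A$ be a homotopy product of the $A_i$ in $\cat A$ — this exists by hypothesis — with structure morphisms $p_i\colon A\to A_i$. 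These induce a closed degree-$0$ morphism $\phi=(p_{i\,*})_i\colon h_A\to\prod_i h_{A_i}$ in $\compdg(\cat A)$; evaluating at any $B\in\cat A$ and passing to cohomology, and using that cohomology commutes with products of complexes, $H^*(\phi)$ at $B$ becomes the comparison isomorphism for the homotopy product $A$, so $\phi$ is a quasi-isomorphism. Hence $\iota(A)=\mathbf i h_A$ and $\prod_i\iota(A_i)=\prod_i\mathbf i h_{A_i}$, which are both h-injective, become isomorphic in $\dercomp(\cat A)$ (via $\eta$, $\phi$, and the fact that a product of quasi-isomorphisms is a quasi-isomorphism — again because cohomology commutes with products), and therefore isomorphic in $H^0(\cat A^\Pi)$. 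Since these products exhaust the objects of $\cat A^\Pi$, this gives essential surjectivity, and $\iota\colon\cat A\to\cat A^\Pi$ is a quasi-equivalence. (The empty product is handled by noting that the empty homotopy product is a homotopy terminal object of $\cat A$ whose resolution is a zero object of $H^0(\operatorname{h-inj}(\cat A))$, or by first replacing $\cat A$ with $\cat A_{\{0\}}$.)

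The dual statement I would obtain by applying the above to $\opp{\cat A}$ and setting $\cat A^\amalg:=\opp{\bigl((\opp{\cat A})^\Pi\bigr)}$, which then has strict coproducts indexed by sets of cardinality $\leq\kappa$ and is quasi-equivalent to $\cat A$; equivalently, one repeats the argument with h-injectives replaced by h-projectives — which are closed under coproducts — and products by coproducts. I expect the essential-surjectivity step to be the only real obstacle: one must know that a homotopy product in $\cat A$ turns into an honest degreewise product after h-injective resolution, and the whole point is the interaction of ``cohomology of a product of complexes is the product of the cohomologies'' with ``a quasi-isomorphism between h-injective dg-modules is a homotopy equivalence''. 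Working with the Yoneda embedding inside $\compdg(\cat A)$ alone does not suffice, because there the natural map $h_A\to\prod_i h_{A_i}$ is merely a quasi-isomorphism and need not be a homotopy equivalence, so the naive closure of the representables under products fails to be quasi-equivalent to $\cat A$.
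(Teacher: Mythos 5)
Your proposal is correct and follows essentially the same route as the paper: realise the strict products inside $\operatorname{h-inj}(\cat A)$, using that products of h-injectives are h-injective and that quasi-isomorphisms between h-injectives are invertible in $H^0$, with the hypothesis on $H^*(\cat A)$ supplying the quasi-isomorphism $\cat A(-,A)\to\prod_i\cat A(-,A_i)$ that becomes an isomorphism after resolution. The only cosmetic difference is that you package the comparison as an honest dg-functor via a functorial resolution, whereas the paper uses a non-functorial resolution and an invertible quasi-functor between $\cat A$ and the subcategory of resolved representables; your closing remark about why the naive closure of representables in $\compdg(\cat A)$ fails matches the paper's own observation.
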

\begin{proof}
We prove only the first claim, the other being obtained by duality, namely, by replacing $\cat A$ with $\opp{\cat A}$. Let $\{A_i : i \in I\}$ be a family of objects in $\cat A$ indexed by a set $I$ of cardinality $\leq \kappa$. Let $A=\prod_i A_i$ be a product of the $A_i$ in $H^*(\cat A)$. This implies that the morphism
\[
\cat A(-,A) \to \prod_i \cat A(-,A_i)
\]
is a quasi-isomorphism in $\compdg(\cat A)$. Naively, we would take the closure of $\cat A$ in $\compdg(\cat A)$ under such products, but this will not work: quasi-isomorphisms are not isomorphisms in $H^0(\compdg(\cat A))$. Hence, we need to take resolutions. \emph{h-injective resolutions} (cf. \cite[\S 4.1.1]{lunts-schnurer-smoothness-equivariant}) are the ones we need, because the product of h-injective dg-modules is again h-injective. Technically, we argue as follows: consider the full dg-subcategory $\widetilde{\cat A}$ of $\compdg(\cat A)$ of dg-modules of the form $R \cat A(-,A)$, where in general $M \to R(M)$ is an h-injective resolution of any dg-module $M \in \compdg(\cat A)$. The inclusion
\[
\widetilde{\cat A} \hookrightarrow \compdg(\cat A)
\]
is actually a quasi-functor $\widetilde{\cat A} \to \cat A$. Indeed, every $R\cat A(-,A)$ is quasi-isomorphic to $\cat A(-,A)$, being an h-injective resolution. Moreover, this quasi-functor has an inverse in graded cohomology:
\begin{align*}
H^*(\cat A) & \to H^*(\widetilde{\cat A}), \\
A & \mapsto R \cat A(-,A).
\end{align*}
We may conclude that it is an invertible quasi-functor, hence $\widetilde{\cat A}$ is indeed quasi-equivalent to $\cat A$.

Now, we may define $\cat A^\Pi$ as the smallest full dg-subcategory of $\operatorname{h-inj}(\cat A)$ which contains $\widetilde{\cat A}$ and it is closed under strict products (indexed by sets of cardinality $\leq \kappa$). It is straightforward to check that $\cat A^\Pi$ is the full dg-subcategory of $\operatorname{h-inj}(\cat A)$ spanned by
\[
\left\{ \prod_{i \in I} \widetilde{A}_i : \widetilde{A}_i \in \widetilde{\cat A},\ |I| \leq \kappa \right\}.
\]

If $H^*(\cat A)$ has such indexed products, we can prove that the inclusion
\[
\widetilde{\cat A} \hookrightarrow \cat A^\Pi
\]
is a quasi-equivalence. Indeed, let $\{A_i : i \in I\}$ be a family of objects in $\cat A$ (with $|I| \leq \kappa$), and let $A$ be a product of the $A_i$ in $H^*(\cat A)$. Taking h-injective resolutions, we get a morphism
\[
R\cat A(-,A) \to \prod_{i \in I} R\cat A(-,A_i)
\]
in $\operatorname{h-inj}(\cat A)$. We know that it is a quasi-isomorphism; but quasi-isomorphisms between h-injective dg-modules are actually isomorphisms in $H^0(\operatorname{h-inj}(\cat A))$. We conclude that
\[
H^0(\widetilde{\cat A}) \to H^0(\cat A^\Pi)
\]
is essentially surjective, hence $\widetilde{\cat A} \hookrightarrow \cat A^\Pi$ is indeed a quasi-equivalence. We conclude that our original $\cat A$ is quasi-equivalent to $\cat A^\Pi$, the latter having strict products. 
\end{proof}
\begin{remark}
By construction, both $\cat A^\Pi$ and $\cat A^\amalg$ have strict zero objects (obtained as empty products or coproducts) and strict finite direct sums.
\end{remark}

\subsection{Homotopy (co)limits} \label{subsection:hocolim}
We will discuss the definition and some properties of \emph{sequential} homotopy (co)limits in dg-categories, which we will need in this work. Such homotopy (co)limits are understood as ``Milnor (co)limits'' using mapping telescopes, as follows.
\begin{definition}
Let $\cat A$ be a dg-category. Let $(A_{n+1} \xrightarrow{a_{n+1,n}} A_n)_{n \geq 0}$ be a sequence of closed degree $0$ morphisms in $\cat A$. A \emph{homotopy limit} of this sequence is an object $\holim_n A_n \in \cat A$ together with a quasi-isomorphism of right $\cat A$-dg-modules:
\[
\cat A(-,\holim_n A_n) \xrightarrow{\sim} \holim_n \cat A(-,A_n),
\]
where $\holim_n \cat A(-,A_n)$ sits in the following rotated pretriangle of right $\cat A$-dg-modules (by ``rotated pretriangle'' we just mean ``pretriangle in the opposite category''):
\[
\holim_n \cat A(-,A_n) \to \prod_{n \geq 0} \cat A(-,A_n) \xrightarrow{1-\nu} \prod_{n \geq 0} \cat A(-,A_n).
\]
The morphism $\nu$ is induced by
\[
\prod_{n \geq 0} \cat A(-,A_n) \xrightarrow{\operatorname{pr_{n+1}}} \cat A(-,A_{n+1}) \xrightarrow{(a_{n+1,n})_*} \cat A(-,A_n). 
\]

\emph{Homotopy colimits} are understood as homotopy limits in the opposite dg-category $\opp{\cat A}$.
\end{definition}

Let $\cat A$ be a pretriangulated dg-category, and assume moreover that $H^0(\cat A)$ has countable products. Since $\cat A$ is pretriangulated, this implies that $H^*(\cat A)$ has countable products, and moreover that for any countable family $\{A_n : n \geq 0\}$ we have a quasi-isomorphism of right $\cat A$-dg-modules:
\[
\cat A(-,\prod_{n \geq 0} A_n) \xrightarrow{\approx} \prod_{n \geq 0} \cat A(-,A_n),
\]
namely, $\cat A$ has \emph{homotopy products}.

In this setup, let $(A_{n+1} \xrightarrow{a_{n+1,n}} A_n)_{n \geq 0}$ be a sequence of closed degree $0$ morphisms in $\cat A$. We can take the following distinguished triangle in $H^0(\cat A)$:
\begin{equation}
\holim_n A_n \to \prod_{n \geq 0} A_n \xrightarrow{1-\nu} \prod_{n \geq 0} A_n,
\end{equation}
where $\nu$ is the morphism in $H^0(\cat A)$ induced by
\[
\prod_{n \geq 0} A_n \xrightarrow{pr_{n+1}} A_{n+1} \xrightarrow{a_{n+1,n}} A_n.
\]
We obtain the following commutative diagram in $\dercomp(\cat A)$:
\[\begin{tikzcd}[ampersand replacement=\&]
	{\cat A(-,\holim_n A_n)} \& {\cat A(-,\prod_{n \geq 0} A_n)} \& {\cat A(-,\prod_{n \geq 0} A_n)} \\
	{\holim_n \cat A(-,A_n)} \& {\prod_{n \geq 0}\cat A(-,A_n)} \& {\prod_{n \geq 0} \cat A(-,A_n).}
	\arrow[from=1-1, to=1-2]
	\arrow["{(1-\nu)_*}", from=1-2, to=1-3]
	\arrow["\approx"', from=1-2, to=2-2]
	\arrow["{1-\nu}", from=2-2, to=2-3]
	\arrow["\approx"', from=1-3, to=2-3]
	\arrow["\approx"', from=1-1, to=2-1]
	\arrow[from=2-1, to=2-2]
\end{tikzcd}\]
The rows are distinguished triangles, and the vertical morphisms are all quasi-isomorphisms. We abused notation and wrote $\nu$ for both the morphism $\prod_{n \geq 0} A_n \to \prod_{n \geq 0} A_n$ and the morphism $\prod_{n \geq 0} \cat A(-,A_n) \to \prod_{n \geq 0} \cat A(-,A_n)$. We conclude that the object $\holim_n A_n \in \cat A$, together with the quasi-isomorphism
\[
\cat A(-,\holim_n A_n) \xrightarrow{\approx} \holim_n \cat A(-,A_n),
\]
is a homotopy limit of our given sequence.

Dually, assume that $\cat A$ is pretriangulated and $H^0(\cat A)$ has countable coproducts (which we denote as direct sums). Let $(A_n \xrightarrow{a_{n,n+1}} A_{n+1})_{n \geq 0}$ be a sequence of closed degree morphisms in $\cat A$. We can take the following distinguished triangle in $H^0(\cat A)$:
\begin{equation}
 \bigoplus_{n \geq 0} A_n \xrightarrow{1-\mu} \bigoplus_{n \geq 0} A_n \to \hocolim_n A_n,
\end{equation}
where $\mu$ is the morphism in $H^0(\cat A)$ induced by
\[
A_n \xrightarrow{a_{n,n+1}} A_{n+1} \xrightarrow{\operatorname{incl}_{n+1}} \bigoplus_{n \geq 0} A_n.
\]
Reasoning as above, we find a quasi-isomorphism of left $\cat A$-dg-modules:
\[
\cat A(\hocolim_n A_n,-) \xrightarrow{\approx} \cat \holim_n \cat A(A_n,-),
\]
which exhibits $\hocolim_n A_n$ as the homotopy colimit of the given sequence.

\subsubsection*{Comparison with ordinary limits and colimits}
Let $\cat A$ be a dg-category, and let $(A_{n+1} \xrightarrow{a_{n+1,n}} A_n)_{n \geq 0}$ be a sequence of closed degree $0$ morphisms in $\cat A$. We may define the \emph{limit} of this sequence as an object $\varprojlim_n A_n \in \cat A$ together with an isomorphism of right $\cat A$-dg-modules
\[
\cat A(-,\varprojlim_n A_n) \xrightarrow{\sim} \varprojlim_n \cat A(-,A_n).
\]
We may describe $\varprojlim_n \cat A(-,A_n)$ as the following kernel taken in $\compdg(\cat A)$:
\[
0 \to \varprojlim_n \cat A(-,A_n) \to \prod_{n \geq 0} \cat A(-,A_n) \xrightarrow{1-\nu} \prod_{n \geq 0} \cat A(-,A_n).
\]
This sequence is not in general exact, namely, $1-\nu$ is not in general surjective. It will be so under suitable additional assumptions, such as in the following proposition:
\begin{proposition} \label{proposition:lim1_vanishing}
In the above setup, assume that the morphism of complexes
\[
(a_{n+1,n})_* \colon \cat A(Z,A_{n+1}) \to \cat A(Z,A_n)
\]
is surjective for all $Z \in \cat A$. Then, the morphism $1-\nu$ is surjective and we have an exact sequence:
\[
0 \to \varprojlim_n \cat A(-,A_n) \to \prod_{n \geq 0} \cat A(-,A_n) \xrightarrow{1-\nu} \prod_{n \geq 0} \cat A(-,A_n) \to 0.
\]
\end{proposition}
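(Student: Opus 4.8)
The plan is to notice that there is essentially nothing to prove about the left-hand end of the sequence: by the very definition recalled just above, $\varprojlim_n \cat A(-,A_n)$ \emph{is} the kernel $\Kern(1-\nu)$ taken in $\compdg(\cat A)$, so exactness at the first two spots is automatic. Thus the only real content is that, under the stated hypothesis, $1-\nu$ is an epimorphism in $\compdg(\cat A)$, i.e.\ that the sequence is exact on the right.

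First I would reduce this epimorphism statement to a statement about towers of $\basering k$-modules. Products, kernels and epimorphisms in $\compdg(\cat A) = \Fundg(\opp{\cat A}, \compdg(\basering k))$ are all computed objectwise, and an epimorphism of complexes of $\basering k$-modules is precisely a morphism that is surjective in every degree. Hence it suffices to fix $Z \in \cat A$ and an integer $p$ and to show that the $\basering k$-linear map $\prod_{n\geq 0}\cat A(Z,A_n)^p \to \prod_{n\geq 0}\cat A(Z,A_n)^p$ sending $(x_n)_n$ to $\bigl(x_n - (a_{n+1,n})_*(x_{n+1})\bigr)_n$ is surjective, knowing by hypothesis that each transition map $(a_{n+1,n})_* \colon \cat A(Z,A_{n+1})^p \to \cat A(Z,A_n)^p$ is surjective.

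At that point I would invoke the classical vanishing-of-$\varprojlim^1$ argument for a tower with surjective transition maps: given a target sequence $(y_n)_n$, construct a preimage $(x_n)_n$ recursively, choosing $x_0$ arbitrarily (say $x_0 := 0$) and, having produced $x_n$, using surjectivity of $(a_{n+1,n})_*$ to pick $x_{n+1}$ with $(a_{n+1,n})_*(x_{n+1}) = x_n - y_n$; then $x_n - (a_{n+1,n})_*(x_{n+1}) = y_n$ for every $n$, as desired, so $1-\nu$ is surjective and the short exact sequence follows.

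I do not expect a genuine obstacle here — the argument is routine. The two points worth a moment's care are that we are only asked to produce a section of the underlying map of $\basering k$-modules, not a morphism of dg-modules, so the non-canonical recursive choices cause no trouble; and that the reduction in the second paragraph is legitimate precisely because limits and (co)kernels in the dg-functor category $\compdg(\cat A)$ are formed pointwise. One could also remark that once $1-\nu$ is epi, the homotopy limit of the tower coincides with its ordinary limit, but the proposition as stated does not require this.
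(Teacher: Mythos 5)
Your proposal is correct and follows essentially the same route as the paper: reduce surjectivity of $1-\nu$ to degreewise surjectivity of the maps $\prod_{n}\cat A(Z,A_n)^p \to \prod_{n}\cat A(Z,A_n)^p$ and then apply the standard tower-with-surjective-transition-maps argument, which the paper simply cites from Weibel (Lemma 3.53) while you spell out the recursion explicitly.
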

\begin{proof}
Surjectivity of $1-\nu$ as a morphism of dg-modules is equivalent to surjectivity of its components
\[
(1-\nu)^p \colon \prod_{n \geq 0} \cat A(Z,A_n)^p \to \prod_{n \geq 0} \cat A(Z,A_n)^p
\]
as homomorphisms of abelian groups. Then, the result is well known (see, for instance, \cite[Lemma 3.53]{weibel-homological}).
\end{proof}
\begin{corollary} \label{corollary:lim1_vanishing}
Assume the setup of the above Proposition \ref{proposition:lim1_vanishing}, in particular that
\[
(a_{n+1,n})_* \colon \cat A(Z,A_{n+1}) \to \cat A(Z,A_n)
\]
is surjective for all $Z \in \cat A$. Then, there is an isomorphism
\[
\varprojlim_n \cat A(-,A_n) \xrightarrow{\approx} \holim_n \cat A(-,A_n)
\]
in the derived category $\dercomp(\cat A)$.

Moreover, assume that the limit of $(A_{n+1} \xrightarrow{a_{n+1,n}} A_n)_{n \geq 0}$ exists in $\cat A$. Then, the object $\varprojlim_n A_n \in \cat A$, together with the quasi-isomorphism
\[
\cat A(-,\varprojlim_n A_n) \xrightarrow{\sim} \varprojlim_n \cat A(-,A_n) \xrightarrow{\approx} \holim_n \cat A(-,A_n)
\]
is a homotopy limit of the given sequence. In particular, we have an isomorphism
\[
\varprojlim_n A_n \cong \holim_n A_n
\]
in $H^0(\cat A)$.
\end{corollary}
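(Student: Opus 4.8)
The plan is to read off both assertions from the short exact sequence of right $\cat A$-dg-modules furnished by Proposition \ref{proposition:lim1_vanishing}. Recall that $\varprojlim_n\cat A(-,A_n)$ is, by definition, the kernel $\Kern(1-\nu)$ of the endomorphism $1-\nu$ of $\prod_{n\geq 0}\cat A(-,A_n)$, whereas $\holim_n\cat A(-,A_n)$ is the shifted cone $\cone(1-\nu)[-1]$, formed in the strongly pretriangulated dg-category $\compdg(\cat A)$. The inclusion of the kernel is a strict chain map
\[
\varprojlim_n\cat A(-,A_n)=\Kern(1-\nu)\longrightarrow\cone(1-\nu)[-1]=\holim_n\cat A(-,A_n)
\]
(in the biproduct description of the cone it sends $x$ to $(x,0)$, which is closed precisely because $x\in\Kern(1-\nu)$). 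I would then prove that this map is a quasi-isomorphism: this is the standard fact that for a \emph{degreewise surjective} morphism of complexes $g$ the inclusion $\Kern(g)\hookrightarrow\cone(g)[-1]$ is a quasi-isomorphism, which one verifies by comparing the long exact cohomology sequence of $0\to\Kern(g)\to M\xrightarrow{g}N\to 0$ with the one coming from the rotated pretriangle $\cone(g)[-1]\to M\xrightarrow{g}N$ and applying the five lemma. The hypothesis that each $(a_{n+1,n})_*$ is surjective enters exactly here: by the proof of Proposition \ref{proposition:lim1_vanishing} it is what makes $1-\nu$ degreewise surjective. This yields the first claimed isomorphism $\varprojlim_n\cat A(-,A_n)\xrightarrow{\approx}\holim_n\cat A(-,A_n)$ in $\dercomp(\cat A)$.

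For the second part, suppose $\varprojlim_n A_n$ exists in $\cat A$; by definition this provides an isomorphism of right $\cat A$-dg-modules $\cat A(-,\varprojlim_n A_n)\xrightarrow{\sim}\varprojlim_n\cat A(-,A_n)$. Composing it with the quasi-isomorphism produced above gives a quasi-isomorphism
\[
\cat A(-,\varprojlim_n A_n)\xrightarrow{\sim}\varprojlim_n\cat A(-,A_n)\xrightarrow{\approx}\holim_n\cat A(-,A_n),
\]
which is precisely the defining datum exhibiting $\varprojlim_n A_n$, equipped with this structure map, as a homotopy limit of the sequence $(A_{n+1}\xrightarrow{a_{n+1,n}}A_n)_{n\geq 0}$.

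Finally, to obtain $\varprojlim_n A_n\cong\holim_n A_n$ in $H^0(\cat A)$ I would invoke uniqueness of homotopy limits: if two objects $L,L'$ both carry quasi-isomorphisms onto $\holim_n\cat A(-,A_n)$, then $\cat A(-,L)$ and $\cat A(-,L')$ are isomorphic in $\dercomp(\cat A)$, and since representable dg-modules are h-projective this isomorphism is realized by a homotopy equivalence of dg-modules, which by the (derived) Yoneda Lemma descends to an isomorphism $L\cong L'$ in $H^0(\cat A)$. I do not anticipate a genuine obstacle; the only point requiring a little care is the first one — checking that the \emph{natural} comparison map from the strict kernel to the homotopy kernel is the relevant one, and that it is degreewise surjectivity of $1-\nu$ (not merely the vanishing of a $\varprojlim^1$) that promotes it to a quasi-isomorphism — but this is routine homological algebra.
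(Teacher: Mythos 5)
Your proof is correct and takes essentially the same approach as the paper: the paper's own argument simply cites that the short exact sequence of Proposition \ref{proposition:lim1_vanishing} yields a distinguished triangle in $\dercomp(\cat A)$, which is precisely the kernel-versus-shifted-cone comparison you spell out via the five lemma. The additional details you supply (the explicit comparison map $\Kern(1-\nu)\hookrightarrow \cone(1-\nu)[-1]$ and the Yoneda/h-projectivity argument for the final isomorphism in $H^0(\cat A)$) are exactly the standard facts the paper leaves implicit.
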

\begin{proof}
This follows directly from Proposition \ref{proposition:lim1_vanishing}, since short exact sequences of complexes (and also of $\cat A$-dg-modules) yield distinguished triangles in the derived category.
\end{proof}
Applying the above arguments to the opposite dg-category $\opp{\cat A}$ yields analogue results about colimits and homotopy colimits. The details are left to the reader.

Corollary \ref{corollary:lim1_vanishing} is relevant, because it allows us (at least in good cases) to work with homotopy limits and colimits by using the ordinary limits and colimits, which are easier to deal with.

\subsection{Truncations of dg-categories} \label{subsection:truncations}
Let $V$ be a chain complex. The \emph{(left) truncation} $\tau_{\leq 0}V$ is defined as the chain complex such that:
\begin{align*}
    (\tau_{\leq 0} V)^i &= 0, \qquad i > 0, \\
    (\tau_{\leq 0} V)^0 &= Z^0(V), \\
    (\tau_{\leq 0} V)^i &= V^i, \qquad i < 0,
\end{align*}
with the induced differential. 

We notice that $H^i(\tau_{\leq 0}V)=H^i(V)$ for all $i \leq 0$ and $H^i(\tau_{\leq 0} V)=0$ for all $i > 0$. Moreover, there is natural (injective) chain map
\[
\tau_{\leq 0} V \to V.
\]
\begin{lemma} \label{lemma:truncation_exact}
The left truncation of complexes is compatible with direct sums and direct products. Namely, we have natural isomorphisms
\begin{align*}
    \tau_{\leq 0} \prod_i V_i & \xrightarrow{\sim} \prod_i \tau_{\leq 0} V_i, \\
    \bigoplus_i \tau_{\leq 0} V_i & \xrightarrow{\sim} \tau_{\leq 0} \bigoplus_i V_i,
\end{align*}
for any family $\{V_i : i \in I\}$ of chain complexes.
\end{lemma}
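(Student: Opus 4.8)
The plan is to check each of the two asserted isomorphisms degree by degree, since $\tau_{\leq 0}$ is visibly a functor (a chain map sends cycles to cycles, hence restricts in degree $0$, and is the identity construction in negative degrees) and therefore induces canonical comparison maps $\tau_{\leq 0}\prod_i V_i \to \prod_i \tau_{\leq 0} V_i$ (from the projections) and $\bigoplus_i \tau_{\leq 0} V_i \to \tau_{\leq 0}\bigoplus_i V_i$ (from the inclusions). Fix a family $\{V_i : i \in I\}$ of complexes. In degrees $i > 0$ both sides of each comparison vanish by the definition of the left truncation. In degrees $i < 0$ one has $(\tau_{\leq 0} W)^i = W^i$ for every complex $W$, so the claim reduces to the tautology that the underlying graded modules of $\prod_i V_i$ and $\bigoplus_i V_i$ are formed degreewise. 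Thus all the content sits in degree $0$.

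In degree $0$ we have $(\tau_{\leq 0} W)^0 = Z^0(W) = \Kern\bigl(d^0_W \colon W^0 \to W^1\bigr)$. For products: the differential of $\prod_i V_i$ in each degree is the product of the differentials $d_{V_i}$, and a kernel is a limit; since limits commute with limits, $Z^0(\prod_i V_i) \cong \prod_i Z^0(V_i) = \bigl(\prod_i \tau_{\leq 0} V_i\bigr)^0$, and one checks this identification is the one induced by the projections. For direct sums: the differential of $\bigoplus_i V_i$ is the direct sum of the $d_{V_i}$, and since the category of $\basering k$-modules satisfies AB4 (infinite direct sums are exact), the formation of kernels commutes with direct sums, giving $Z^0(\bigoplus_i V_i) \cong \bigoplus_i Z^0(V_i) = \bigl(\bigoplus_i \tau_{\leq 0} V_i\bigr)^0$, again compatibly with the inclusions. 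Assembling the three degree ranges, the comparison maps are isomorphisms in every degree, hence isomorphisms of complexes, and they are natural in the family because each degreewise identification is.

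There is essentially no obstacle here; the single point deserving a moment's care is the degree-$0$ direct-sum case, where one genuinely invokes exactness of infinite direct sums of $\basering k$-modules to commute $Z^0$ past $\bigoplus$ — in contrast to the product case, where the corresponding statement is purely formal (kernels are limits). I would state this explicitly so the reader sees why the two halves of the lemma, though symmetric in appearance, rest on slightly different inputs.
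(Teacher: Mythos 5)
Your proof is correct and is essentially an expanded version of the paper's own argument, which simply cites the exactness of direct sums and direct products of complexes; your degree-by-degree reduction to the degree-$0$ statement that $Z^0$ commutes with products (formally) and with direct sums (by AB4 for $\basering k$-modules) is exactly the content of that one-line appeal. The remark distinguishing the two cases is a nice touch but does not change the substance.
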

\begin{proof}
This follows from the fact that direct sums and direct products of complexes are exact.
\end{proof}
\begin{definition}
Let $\cat A$ be a dg-category. We define the \emph{truncation of $\cat A$} as the dg-category $\tau_{\leq 0} \cat A$ with the same objects of $\cat A$ and hom complexes defined by:
\[
(\tau_{\leq 0} \cat A)(A,B) = \tau_{\leq 0}(\cat A(A,B)),
\]
using the left truncation of complexes.

We denote by
\[
i_{\leq 0} \colon \tau_{\leq 0} \cat A \to \cat A
\]
the natural dg-functor, which is the identity on objects and given by the inclusions $\tau_{\leq 0} \cat A(A,B) \to \cat A(A,B)$ on hom complexes.
\end{definition}

We now check the compatibility of truncations with the closures under zero objects, finite or infinite direct sums or products which we discussed in \S \ref{subsec:addzeroobj} and \S \ref{subsec:addsumprod}.

\begin{lemma} \label{lemma:dgcat_trunc_formalzeroobj}
Let $\cat A$ be a dg-category. There is an isomorphism
\[
\tau_{\leq 0}({\cat A_{\{0\}}}) \cong (\tau_{\leq 0} \cat A)_{\{0\}},
\]
which we will interpret as an identification, dropping parentheses and writing just $\tau_{\leq 0} \cat A_{\{0\}}$. Similarly, for a given dg-functor $u \colon \cat A \to \cat B$, we may identify
\[
\tau_{\leq 0}({u_{\{0\}}}) = (\tau_{\leq 0} u)_{\{0\}}
\]
and write just $\tau_{\leq 0} u_{\{0\}}$.

In particular: if $\cat A$ has strict zero objects, the same is true for $\tau_{\leq 0} \cat A$.
\end{lemma}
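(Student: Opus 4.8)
The plan is to verify that the two dg-categories coincide literally, by comparing objects, hom-complexes and compositions, so that the asserted isomorphism is in fact an identity-on-objects strict isomorphism. First I would note that both $\tau_{\leq 0}(\cat A_{\{0\}})$ and $(\tau_{\leq 0}\cat A)_{\{0\}}$ have object set $\Ob(\cat A) \coprod \{0\}$, so the identity map on objects is the candidate functor. On hom-complexes there are two cases. For $A, B \in \Ob(\cat A)$ both sides give $\tau_{\leq 0}(\cat A(A,B))$, directly from the definition of $(-)_{\{0\}}$ and of the truncation of a dg-category. If $A = 0$ or $B = 0$, the left-hand side is $\tau_{\leq 0}$ of the zero complex, which is the zero complex, while the right-hand side is the zero complex by definition of $(-)_{\{0\}}$; so these agree as well.

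Next I would check compatibility with composition, which is where one uses the structure that makes $\tau_{\leq 0}\cat A$ a dg-category in the first place, namely the lax-monoidal comparison $\tau_{\leq 0}V \otimes \tau_{\leq 0}W \to \tau_{\leq 0}(V \otimes W)$. For $A, B, C \in \Ob(\cat A)$ the composition on either side is the one induced from $\cat A(B,C) \otimes \cat A(A,B) \to \cat A(A,C)$ through this comparison map, hence the same; any composition involving the object $0$ is forced to be the zero map on both sides. Therefore the identity-on-objects assignment is a strict isomorphism of dg-categories, and we may treat it as an identification, writing $\tau_{\leq 0}\cat A_{\{0\}}$ without parentheses. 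The statement for a dg-functor $u \colon \cat A \to \cat B$ is seen the same way: both $\tau_{\leq 0}(u_{\{0\}})$ and $(\tau_{\leq 0}u)_{\{0\}}$ act by $A \mapsto u(A)$ and $0 \mapsto 0$ on objects, coincide with $\tau_{\leq 0}$ applied to $u$ on hom-complexes between objects of $\cat A$, and are the zero map on hom-complexes involving $0$; so under the previous identification they are literally the same dg-functor.

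Finally, the ``in particular'' clause falls out of the hom-complex computation: if $Z$ is a strict zero object of $\cat A$, so that $\cat A(Z,A) = 0 = \cat A(A,Z)$ for all $A$, then $(\tau_{\leq 0}\cat A)(Z,A) = \tau_{\leq 0}(0) = 0$ and $(\tau_{\leq 0}\cat A)(A,Z) = 0$, whence $Z$ is still a strict zero object of $\tau_{\leq 0}\cat A$. I do not expect any genuine obstacle in this lemma; the only points requiring (minimal) attention are keeping the two cases apart — objects of $\cat A$ versus the adjoined $0$ — and recalling that $\tau_{\leq 0}$ sends the zero complex to the zero complex and is lax monoidal, which is precisely what is needed for both constructions to make sense at all.
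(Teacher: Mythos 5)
Your proposal is correct and fills in exactly the routine verification the paper leaves implicit (its proof is simply ``Straightforward''): the identity on objects, the case split on hom-complexes, compatibility with composition via the lax monoidal structure of $\tau_{\leq 0}$, and the observation that truncation preserves the zero complex. There is nothing to add.
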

\begin{proof}
Straightforward.
\end{proof}
\begin{lemma} \label{lemma:trunc_directsumclosure}
Let $\cat A$ be a dg-category. There is an isomorphism
\[
\tau_{\leq 0}({\cat A^\oplus}) \cong (\tau_{\leq 0} \cat A^\oplus),
\]
which we will interpret as an identification, dropping parentheses and writing just $\tau_{\leq 0} \cat A^\oplus$.

In particular: if $\cat A$ has strict direct sums (and zero objects) the same is true for $\tau_{\leq 0} \cat A$.
\end{lemma}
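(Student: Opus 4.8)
The plan is to follow the same pattern as Lemma~\ref{lemma:dgcat_trunc_formalzeroobj} and reduce the whole statement to the exactness of left truncation with respect to finite direct sums, i.e.\ to Lemma~\ref{lemma:truncation_exact}. (Here the right-hand side of the displayed isomorphism is read as $(\tau_{\leq 0}\cat A)^\oplus$.)

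First I would unwind both sides. By Lemma~\ref{lemma:dgcat_strictadditive_closure} the objects of $\cat A^\oplus$ are the finite sequences $(A_1,\dots,A_n)$ of objects of $\cat A$, identified with the dg-modules $\bigoplus_i\cat A(-,A_i)$; the hom-complexes are the finite direct sums
\[
\cat A^\oplus\bigl((A_1,\dots,A_m),(B_1,\dots,B_n)\bigr)=\bigoplus_{i,j}\cat A(A_i,B_j),
\]
and composition is matrix multiplication built from the composition of $\cat A$. Since $\tau_{\leq 0}\cat A$ has the same objects as $\cat A$, the dg-categories $\cat A^\oplus$, $\tau_{\leq 0}(\cat A^\oplus)$ and $(\tau_{\leq 0}\cat A)^\oplus$ all have the same underlying set of objects, and I would take the isomorphism to be the identity there. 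On hom-complexes the source is $\tau_{\leq 0}(\bigoplus_{i,j}\cat A(A_i,B_j))$ and the target is $\bigoplus_{i,j}\tau_{\leq 0}(\cat A(A_i,B_j))$, and these are canonically identified by Lemma~\ref{lemma:truncation_exact}.

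It then remains to check that this family of isomorphisms of hom-complexes is compatible with units and composition, so that it assembles into an isomorphism of dg-categories. For this I would use that $i_{\leq 0}\colon\tau_{\leq 0}\cat A\to\cat A$ is a dg-functor, so that the composition of $\tau_{\leq 0}\cat A$ is just the restriction of that of $\cat A$; hence the matrix multiplication computing composition in $(\tau_{\leq 0}\cat A)^\oplus$ is the restriction of the one in $\cat A^\oplus$, and after commuting $\tau_{\leq 0}$ past the finite direct sums occurring in the matrix product (Lemma~\ref{lemma:truncation_exact} again) this matches the composition of $\tau_{\leq 0}(\cat A^\oplus)$. I expect this last piece of bookkeeping — threading the naturality isomorphisms of Lemma~\ref{lemma:truncation_exact} correctly through the matrix product — to be the only mildly delicate point; there is no genuine obstacle. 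I would also record that the resulting isomorphism $\tau_{\leq 0}(\cat A^\oplus)\cong(\tau_{\leq 0}\cat A)^\oplus$ is compatible with the canonical embeddings of $\tau_{\leq 0}\cat A$ into either side, which justifies writing simply $\tau_{\leq 0}\cat A^\oplus$.

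For the final assertion I would argue directly. Since the degree-$0$ term of each hom-complex of $\tau_{\leq 0}\cat A$ equals $Z^0(\cat A(-,-))$, every closed degree-$0$ morphism of $\cat A$ is already a morphism of $\tau_{\leq 0}\cat A$, with the same identities and compositions. A strict finite biproduct $A_1\oplus\dots\oplus A_n$ of $\cat A$ (respectively a strict zero object) is witnessed solely by closed degree-$0$ inclusions and projections satisfying the biproduct equations (respectively by the vanishing of an identity morphism), so all of this data persists verbatim in $\tau_{\leq 0}\cat A$; hence $\tau_{\leq 0}\cat A$ again has strict direct sums and zero objects. Alternatively, when $\cat A$ has strict direct sums and zero objects the inclusion $\cat A\hookrightarrow\cat A^\oplus$ is a dg-equivalence, so $\tau_{\leq 0}\cat A\hookrightarrow\tau_{\leq 0}\cat A^\oplus=(\tau_{\leq 0}\cat A)^\oplus$ is one too, and the latter has strict direct sums and zero objects by Lemma~\ref{lemma:dgcat_strictadditive_closure}.
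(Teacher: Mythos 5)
Your argument is correct and is exactly the routine verification the paper leaves as ``straightforward'': identify the hom-complexes on both sides via the compatibility of $\tau_{\leq 0}$ with finite direct sums (Lemma \ref{lemma:truncation_exact}), check compatibility with the matrix composition, and observe that strict biproduct/zero-object data consists of closed degree-$0$ morphisms and strict equations, which survive truncation. No gaps.
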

\begin{proof}
Straightforward.
\end{proof}
\begin{lemma}  \label{lemma:trunc_prodclosure}
Let $\cat A$ be a dg-category and let $\kappa$ be a regular cardinal. If $\cat A$ has strict direct products indexed by sets of cardinality $\leq \kappa$, the same is true for $\tau_{\leq 0} \cat A$. Dually, if $\cat A$ has strict coproducts indexed by sets of cardinality $\leq \kappa$, the same is true for $\tau_{\leq 0} \cat A$.
\end{lemma}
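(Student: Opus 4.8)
The plan is to show that the \emph{same} objects that serve as strict products in $\cat A$ continue to serve as strict products in $\tau_{\leq 0}\cat A$. So let $\{A_i : i \in I\}$ be a family of objects of $\cat A$ with $|I| \leq \kappa$, and let $\prod_i A_i \in \cat A$ be a strict product, equipped with closed degree $0$ projection morphisms $p_i \colon \prod_i A_i \to A_i$ such that for every $B \in \cat A$ the induced map
\[
\cat A\Bigl(B, \prod_i A_i\Bigr) \longrightarrow \prod_i \cat A(B, A_i)
\]
is an isomorphism of complexes. Since each $p_i$ is closed of degree $0$, it lies in $Z^0(\cat A(\prod_i A_i, A_i)) = (\tau_{\leq 0}\cat A)(\prod_i A_i, A_i)^0$, hence still defines a morphism in $\tau_{\leq 0}\cat A$. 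Thus $\prod_i A_i$ together with the $p_i$ is the natural candidate for a strict product in $\tau_{\leq 0}\cat A$.

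Next I would identify the comparison map in $\tau_{\leq 0}\cat A$ as a truncation of the one in $\cat A$. By definition $(\tau_{\leq 0}\cat A)(B,-) = \tau_{\leq 0}(\cat A(B,-))$, and composition in $\tau_{\leq 0}\cat A$ is the restriction of composition in $\cat A$; hence postcomposition with $p_i$ in $\tau_{\leq 0}\cat A$ is exactly $\tau_{\leq 0}$ applied to postcomposition with $p_i$ in $\cat A$. Therefore the comparison map
\[
(\tau_{\leq 0}\cat A)\Bigl(B, \prod_i A_i\Bigr) \longrightarrow \prod_i (\tau_{\leq 0}\cat A)(B, A_i)
\]
factors as the composite of $\tau_{\leq 0}$ applied to the isomorphism $\cat A(B,\prod_i A_i) \xrightarrow{\sim} \prod_i \cat A(B,A_i)$, followed by the canonical isomorphism $\tau_{\leq 0}\prod_i \cat A(B,A_i) \xrightarrow{\sim} \prod_i \tau_{\leq 0}\cat A(B,A_i)$ supplied by Lemma \ref{lemma:truncation_exact}. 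Both factors are isomorphisms of complexes (for the first, because $\tau_{\leq 0}$ is functorial on complexes), so the comparison map is an isomorphism, which is precisely the assertion that $\prod_i A_i$ is a strict product in $\tau_{\leq 0}\cat A$.

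For the dual statement I would observe that truncation commutes with passing to opposite dg-categories: since $\opp{\cat A}(A,B) = \cat A(B,A)$ as complexes (the Koszul sign twist affects only the composition law, not the underlying hom-complexes or their differentials), one has $\tau_{\leq 0}(\opp{\cat A}) = \opp{(\tau_{\leq 0}\cat A)}$. A strict coproduct of size $\leq \kappa$ in $\cat A$ is the same datum as a strict product of that size in $\opp{\cat A}$, so applying the first part to $\opp{\cat A}$ yields the claim for $\tau_{\leq 0}\cat A$.

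I do not anticipate a genuine obstacle; the only points requiring a modicum of care are (i) checking that the comparison map obtained from the universal property genuinely coincides with the truncation of the comparison map in $\cat A$, and (ii) for the dual, verifying the opposite-category identity at the level of differentials (not merely graded pieces) — both immediate from the definitions once the Koszul sign convention is recalled.
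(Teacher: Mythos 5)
Your proposal is correct and follows essentially the same route as the paper: take the strict product from $\cat A$, observe that the isomorphism $\cat A(B,\prod_i A_i)\xrightarrow{\sim}\prod_i\cat A(B,A_i)$ truncates to the corresponding comparison map in $\tau_{\leq 0}\cat A$ via Lemma \ref{lemma:truncation_exact}, and handle coproducts by passing to $\opp{\cat A}$. The extra care you take with the projections surviving truncation and with the opposite-category identification is implicit in the paper's shorter argument.
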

\begin{proof}
We prove the first assertion, the other one following from the same argument applied to $\opp{\cat A}$.

Let $\{A_i : i \in I\}$ be a family of objects in $\cat A$, with $|I|\leq \kappa$. Let $A$ be a product of the $A_i$, together with the isomorphism of right $\cat A$-dg-modules
\[
\cat A(-,A) \xrightarrow{\sim} \prod_{i \in I} \cat A(-,A_i).
\]
Taking truncations and using Lemma \ref{lemma:truncation_exact}, we find an isomorphism of right $\tau_{\leq 0} \cat A$-dg-modules:
\[
\tau_{\leq 0} \cat A(-,A) \xrightarrow{\sim} \tau_{\leq 0} \prod_{i \in I} \cat A(-,A_i) \xrightarrow{\sim} \prod_{i \in I} \tau_{\leq 0} \cat A(-,A_i).
\]
This exhibits $A$ as a product of the $A_i$ in $\tau_{\leq 0} \cat A$.
\end{proof}

\subsection{t-structures and co-t-structures} \label{subsection:tstruct_def}
A \emph{t-structure} on a triangulated category gives a formal way of truncating objects and also yields a cohomology theory. T-structures on (pretriangulated) dg-categories are defined just as t-structures on their (triangulated) homotopy categories.
\begin{definition}
Let $\cat A$ be a pretriangulated dg-category. A \emph{t-structure} on $\cat A$ is a t-structure on $H^0(\cat A)$ in the sense of \cite{beilinson-bernstein-deligne-perverse}.

We shall denote a given t-structure as a pair $(\cat A_{\leq 0}, \cat A_{\geq 0})$. In general, $\cat A_{\leq n}$ and $\cat A_{\geq n}$ denote the full dg-subcategories respectively spanned by the objects of the left and right aisles $H^0(\cat A)_{\leq n}$ and $H^0(\cat A)_{\geq n}$, for $n \in \mathbb Z$.
\end{definition}

The intersection $H^0(\cat A)_{\leq 0} \cap H^0(\cat A)_{\geq 0}$ is a full abelian subcategory of $H^0(\cat A)$ denoted by $H^0(\cat A)^\heartsuit$ and called the \emph{heart} of the t-structure. We denote by
\begin{equation}
H^0_t \colon H^0(\cat A) \to H^0(\cat A)^\heartsuit
\end{equation}
the cohomological functor associated to the t-structure. We also set
\[
H^n_t(-)=H^0_t(-[n]).
\]
We will say that a given t-structure is \emph{non-degenerate} if $A \cong 0$ in $H^0(\cat A)$ is equivalent to $H^n_t(A) \cong 0$ in $H^0(\cat A)^\heartsuit$ for all $n \in \mathbb Z$.

The inclusion of the left aisle $i_{\leq n} \colon H^0(\cat A)_{\leq n} \hookrightarrow H^0(\cat A)$ has a right adjoint
\begin{equation}
    \tau_{\leq n} \colon H^0(\cat A) \to H^0(\cat A)_{\leq n}.
\end{equation}
Analogously, the inclusion of the right aisle $i_{\geq n} \colon H^0(\cat A)_{\geq n} \to H^0(\cat A)$ has a left adjoint
\begin{equation}
    \tau_{\geq n} \colon H^0(\cat A) \to H^0(\cat A)_{\geq n}.
\end{equation}
Moreover, for any $A \in H^0(\cat A)$ and $n \in \mathbb Z$, there is a distinguished triangle
\begin{equation}
    i_{\leq n} \tau_{\leq n} A \to A \to i_{\geq n+1} \tau_{\geq n+1} A;
\end{equation}
the arrows are given respectively by the counit of the adjunction $i_{\leq n} \dashv \tau_{\leq n}$ and by the unit of the adjunction $\tau_{\geq n+1} \dashv i_{\geq n+1}$. We will sometimes ease notation and write $\tau_{\leq n}A$ instead of $i_{\leq n} \tau_{\leq n} A$, or $\tau_{\geq n+1} A$ instead of $i_{\geq n+1} \tau_{\geq n+1} A$.

We will need the following easy lemma:
\begin{lemma} \label{lemma:truncations_iso_complexes}
Let $\cat A$ be a pretriangulated dg-category with a t-structure $(\cat A_{\leq 0}, \cat A_{\geq 0})$. For any $A \in \cat A_{\leq n}$ and $B \in \cat A$ (with $n \in \mathbb Z$), the morphism $\tau_{\leq n} B \to B$ induces an isomorphism
\[
\tau_{\leq 0} \cat A(-,\tau_{\leq n} B) \xrightarrow{\approx} \tau_{\leq 0} \cat A(-,B),
\]
in the derived category $\dercomp(\tau_{\leq 0}\cat A_{\leq n})$.

Analogously, for any $B \in \cat A$ and $C \in \cat A_{\geq n}$, the morphism $B \to \tau_{\geq n} B$ induces an isomorphism
\[
\tau_{\leq 0} \cat A(\tau_{\geq n} B, -) \xrightarrow{\approx} \tau_{\leq 0} \cat A(B, -),
\]
in the derived category $\dercomp(\tau_{\leq 0}\opp{\cat A}_{\geq n})$.
\end{lemma}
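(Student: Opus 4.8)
The plan is to reduce both assertions to an objectwise computation on the hom-complexes and then invoke the orthogonality axiom of the t-structure. I will describe the argument for the first assertion; the second is entirely dual.

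First I would make the morphism explicit. Pick a closed degree $0$ morphism $\phi\colon \tau_{\leq n}B\to B$ in $Z^0(\cat A)$ lifting the counit of $i_{\leq n}\dashv\tau_{\leq n}$ (such a lift exists, and any two are homotopic, so the resulting class in the derived category is independent of the choice). Post-composition with $\phi$ gives a morphism of right $\cat A$-dg-modules $\phi_*\colon\cat A(-,\tau_{\leq n}B)\to\cat A(-,B)$; applying $\tau_{\leq 0}(-)$ and restricting to $\cat A_{\leq n}$ yields a morphism $\tau_{\leq 0}\cat A(-,\tau_{\leq n}B)\to\tau_{\leq 0}\cat A(-,B)$ of right $\tau_{\leq 0}\cat A_{\leq n}$-dg-modules, where the $\tau_{\leq 0}\cat A_{\leq n}$-linearity (and the very fact that $\tau_{\leq 0}\cat A(-,X)$ is a $\tau_{\leq 0}\cat A_{\leq n}$-dg-module) comes from the lax monoidality $\tau_{\leq 0}V\otimes\tau_{\leq 0}W\to\tau_{\leq 0}(V\otimes W)$, which holds since the left-hand side is already concentrated in nonpositive degrees. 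This is the morphism of the statement. Since a morphism of dg-modules that is an objectwise quasi-isomorphism has acyclic cone and hence becomes invertible in the derived category, it suffices to show: for every $A\in\cat A_{\leq n}$, the map of complexes $\tau_{\leq 0}\cat A(A,\tau_{\leq n}B)\to\tau_{\leq 0}\cat A(A,B)$ is a quasi-isomorphism.

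Next I would compute cohomology. For $X\in\cat A$, since $\cat A$ is pretriangulated one has $H^m(\cat A(A,X))=\Hom_{H^0(\cat A)}(A,X[m])$, and by the basic properties of the truncation recalled in \S\ref{subsection:truncations} this gives $H^m(\tau_{\leq 0}\cat A(A,X))=\Hom_{H^0(\cat A)}(A,X[m])$ for $m\leq 0$ and $0$ for $m>0$. Thus the claim reduces to showing that the counit induces an isomorphism $\Hom_{H^0(\cat A)}(A,(\tau_{\leq n}B)[m])\xrightarrow{\sim}\Hom_{H^0(\cat A)}(A,B[m])$ for every $m\leq 0$. For this, apply the cohomological functor $\Hom_{H^0(\cat A)}(A,-)$ to the distinguished triangle
\[
\tau_{\leq n}B\to B\to\tau_{\geq n+1}B\to(\tau_{\leq n}B)[1];
\]
the map in question then sits in a long exact sequence between $\Hom_{H^0(\cat A)}(A,(\tau_{\geq n+1}B)[m-1])$ and $\Hom_{H^0(\cat A)}(A,(\tau_{\geq n+1}B)[m])$. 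Since $\tau_{\geq n+1}B\in\cat A_{\geq n+1}$, for $m\leq 0$ we have $(\tau_{\geq n+1}B)[m]\in\cat A_{\geq n+1-m}\subseteq\cat A_{\geq n+1}$ and likewise $(\tau_{\geq n+1}B)[m-1]\in\cat A_{\geq n+2-m}\subseteq\cat A_{\geq n+1}$; as $A\in\cat A_{\leq n}$, the orthogonality $\Hom_{H^0(\cat A)}(\cat A_{\leq n},\cat A_{\geq n+1})=0$ kills both terms, so the map is an isomorphism. This proves the first assertion.

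The second assertion follows by applying the first to $\opp{\cat A}$ equipped with the opposite t-structure (which exchanges the two aisles and the truncation functors, with the indices re-shifted accordingly), unwinding the definitions to recover the stated isomorphism in $\dercomp(\tau_{\leq 0}\opp{\cat A}_{\geq n})$. Alternatively one argues directly, using the triangle $\tau_{\leq n-1}B\to B\to\tau_{\geq n}B\to(\tau_{\leq n-1}B)[1]$ and $\Hom_{H^0(\cat A)}(-,C)$ for $C\in\cat A_{\geq n}$, together with the vanishing $\Hom_{H^0(\cat A)}(\tau_{\leq n-1}B,C[m])=0$ valid for $m\leq 0$ because $C[m]\in\cat A_{\geq n-m}\subseteq\cat A_{\geq n}$ while $\tau_{\leq n-1}B\in\cat A_{\leq n-1}$. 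There is no serious obstacle here; the only points requiring a little care are checking the $\tau_{\leq 0}\cat A_{\leq n}$-linearity (lax monoidality of $\tau_{\leq 0}$) and keeping track of the shift conventions so that the error terms land in the correct aisle.
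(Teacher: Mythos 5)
Your proof is correct and follows essentially the same route as the paper: reduce to an objectwise statement, identify the cohomology of the (truncated) hom-complexes with $\Hom$-groups in $H^0(\cat A)$, and observe that the counit $\tau_{\leq n}B \to B$ induces an isomorphism on $\Hom(A',-)$ for $A'$ in the left aisle. The only difference is that the paper simply cites the adjunction $i_{\leq n}\dashv\tau_{\leq n}$ at that point, whereas you re-derive that fact from the truncation triangle and the orthogonality axiom — a harmless unwinding of the same argument.
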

\begin{proof}
We prove only the first assertion; the argument for the second one is analogous. Let $i \geq 0$. The cohomology
\[
H^{-i}(\cat A(A,\tau_{\leq n} B) \to H^{-i}(\cat A(A,B))
\]
can be identified with
\begin{equation} \label{equation:tstruct_lefttrunc_adj} \tag{$\ast$}
H^0(\cat A)(A[i], \tau_{\leq n} B) \to H^0(\cat A)(A[i],B).
\end{equation}
Now, $A[i]$ lies in $\cat A_{\leq n}$ because left aisles are closed under nonnegative shifts. Hence, the result follows by observing that \eqref{equation:tstruct_lefttrunc_adj} is an isomorphism, since $\tau_{\leq n} B \to B$ is the counit of the adjunction involving the left truncation $\tau_{\leq n}$.
\end{proof}

In order to completely determine a t-structure on $\cat A$, it is often enough to specify just one of the subcategories $\cat A_{\leq 0}$ or $\cat A_{\geq 0}$:
\begin{proposition}[{\cite[Proposition 1.1]{keller-vossieck-aisles}}] \label{proposition:tstruct_from_aisles}
Let $\cat A$ be a pretriangulated dg-category. Let $\cat A_{\leq 0}$ be a full dg-subcategory of $\cat A$ such that:
\begin{itemize}
    \item $H^0(\cat A_{\leq 0})$ is strictly full, additive and stable under extensions in $H^0(\cat A)$.
    \item $H^0(\cat A_{\leq 0})$ is stable under positive shifts in $H^0(\cat A)$.
    \item The inclusion functor $H^0(\cat A_{\leq 0}) \hookrightarrow H^0(\cat A)$ has a right adjoint.
\end{itemize}
Then, if $\cat A_{\geq 0}$ is the full dg-subcategory of $\cat A$ spanned by the objects of $H^0(\cat A_{\leq 0})^\perp[1]$, we conclude that $(\cat A_{\leq 0}, \cat A_{\geq 0})$ is a t-structure on $\cat A$.

Dually, let $\cat A_{\geq 0}$ be a full dg-subcategory of $\cat A$ such that:
\begin{itemize}
    \item $H^0(\cat A_{\geq 0})$ is strictly full, additive and stable under extensions in $H^0(\cat A)$.
    \item $H^0(\cat A_{\geq 0})$ is stable under negative shifts in $H^0(\cat A)$.
    \item The inclusion functor $H^0(\cat A_{\geq 0}) \hookrightarrow H^0(\cat A)$ has a left adjoint.
\end{itemize}
Then, if $\cat A_{\leq 0}$ is the full dg-subcategory of $\cat A$ spanned by the objects of ${^\perp}\!H^0(\cat A_{\geq 0})[-1]$, we conclude that $(\cat A_{\leq 0}, \cat A_{\geq 0})$ is a t-structure on $\cat A$.
\end{proposition}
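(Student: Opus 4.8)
The plan is to reduce everything to the corresponding statement for triangulated categories, since no dg-specific content is involved beyond the definition. By definition, a pair $(\cat A_{\leq 0}, \cat A_{\geq 0})$ of full dg-subcategories is a t-structure on the pretriangulated dg-category $\cat A$ precisely when $(H^0(\cat A_{\leq 0}), H^0(\cat A_{\geq 0}))$ is a t-structure on the triangulated category $H^0(\cat A)$. The three hypotheses imposed on $\cat A_{\leq 0}$ are exactly the axioms for $H^0(\cat A_{\leq 0})$ to be an \emph{aisle} of $H^0(\cat A)$ in the sense of Keller and Vossieck, and with the stated definition of $\cat A_{\geq 0}$ the pair $(H^0(\cat A_{\leq 0}), H^0(\cat A_{\leq 0})^\perp[1])$ is a t-structure on $H^0(\cat A)$ by \cite[Proposition 1.1]{keller-vossieck-aisles}. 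So the first half of the statement is a direct translation of that result.

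To indicate why the triangulated statement holds: for $X \in H^0(\cat A)$, write $\tau_{\leq 0} X \to X$ for the universal map to $X$ from an object of $H^0(\cat A_{\leq 0})$ supplied by the right adjoint in the third hypothesis, and use that $\cat A$ is pretriangulated to complete it to a distinguished triangle $\tau_{\leq 0}X \to X \to X' \to \tau_{\leq 0}X[1]$. The essential point is that $X' \in H^0(\cat A_{\leq 0})^\perp$: applying $\Hom(U,-)$ for $U \in H^0(\cat A_{\leq 0})$ and using that the counit induces an isomorphism $\Hom(U, \tau_{\leq 0}X) \xrightarrow{\sim} \Hom(U, X)$, together with stability of $H^0(\cat A_{\leq 0})$ under positive shifts and under extensions, one forces $\Hom(U, X') = 0$. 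Granting this, the truncation triangle for each object is the one just produced, the orthogonality $\Hom(\cat A_{\leq 0}, \cat A_{\geq 1}) = 0$ holds by definition of the right orthogonal, and the stability of both aisles under the appropriate shifts follows formally from closure of $H^0(\cat A_{\leq 0})$ under positive shifts; these together are the t-structure axioms. I expect the verification $X' \in H^0(\cat A_{\leq 0})^\perp$ to be the only step that is not pure bookkeeping.

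The dual statement follows by applying the first half to $\opp{\cat A}$, which is again pretriangulated (\S\ref{subsec:pretr_dgcat}) with $H^0(\opp{\cat A}) = \opp{H^0(\cat A)}$. A full dg-subcategory $\cat A_{\geq 0}$ for which $H^0(\cat A_{\geq 0})$ is strictly full, additive, stable under extensions, stable under negative shifts, and has inclusion admitting a left adjoint corresponds, under this identification, to an aisle of $\opp{H^0(\cat A)}$; the t-structure produced there translates back to a t-structure $(\cat A_{\leq 0}, \cat A_{\geq 0})$ on $\cat A$ in which $\cat A_{\leq 0}$ is spanned by ${}^\perp H^0(\cat A_{\geq 0})[-1]$, as claimed. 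Since a t-structure is determined by either of its two aisles, there is nothing further to check.
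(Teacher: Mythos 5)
Your proposal is correct and matches the paper exactly: the paper gives no proof of this proposition, stating it as a direct citation of Keller--Vossieck's Proposition 1.1 applied to $H^0(\cat A)$, which is precisely your reduction (a t-structure on a pretriangulated dg-category being, by the paper's own definition, a t-structure on its homotopy category). Your added sketch of why an aisle generates a t-structure names the right ingredients (the counit isomorphism plus closure under positive shifts and extensions to force the cone into the right orthogonal), and correctly flags that step as the only nontrivial one; the dualization via $\opp{\cat A}$ is also handled correctly.
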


Along t-structures, we may endow a pretriangulated dg-category with a \emph{co-t-structure}:
\begin{definition}
Let $\cat A$ be a pretriangulated dg-category. A \emph{co-t-structure} on $\cat A$ is a co-t-structure on the homotopy category $H^0(\cat A)$ in the sense of \cite[Definition 1.1.1]{bondarko-weight}, \cite[Definition 2.4]{paukszello-cotstruct}. See also \cite{jorgensen-cotstruct} for a more recent survey.

We shall denote a given co-t-structure as a pair $(\cat A_{\geq 0}^w, \cat A_{\leq 0}^w)$. In general, $\cat A_{\geq n}^w$ and $\cat A_{\leq n}^w$ denote the full dg-subcategories respectively spanned by the objects of the right and left coaisles $H^0(\cat A)_{\leq n}^w$ and $H^0(\cat A)_{\geq n}^w$, for $n \in \mathbb Z$.

The intersection $H^0(\cat A)^w_{\geq 0} \cap H^0(\cat A)^w_{\leq 0}$ is called the \emph{co-heart} of the co-t-structure.
\end{definition}

Let $(\cat A_{\geq 0}^w, \cat A_{\leq 0}^w)$ be a co-t-structure on $\cat A$ and let $A \in \cat A$.  For $n \in \mathbb Z$, we have a distinguished triangle in $H^0(\cat A)$
\begin{equation}
    \sigma_{\geq n} A \to A \to \sigma_{\leq n-1} A,
\end{equation}
where $\sigma_{\geq n} A \in \cat A_{\geq n}^w$ and $\sigma_{\leq n-1} A \in \cat A_{\leq n-1}^w$. It is worth remarking that, in contrast to the truncations with respect to a t-structure, $\sigma_{\geq n} A$ and $\sigma_{\leq n-1} A$ \emph{do not in general yield functors}.

Sometimes, we have both a t-structure and a co-t-structure on a given dg-category which interact nicely with each other:
\begin{definition} \label{definition:adjacent_cotstruct}
Let $\cat A$ be a pretriangulated dg-category. Moreover, let $(\cat A_{\leq 0}, \cat A_{\geq 0})$ be a t-structure on $\cat A$ and let $(\cat A_{\geq 0}^w, \cat A_{\leq 0}^w)$ be a co-t-structure on $\cat A$. We say that the co-t-structure is \emph{left adjacent} (respectively \emph{right adjacent}) to the t-structure if $\cat A_{\geq 0} = \cat A_{\geq 0}^w$ (respectively if $\cat A_{\leq 0}=\cat A_{\leq 0}^w$).
\end{definition}
This notion of compatibility between t-structures and co-t-strucures is related to ``approximations'' with derived injective or projective objects: this is dealt with in Appendix \ref{appendix:derproj_cotstruct}.

\section{Twisted complexes} \label{section:twistedcomplexes}
\subsection{Basics} \label{subsection:twcomplexes_basics}
We now define the main object of this article, namely, \emph{twisted complexes} on a given dg-category.
\begin{definition}
Let $\varcat A$ be a dg-category, strictly concentrated in nonpositive degrees and with strict zero objects. We define the \emph{dg-category $\Tw(\varcat A)$ of (one-sided, unbounded) twisted complexes on $\varcat A$} as follows:
\begin{itemize}
    \item An object $X^\bullet = (X^i,x_i^j)$ of $\Tw(\varcat A)$ is a sequence $(X^i)_{i \in \mathbb Z}$ of objects of $\varcat A$ together with morphisms
    \[
    x_i^j \colon X^i \to X^j,
    \]
    each of degree $i-j+1$, such that the following equation holds:
    \begin{equation}
        (-1)^j dx_i^j + \sum_k x^j_k x^k_i = 0.
    \end{equation}
    Notice that $x^i_j=0$ whenever $i-j+1>0$. 
    \item A degree $p$ morphism $f \colon (X^i,x_i^j) \to (Y^i,y_i^j)$ is a family of morphisms
    \[
    f_i^j \colon X^i \to Y^j,
    \]
    each of degree $i-j+p$. Notice that $f_i^j=0$ whenever $i-j+p>0$. The differential of $f$ is given by
    \begin{equation}
        (df)_i^j = (-1)^j df_i^j + \sum_k (y^j_k f^k_i - (-1)^p f^j_k x^k_i).
    \end{equation}
    \item Given morphisms $f \colon (X^i, x_i^j) \to (Y^i,y_i^j)$ and $g \colon (Y^i,y_i^j) \to (Z^i, z_i^j)$, its composition is given by:
    \begin{equation}
        (g \circ f)_i^j = \sum_k g^j_k f^k_i.
    \end{equation}
    Identities are the obvious ones.
\end{itemize}

Checking that $\Tw(\varcat A)$ is indeed a dg-category is a little tedious but straightforward.

Finally, if $\varcat B$ is any dg-category (without any additional hypothesis), we set:
\begin{equation} \label{eq:twcomplexes_arbitrarydgcat}
\Tw(\varcat B) = \Tw(\tau_{\leq 0} \varcat B_{\{0\}}),
\end{equation}
recalling Remark \ref{lemma:dgcat_trunc_formalzeroobj}.
\end{definition}

\begin{remark} \label{remark:twcomplex_coefficients_inclusion}
Let $\varcat A$ be a dg-category strictly concentrated in nonpositive degrees and with strict zero objects. Then, there is a natural dg-functor
\begin{equation}
\varcat A \to \Tw(\varcat A),
\end{equation}
which sends an object of $A \in \varcat A$ to the following twisted complexe concentrated in degree $0$:
\[
\cdots \to 0 \to A \to 0 \to \cdots.
\]
It is easy to show that $\varcat A \to \Tw(\varcat A)$ is fully faithful.

If $\varcat A$ is strictly concentrated in nonpositive degrees but does not necessarily have zero objects, we still have a fully faithful dg-functor:
\[
\varcat A \hookrightarrow \varcat A_{\{0\}} \hookrightarrow \Tw(\varcat A).
\]

If $\varcat A$ is \emph{cohomologically} concentrated in nonpositive degrees and has cohomological zero objects, we have a diagram of dg-functors:
\[
\varcat A \xleftarrow{\approx} \tau_{\leq 0} \varcat A \xrightarrow{\approx} \tau_{\leq 0} \varcat A_{\{0\}} \hookrightarrow \Tw(\varcat A).
\]
The arrows marked with $\xrightarrow{\approx}$ are quasi-equivalences, hence we may find a quasi-functor $\varcat A \to \Tw(\varcat A)$, again mapping any object $A \in \varcat A$ to the correspondent twisted complex concentrated in degree $0$, which is fully faithful after taking $H^*(-)$. Therefore, the full dg-fubcategory of $\Tw(\varcat A)$ spanned by twisted complexes concentrated in degree $0$ is quasi-equivalent to $\varcat A$.

From this discussion, it is clear that the definition of $\Tw(\varcat A)$ is meaningful only for dg-categories concentrated in nonpositive degrees, strictly or cohomologically. The addition of formal strict zero objects to $\varcat A$ is useful in order to deal with bounded twisted complexes.
\end{remark}

\begin{remark}
The ``one-sidedness'' of both twisted complexes $(X^i,x^j_i)$ and morphisms $f \colon (X^i,x_i^j) \to (Y^i,y_i^j)$ (namely, $x_i^j=0$ for $i-j+1>0$ and $f_i^j =0$ for $i-j+p>0$) implies that the sums $\sum_k x^j_k x^k_i$ and $\sum_k(y^j_k f^k_i - (-1)^p f^j_k x^k_i)$ are finite, and everything is well-defined.
\end{remark}
\begin{remark}
We can picture a twisted complex as follows:
\[\begin{tikzcd}
	\cdots & {X^i} & {X^{i+1}} & {X^{i+2}} & {X^{i+3}} & {X^{i+4}} & \cdots
	\arrow["{x_{i+2}^{i+3}}", from=1-4, to=1-5]
	\arrow["{x_{i+1}^{i+2}}", from=1-3, to=1-4]
	\arrow["{x_{i+3}^{i+4}}", from=1-5, to=1-6]
	\arrow[from=1-6, to=1-7]
	\arrow[from=1-1, to=1-2]
	\arrow["{x_i^{i+1}}", from=1-2, to=1-3]
	\arrow["{x_i^{i+2}}"{description}, curve={height=-24pt}, from=1-2, to=1-4]
	\arrow["{x_i^{i+3}}"{description}, shift left=1, curve={height=-30pt}, from=1-2, to=1-5]
	\arrow["{x_i^{i+4}}"{description}, shift right=1, curve={height=30pt}, from=1-2, to=1-6]
	\arrow["{x_{i+1}^{i+3}}"{description}, curve={height=12pt}, from=1-3, to=1-5]
	\arrow["{x_{i+1}^{i+4}}"{description}, curve={height=24pt}, from=1-3, to=1-6]
	\arrow["{x_{i+1}^{i+4}}"{description}, shift left=1, curve={height=-30pt}, from=1-3, to=1-6]
\end{tikzcd}\]
To simplify notation, we shall often avoid picturing the ``higher twisted differentials'' $x_i^{i+2}, x_i^{i+3}, \ldots$ ($i \in \mathbb Z)$.

To better understand morphisms of twisted complexes, it is worth visualizing a degree $-1$ morphism:
\[\begin{tikzcd}
	&& \cdots & {X^i} & \cdots \\
	\cdots & {Y^{i-2}} & {Y^{i-1}} & {Y^i} & {Y^{i+1}} & \cdots & {}
	\arrow[from=1-3, to=1-4]
	\arrow[from=1-4, to=1-5]
	\arrow[from=2-1, to=2-2]
	\arrow[from=2-2, to=2-3]
	\arrow[from=2-3, to=2-4]
	\arrow[from=2-4, to=2-5]
	\arrow[from=2-5, to=2-6]
	\arrow["{-1}"{description}, from=1-4, to=2-4]
	\arrow["0"{description}, from=1-4, to=2-3]
	\arrow[""{name=0, anchor=center, inner sep=0}, "{-2}"{description}, from=1-4, to=2-5]
	\arrow[""{name=1, anchor=center, inner sep=0}, draw=none, from=1-4, to=2-7]
	\arrow["\cdots"{description}, Rightarrow, draw=none, from=0, to=1]
\end{tikzcd}\]
and also a degree $1$ morphism:
\[\begin{tikzcd}
	\cdots & {X^i} & \cdots \\
	\cdots & {Y^i} & {Y^{i+1}} & {Y^{i+2}} & {Y^{i+3}} & \cdots & {}
	\arrow[from=1-1, to=1-2]
	\arrow[from=1-2, to=1-3]
	\arrow[from=2-2, to=2-3]
	\arrow[from=2-3, to=2-4]
	\arrow[from=2-4, to=2-5]
	\arrow[from=2-1, to=2-2]
	\arrow["0"{description}, from=1-2, to=2-3]
	\arrow["{-1}"{description}, from=1-2, to=2-4]
	\arrow[from=2-5, to=2-6]
	\arrow[""{name=0, anchor=center, inner sep=0}, "{-2}"{description}, from=1-2, to=2-5]
	\arrow[""{name=1, anchor=center, inner sep=0}, draw=none, from=1-2, to=2-7]
	\arrow["\cdots", Rightarrow, draw=none, from=0, to=1]
\end{tikzcd}\]
We pictured only nonzero components, and listed their degrees on the labels.
\end{remark}

If $X^\bullet = (X^i,x_i^j)$ is an object in $\Tw(\varcat A)$, we may always define its \emph{shift} $X^\bullet[n] =(X[n]^i, x[n]_i^j)$ as follows, for all $n \in \mathbb Z$:
\begin{equation} \label{eq:twistedcomplex_shift}
\begin{split}
    X^\bullet[n]^i &= X^{i+n}, \\
    x[n]_i^j & = (-1)^n x_{i+n}^{j+n}.
\end{split}
\end{equation}
We can easily check:
\begin{equation}
\Hom_{\Tw(\varcat A)}(X^\bullet,Y^\bullet[n]) \cong \Hom_{\Tw(\varcat A)}(X^\bullet[-n],Y^\bullet) \cong \Hom_{\Tw(\varcat A)}(X^\bullet,Y^\bullet)[n].
\end{equation}

The construction $\Tw(-)$ is functorial. Namely, if $u \colon \varcat A \to \varcat B$ is a dg-functor between dg-category strictly concentrated in nonpositive degrees, we can define a dg-functor
\begin{equation}
    \Tw(u) \colon \Tw(\varcat A) \to \Tw(\varcat B)
\end{equation}
as follows:
\begin{itemize}
    \item For any object $X^\bullet = (X^i,x_i^j)$ in $\Tw(\varcat A)$, we set 
    \begin{equation}
    \Tw(u)(X^i,x_i^j) = (u(X^i), u(x_i^j)).
    \end{equation}
    \item If $f \colon (X^i,x_i^j) \to (Y^i,y_i^j)$ is a degree $p$ morphism in $\Tw(\varcat A)$, we define:
    \begin{equation}
    \begin{split}
    \Tw(u)(f) \colon (u(X^i), u(x_i^j)) & \to (u(Y^i), u(y_i^j)), \\
    \Tw(u)(f)_i^j &= u(f_i^j).
    \end{split}
    \end{equation}
\end{itemize}
It is easy to see that $\Tw(1_{\varcat A}) = 1_{\Tw(\varcat A)}$ and that $\Tw(vu)=\Tw(v)\Tw(u)$, for composable dg-functors $u$ and $v$.

If $u \colon \varcat A \to \varcat B$ is a dg-functor between any dg-categories, we set:
\begin{equation}
    \Tw(u) = \Tw(\tau_{\leq 0} u_{\{0\}}).
\end{equation}

In the end, we obtain a functor
\begin{equation}
    \Tw(-) \colon \kat{dgCat} \to \kat{dgCat}.
\end{equation}

We may also define dg-categories of twisted complexes which are bounded from above or below:
\begin{definition} \label{definition:boundedtwistedcomplexes}
Let $\varcat A$ be a dg-category. We define $\Tw^+(\varcat A)$ to be the full dg-subcategory of $\Tw(\varcat A)$ of twisted complexes $(X^i,x_i^j)$ such that $X^n=0$ for $n \ll 0$.

Similarly, we define $\Tw^-(\varcat A)$ to be the full dg-subcategory of $\Tw(\varcat A)$ of twisted complexes $(X^i,x_i^j)$ such that $X^n=0$ for $n \gg 0$.

The mappings $\varcat A \mapsto \Tw^-(\varcat A)$ and $\varcat A \mapsto \Tw^+(\varcat A)$ are functorial in the obvious way.
\end{definition}

Twisted complexes are quite nicely behaved with respect to taking opposites. Namely, we can directly prove the following isomorphisms, which we will view as identifications:
\begin{equation} \label{equation:twistedcomplexes_opposites}
\begin{split}
    \opp{\Tw(\opp{\varcat A})} & \cong \Tw(\varcat A), \\
    \opp{\Tw^-(\opp{\varcat A})} & \cong \Tw^+(\varcat A), \\
    \opp{\Tw^+(\opp{\varcat A})} & \cong \Tw^-(\varcat A).
\end{split}
\end{equation}

\subsection{Brutal truncations of twisted complexes} \label{subsection:twcompl_brutaltruncations}
Throughout this part, we fix a dg-category $\varcat A$ strictly concentrated in nonpositive degrees and with strict zero objects.

\begin{definition}
Let $X^\bullet = (X^i, x_i^j)$ be an object in $\Tw(\varcat A)$. For all $N \in \mathbb Z$, we define a twisted complex $(\sigma_{\geq N}X)^\bullet = ((\sigma_{\geq N}X)^i, (\sigma_{\geq N}x)_i^j)$ as follows:
\begin{equation}
    \begin{split}
        (\sigma_{\geq N}X)^i &= \begin{cases} X^i & i,j \geq N,\\ 0 & \text{otherwise}, \end{cases} \\
        (\sigma_{\geq N}x)_i^j &= \begin{cases} x_i^j & i,j \geq N,\\ 0 & \text{otherwise}. \end{cases}
    \end{split}
\end{equation}
We also definre a twisted complex $(\sigma_{\leq N}X)^\bullet = ((\sigma_{\leq N}X)^i, (\sigma_{\leq N}x)_i^j)$ as follows:
\begin{equation}
    \begin{split}
        (\sigma_{\leq N}X)^i &= \begin{cases} X^i & i,j \leq N,\\ 0 & \text{otherwise}, \end{cases} \\
        (\sigma_{\leq N}x)_i^j &= \begin{cases} x_i^j & i,j \leq N,\\ 0 & \text{otherwise}. \end{cases}
    \end{split}
\end{equation}
\end{definition}
\begin{remark}
We shall sometimes use the following notations, for a given twisted complex $X^\bullet \in \Tw(\varcat A)$ and for integers $n \leq m$:
\begin{align*}
    X^\bullet_{\geq n} & =\sigma_{\geq n}X^\bullet, \\
    X^\bullet_{\leq m} &=\sigma_{\leq m} X^\bullet, \\
    X^\bullet_{[n,m]} &= \sigma_{\geq n} \sigma_{\leq m} X^\bullet = \sigma_{\leq m} \sigma_{\geq n} X^\bullet.
\end{align*}
\end{remark}
\begin{remark} \label{remark:brutaltruncations_Z0_functoriality}
If $f \colon X^\bullet \to Y^\bullet $ is a closed degree $0$ morphism in $\Tw(\varcat A)$, we have induced closed degree $0$ morphisms
\begin{align*}
\sigma_{\leq m} f &= f_{\leq m} \colon X^\bullet_{\leq m} \to Y^\bullet_{\leq m}, \\
\sigma_{\geq n} f &= f_{\geq n} \colon X^\bullet_{\geq n} \to Y^\bullet_{\geq n}, \\
\sigma_{\geq n} \sigma_{\leq m} f = \sigma_{\leq n} \sigma_{\geq m} f &= f_{[m,n]} \colon X^\bullet_{[n,m]} \to Y^\bullet_{[n,m]},
\end{align*}

Moreover, such brutal truncations are functorial in $Z^0(\Tw(\varcat A))$. Namely, we have identities:
\begin{align*}
    (gf)_{\leq m} &= g_{\leq m} f_{\leq m}, \qquad 1_{\leq m} = 1_{X^\bullet_{\leq m}}, \\
    (gf)_{\geq n} &= g_{\geq n} f_{\geq n}, \qquad 1_{\geq n} = 1_{X^\bullet_{\geq n}}, \\
    (gf)_{[n,m]} &= g_{[n,m]} f_{[n,m]}, \qquad 1_{[n,m]} = 1_{X^\bullet_{[n,m]}},
\end{align*}
whenever $f \colon X^\bullet \to Y^\bullet$ and $g \colon Y^\bullet \to Z^\bullet$ closed degree $0$ morphisms in $\Tw(\varcat A)$. The proof of the above identities is straightforward even if a little tedious, and is left to the reader.
\end{remark}
For all $N \in \mathbb Z$, there is a closed degree $0$ morphism
\begin{equation}
\begin{split}
    j_{N,N-1} & \colon (\sigma_{\geq N} X)^\bullet  \to (\sigma_{\geq N-1} X)^\bullet, \\
    (j_{N,N-1})_i^j &= \begin{cases} 1_{X^i} & i=j, \\ 0 & \text{otherwise}. \end{cases}
\end{split}
\end{equation}
Moreover, there is a closed degree $0$ morphism
\begin{equation} \label{equation:twistedcomplex_inclusions}
    \begin{split}
    j_N & \colon (\sigma_{\geq N} X)^\bullet  \to X^\bullet, \\
    (j_N)_i^j &= \begin{cases} 1_{X^i} & i=j, \\ 0 & \text{otherwise}. \end{cases}
\end{split}
\end{equation}

Analogously, for all $N \in \mathbb Z$ there is a closed degree $0$ morphism
\begin{equation}
\begin{split}
    p_{N,N-1} & \colon (\sigma_{\leq N} X)^\bullet  \to (\sigma_{\leq N-1} X)^\bullet, \\
    (p_{N,N-1})_i^j &= \begin{cases} 1_{X^i} & i=j, \\ 0 & \text{otherwise}. \end{cases}
\end{split}
\end{equation}
Moreover, there is a closed degree $0$ morphism
\begin{equation} \label{equation:twistedcomplex_projections}
    \begin{split}
    p_N & \colon X^\bullet  \to (\sigma_{\leq N} X)^\bullet, \\
    (p_N)_i^j &= \begin{cases} 1_{X^i} & i=j, \\ 0 & \text{otherwise}. \end{cases}
\end{split}
\end{equation}

We can picture the above morphisms with the following diagrams:
\begin{equation} \label{eq:inclusions_righttruncations}
\begin{tikzcd}[ampersand replacement=\&]
	{(\sigma_{\geq N} X)^\bullet} \&\& \cdots \& 0 \& {X^N} \& {X^{N+1}} \& \cdots \\
	{(\sigma_{\geq N-1} X)^\bullet} \& \cdots \& 0 \& {X^{N-1}} \& {X^N} \& {X^{N+1}} \& \cdots \\
	{X^\bullet} \& \cdots \& {X^{N-2}} \& {X^{N-1}} \& {X^N} \& {X^{N+1}} \& \cdots
	\arrow[from=1-5, to=1-6]
	\arrow[from=1-6, to=1-7]
	\arrow[from=2-4, to=2-5]
	\arrow[from=2-5, to=2-6]
	\arrow[from=2-6, to=2-7]
	\arrow[from=1-4, to=1-5]
	\arrow[from=1-3, to=1-4]
	\arrow[from=2-3, to=2-4]
	\arrow[from=2-2, to=2-3]
	\arrow[equal, from=1-5, to=2-5]
	\arrow[equal, from=1-6, to=2-6]
	\arrow[from=3-2, to=3-3]
	\arrow[from=3-3, to=3-4]
	\arrow[from=3-4, to=3-5]
	\arrow[from=3-5, to=3-6]
	\arrow[from=3-6, to=3-7]
	\arrow[equal, from=2-4, to=3-4]
	\arrow[equal, from=2-5, to=3-5]
	\arrow[equal, from=2-6, to=3-6]
	\arrow["{j_{N,N-1}}", from=1-1, to=2-1]
	\arrow["{j_{N-1}}", from=2-1, to=3-1]
	\arrow["{j_N}"', curve={height=30pt}, from=1-1, to=3-1]
\end{tikzcd}
\end{equation}
and
\begin{equation} \label{eq:projections_lefttruncations}
\begin{tikzcd}[ampersand replacement=\&]
	{X^\bullet} \& \cdots \& {X^{N-2}} \& {X^{N-1}} \& {X^N} \& {X^{N+1}} \& \cdots \\
	{(\sigma_{\leq N}X)^\bullet} \& \cdots \& {X^{N-2}} \& {X^{N-1}} \& {X^N} \& 0 \& \cdots \\
	{(\sigma_{\leq N-1}X)^\bullet} \& \cdots \& {X^{N-2}} \& {X^{N-1}} \& 0 \& \cdots
	\arrow[from=1-3, to=1-4]
	\arrow[from=1-4, to=1-5]
	\arrow[from=1-2, to=1-3]
	\arrow[from=1-5, to=1-6]
	\arrow[from=1-6, to=1-7]
	\arrow[from=2-3, to=2-4]
	\arrow[from=2-4, to=2-5]
	\arrow[from=2-5, to=2-6]
	\arrow[from=2-2, to=2-3]
	\arrow[from=2-6, to=2-7]
	\arrow[from=3-2, to=3-3]
	\arrow[from=3-3, to=3-4]
	\arrow[from=3-4, to=3-5]
	\arrow[from=3-5, to=3-6]
	\arrow[equal, from=1-3, to=2-3]
	\arrow[equal, from=2-3, to=3-3]
	\arrow[equal, from=1-4, to=2-4]
	\arrow[equal, from=2-4, to=3-4]
	\arrow[equal, from=1-5, to=2-5]
	\arrow["{p_N}", from=1-1, to=2-1]
	\arrow["{p_{N,N-1}}", from=2-1, to=3-1]
	\arrow["{p_{N-1}}"', curve={height=24pt}, from=1-1, to=3-1]
\end{tikzcd}
\end{equation}
It is immediate to see that
\begin{align*}
    j_N & = j_{N-1} \circ j_{N,N-1}, \\
    p_{N-1} &= p_{N,N-1} \circ p_N,
\end{align*}
for all $N \in \mathbb Z$.

We now go on to check that any twisted complex $X^\bullet$ is both direct limit of $((\sigma_{\geq -k}X)^\bullet \xrightarrow{j_{-k,-k-1}} (\sigma_{\geq -k-1}X)^\bullet))_k$ and inverse limit of $((\sigma_{\leq k} X)^\bullet \xrightarrow{p_{k,k-1}} (\sigma_{\leq k-1}X)^\bullet))_k$. We are actually going to prove a stronger statement:
\begin{proposition} \label{prop:brutaltrunc_limitcolimit}
Let $X^\bullet \in \Tw(\varcat A)$ be a twisted complex. Let $Z^\bullet \in \Tw(\varcat A)$ be any twisted complex. Then, there are short exact sequences of chain complexes
\begin{align*}
    0 \to \Tw(\varcat A)(X^\bullet, Z^\bullet) \xrightarrow{{(j_{-k}^*)}_k} \prod_{k \geq 0} \Tw(\varcat A)((\sigma_{\geq -k} X)^\bullet, Z^\bullet) \xrightarrow{1-\mu} \prod_{k \geq 0} \Tw(\varcat A)((\sigma_{\geq -k} X)^\bullet, Z^\bullet) \to 0, \\
    0 \to \Tw(\varcat A)(Z^\bullet, X^\bullet) \xrightarrow{({p_k}_*)_k} \prod_{k\geq 0} \Tw(\varcat A)(Z^\bullet, (\sigma_{\leq k} X)^\bullet) \xrightarrow{1-\mu'} \prod_{k \geq 0} \Tw(\varcat A)(Z^\bullet, (\sigma_{\leq k} X)^\bullet) \to 0,
\end{align*}
natural in $Z^\bullet$. The morphisms $\mu$ and $\mu'$ are defined by:
\begin{align*}
    \mu((f_{-k})_k) &= (f_{-k-1} \circ j_{-k,-k-1})_k, \\
    \mu'((g_k)_k) &= (p_{k+1,k} \circ f_{k+1})_k.
\end{align*}
\end{proposition}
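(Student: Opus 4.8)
The plan is to recognize each of the two sequences as the standard Milnor-type sequence attached to a tower of chain complexes with degreewise surjective transition maps; the whole content is then the identification of $\ker(1-\mu)$ (resp. $\ker(1-\mu')$) with $\Tw(\varcat A)(X^\bullet, Z^\bullet)$ (resp. $\Tw(\varcat A)(Z^\bullet, X^\bullet)$) via $(j_{-k}^*)_k$ (resp. $(p_{k*})_k$), together with the verification that the transition maps are surjective. I would prove the first sequence in detail; the second is the mirror image (alternatively it follows from the first applied to $\opp{\varcat A}$, using \eqref{equation:twistedcomplexes_opposites}).

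First, unwinding the definitions $(j_N)_i^j = \delta_{ij}1_{X^i}$ and $(j_{N,N-1})_i^j = \delta_{ij}1_{X^i}$, one sees that $j_{-k}^*\colon \Tw(\varcat A)(X^\bullet, Z^\bullet) \to \Tw(\varcat A)((\sigma_{\geq -k}X)^\bullet, Z^\bullet)$ is the restriction $(f_i^j)_{i,j} \mapsto (f_i^j)_{i \geq -k}$, and $j_{-k,-k-1}^*$ is restriction of a family supported in $\{i \geq -k-1\}$ to one supported in $\{i \geq -k\}$; the relation $j_{-k} = j_{-k-1}\circ j_{-k,-k-1}$ gives $j_{-k}^* = j_{-k,-k-1}^* \circ j_{-k-1}^*$, whence $(1-\mu)\circ (j_{-k}^*)_k = 0$. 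Injectivity of $(j_{-k}^*)_k$ is immediate, since a morphism all of whose restrictions to $\{i \geq -k\}$ vanish is zero. For exactness in the middle, let $(f_{-k})_k$ satisfy $f_{-k} = j_{-k,-k-1}^*(f_{-k-1})$ for all $k$; then for fixed $i,j$ the component $(f_{-k})_i^j$ is independent of $k$ once $-k \leq i$, so one may set $f_i^j := (f_{-k})_i^j$. Here the key point is that a degree $p$ morphism of twisted complexes is \emph{nothing but} a family $(f_i^j)$ with the one-sidedness support condition $f_i^j = 0$ for $i-j+p>0$, with no accompanying equation; this condition is inherited from the $f_{-k}$, so $f \in \Tw(\varcat A)(X^\bullet, Z^\bullet)$, and by construction $j_{-k}^*f = f_{-k}$. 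Thus $\ker(1-\mu) = \Img((j_{-k}^*)_k)$, and since every map in sight is (pre)composition with a fixed closed degree $0$ morphism, hence a chain map, $(j_{-k}^*)_k$ is an isomorphism of chain complexes onto the subcomplex $\ker(1-\mu)$.

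It remains to see that $1-\mu$ is surjective, and by the argument of Proposition \ref{proposition:lim1_vanishing} (ultimately \cite[Lemma 3.53]{weibel-homological}) it suffices that each transition map $j_{-k,-k-1}^*$ be surjective in every degree. This is exactly \emph{extension by zero}: given a degree $p$ family $(g_i^j)_{i \geq -k}$ obeying one-sidedness, adjoining $g_{-k-1}^j := 0$ for all $j$ again obeys one-sidedness, hence defines a morphism $(\sigma_{\geq -k-1}X)^\bullet \to Z^\bullet$ restricting to it — and this step is licit precisely because morphisms of twisted complexes are unconstrained beyond their support. One then solves $(1-\mu)((f_{-k})_k) = (g_{-k})_k$ by the usual recursion ($f_0$ arbitrary, $f_{-k-1}$ any $j_{-k,-k-1}^*$-preimage of $f_{-k}-g_{-k}$). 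Naturality in $Z^\bullet$ is clear, postcomposition commuting with all the precompositions in play. Finally the second sequence is obtained by the same steps with sources and targets swapped: $p_{k*}$ restricts $(g_i^j)$ to the components with $j \leq k$, the transition maps $p_{k+1,k*}$ are surjective by adjoining zero components at target index $k+1$, a compatible family glues to $g_i^j := (g_k)_i^j$ (any $k \geq j$), and surjectivity of $1-\mu'$ is again Proposition \ref{proposition:lim1_vanishing}. The only genuinely load-bearing observation, used throughout, is that brutal truncation of morphisms makes sense and is functorial (Remark \ref{remark:brutaltruncations_Z0_functoriality}) because a twisted-complex morphism carries no equation, so there is no obstruction either to truncating it or to gluing a compatible family of truncations.
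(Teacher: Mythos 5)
Your proof is correct and follows essentially the same route as the paper: the gluing/uniqueness statements are verified directly from the fact that a morphism of twisted complexes is just a support-constrained family of components, and surjectivity of $1-\mu$, $1-\mu'$ is reduced via Proposition \ref{proposition:lim1_vanishing} to degreewise surjectivity of the transition maps. Your ``extension by zero'' is precisely the paper's precomposition with the (non-closed) degree $0$ section $s_{-k-1,-k}$ (resp.\ postcomposition with $i_{k,k+1}$), so the two arguments coincide.
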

\begin{proof}
Every claim, save for the surjectivity of $1-\mu$ and $1-\mu'$, follows from the following two facts:
\begin{itemize}
    \item For any sequence of (degree $p$) morphisms $f_{-k} \colon (\sigma_{\geq -k}X)^\bullet \to Z^\bullet$ such that $f_{-k} = f_{-k-1} \circ j_{-k,-k-1}$, there is a unique (degree $p$) morphism $f \colon X^\bullet \to Z^\bullet$, such that $f \circ j_{-k}=f_{-k}$ for all $k$.
    \item For any sequence of (degree $p$) morphisms $g_k \colon Z^\bullet \to (\sigma_{\leq k}X)^\bullet$ such that $p_{k+1,k} \circ g_k = g_{k+1}$, there is a unique (degree $p$) morphism $g \colon Z^\bullet \to X^\bullet$, such that $p_k \circ g = g_k$ for all $k$.
\end{itemize}
Both facts can be proved directly.

Now, we prove surjectivity of $1-\mu$ and $1-\mu'$. Recalling Proposition \ref{proposition:lim1_vanishing}, this follows by checking that the morphisms
\begin{align*}
    \Tw(\varcat A)((\sigma_{\geq -k-1} X)^\bullet, Z^\bullet) & \xrightarrow{j_{-k,-k-1}^*} \Tw(\varcat A)((\sigma_{\geq -k} X)^\bullet, Z^\bullet), \\
    \Tw(\varcat A)(Z^\bullet,(\sigma_{\leq k+1} X)^\bullet) & \xrightarrow{{p_{k+1,k}}_*} \Tw(\varcat A)(Z^\bullet,(\sigma_{\leq k} X)^\bullet).
\end{align*}
are surjective for all $k \geq 0$. This follows once we see that $j_{-k,-k-1}$ has a degree $0$ (not necessarily closed) left inverse $s_{-k-1,-k}$, and $p_{k+1,k}$ has a degree $0$ (not closed) right inverse $i_{k,k+1}$:
\begin{align*}
    s_{-k-1,-k} \circ j_{-k,-k-1} &= 1, \\
    p_{k+1,k} \circ i_{k,k+1} &=1.
\end{align*}
The definitions of $s_{-k-1,-k}$ and $p_{k+1,k}$ are clear after contemplation of diagrams \eqref{eq:inclusions_righttruncations} and \eqref{eq:projections_lefttruncations}. Precomposition with $s_{-k-1,-k}$ and postcomposition with $i_{k,k+1}$ yield right inverses of respectively $j_{-k,-k-1}^*$ and ${p_{k+1,k}}_*$.
\end{proof}
\begin{corollary} \label{corollary:brutaltrunc_limitcolimit}
In the setup of the above Proposition \ref{prop:brutaltrunc_limitcolimit}, we have isomorphisms:
\begin{align}
    \Tw(\varcat A)(X^\bullet, Z^\bullet) & \xrightarrow{\approx} \varprojlim_{k \geq 0} \Tw(\varcat A)((\sigma_{\geq -k} X)^\bullet, Z^\bullet), \\
    \Tw(\varcat A)(Z^\bullet, X^\bullet) & \xrightarrow{\approx} \varprojlim_{k \geq 0} \Tw(\varcat A)(Z^\bullet,(\sigma_{\leq k} X)^\bullet).
\end{align}
Hence, we may write
\begin{align*}
    X^\bullet & \cong \varinjlim_k (\sigma_{\geq -k} X)^\bullet, \\
    X^\bullet & \cong \varprojlim_k (\sigma_{\leq k} X)^\bullet.
\end{align*}

Moreover, we have quasi-isomorphisms:
\begin{align}
    \Tw(\varcat A)(X^\bullet, Z^\bullet) & \xrightarrow{\approx} \holim_{k \geq 0} \Tw(\varcat A)((\sigma_{\geq -k} X)^\bullet, Z^\bullet), \\
    \Tw(\varcat A)(Z^\bullet, X^\bullet) & \xrightarrow{\approx} \holim_{k \geq 0} \Tw(\varcat A)(Z^\bullet,(\sigma_{\leq k} X)^\bullet),
\end{align}
natural in $Z^\bullet \in \Tw(\varcat A)$. These quasi-isomorphisms exhibit $X^\bullet$ as the following homotopy limit or colimit:
\begin{align*}
    X^\bullet & \cong \hocolim_k (\sigma_{\geq -k} X)^\bullet, \\
    X^\bullet & \cong \holim_k (\sigma_{\leq k} X)^\bullet.
\end{align*}
\end{corollary}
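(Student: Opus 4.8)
The plan is to deduce the whole statement from Proposition~\ref{prop:brutaltrunc_limitcolimit} together with the comparison results for sequential (homotopy) limits and colimits recalled in \S\ref{subsection:hocolim}; no genuinely new computation should be needed.

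First I would read off the two ordinary-limit isomorphisms. For a fixed $Z^\bullet$, the left-hand part of the first short exact sequence in Proposition~\ref{prop:brutaltrunc_limitcolimit} says exactly that ${(j_{-k}^*)}_k$ identifies $\Tw(\varcat A)(X^\bullet,Z^\bullet)$ with the kernel of $1-\mu$ on $\prod_{k\geq 0}\Tw(\varcat A)((\sigma_{\geq -k}X)^\bullet,Z^\bullet)$, which is by definition $\varprojlim_k \Tw(\varcat A)((\sigma_{\geq -k}X)^\bullet,Z^\bullet)$; dually for the second sequence. Since both identifications are natural in $Z^\bullet$, they upgrade to isomorphisms of the relevant (co)representable $\Tw(\varcat A)$-dg-modules, and by the very definition of the limit (resp. colimit) of a sequence in a dg-category given in \S\ref{subsection:hocolim} — applied to $\Tw(\varcat A)$ and to $\opp{\Tw(\varcat A)}$ — these are precisely the identifications $X^\bullet \cong \varprojlim_k(\sigma_{\leq k}X)^\bullet$ and $X^\bullet \cong \varinjlim_k(\sigma_{\geq -k}X)^\bullet$.

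Next I would handle the homotopy-limit statements using the surjectivity of $1-\mu$ and $1-\mu'$ asserted in the same proposition. Each short exact sequence of hom-complexes then yields a distinguished triangle in $\dercomp(\basering k)$ of the shape
\[
\varprojlim_k \Tw(\varcat A)((\sigma_{\geq -k}X)^\bullet,Z^\bullet) \to \prod_{k\geq 0}\Tw(\varcat A)((\sigma_{\geq -k}X)^\bullet,Z^\bullet) \xrightarrow{1-\mu} \prod_{k\geq 0}\Tw(\varcat A)((\sigma_{\geq -k}X)^\bullet,Z^\bullet),
\]
which is exactly the triangle defining $\holim_k$; this is the mechanism of Corollary~\ref{corollary:lim1_vanishing}, now applied to hom-complexes instead of to representables. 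Composing the ordinary-limit isomorphisms of the previous step with the canonical quasi-isomorphism $\varprojlim_k \xrightarrow{\approx} \holim_k$ gives the four asserted quasi-isomorphisms, natural in $Z^\bullet$, and unwinding the definition of the homotopy colimit (resp. homotopy limit) of a sequence in $\Tw(\varcat A)$ from \S\ref{subsection:hocolim} turns these into $X^\bullet \cong \hocolim_k(\sigma_{\geq -k}X)^\bullet$ and $X^\bullet \cong \holim_k(\sigma_{\leq k}X)^\bullet$.

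I do not expect a serious obstacle: the substance is already contained in Proposition~\ref{prop:brutaltrunc_limitcolimit} and Corollary~\ref{corollary:lim1_vanishing}. The only thing to watch is the bookkeeping of the variance — $(\sigma_{\geq -k}X)^\bullet$ grows with $k$, so $\Tw(\varcat A)((\sigma_{\geq -k}X)^\bullet,-)$ is an honest inverse system whose (homotopy) limit is what must match the colimit $\varinjlim_k(\sigma_{\geq -k}X)^\bullet=X^\bullet$, and similarly for the family $(\sigma_{\leq k}X)^\bullet$ with its projections $p_{k+1,k}$ — together with the harmless remark that the ambient category for all these limits of hom-complexes is $\compdg(\basering k)$, so the relevant limits and homotopy limits all exist.
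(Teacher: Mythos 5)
Your proposal is correct and follows essentially the same route as the paper: the ordinary-limit identifications are read off from the left exactness of the two sequences in Proposition \ref{prop:brutaltrunc_limitcolimit}, and the homotopy-limit statements follow because the surjectivity of $1-\mu$ and $1-\mu'$ makes each short exact sequence into a distinguished triangle identifying $\varprojlim_k$ with $\cone(1-\mu)[-1]\cong\holim_k$, exactly as in Corollary \ref{corollary:lim1_vanishing}. No gap.
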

\begin{proof}
The first part follows from the left exactness of the sequences of Proposition \ref{prop:brutaltrunc_limitcolimit}.

Let us deal with the second part. By definition,
\begin{align*}
\holim_{k \geq 0} \Tw(\varcat A)((\sigma_{\geq -k} X)^\bullet, Z^\bullet) & \cong \cone(1-\mu)[-1], \\
\holim_{k \geq 0} \Tw(\varcat A)(Z^\bullet,(\sigma_{\leq k} X)^\bullet) & \cong \cone(1-\mu')[-1].
\end{align*}
We conclude by recalling that short exact sequences of complexes yield distinguished triangles in the derived category.
\end{proof}
\begin{corollary} \label{corollary:truncations_morphisms_colimits}
Let $f \colon X^\bullet \to Y^\bullet$ be a closed degree $0$ morphism in $\Tw(\varcat A)$. Then, we can recover $f$ by taking limits or colimits along its truncations:
\begin{align*}
    f &= \varinjlim_n f_{\geq -n}, \\
    f &= \varprojlim_n f_{\leq n},
\end{align*}
where $f_{\geq n}$ and $f_{\leq -n}$ are the brutal truncations of $f$, see also Remark \ref{remark:brutaltruncations_Z0_functoriality}.
\end{corollary}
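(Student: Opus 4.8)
The plan is to deduce the statement formally from Corollary \ref{corollary:brutaltrunc_limitcolimit} together with the $Z^0$-functoriality of brutal truncations recorded in Remark \ref{remark:brutaltruncations_Z0_functoriality}. I will treat the colimit formula $f = \varinjlim_n f_{\geq -n}$ in detail; the limit formula $f = \varprojlim_n f_{\leq n}$ follows by the dual argument (replacing the inclusions $j_{-n}$ by the projections $p_n$), or equivalently by passing to $\opp{\varcat A}$ through the identifications \eqref{equation:twistedcomplexes_opposites}.

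First I would make the assertion precise: the brutal truncations $f_{\geq -n} \colon X^\bullet_{\geq -n} \to Y^\bullet_{\geq -n}$ constitute a morphism between the two inductive systems $(X^\bullet_{\geq -n}, j_{-n,-n-1})_{n \geq 0}$ and $(Y^\bullet_{\geq -n}, j'_{-n,-n-1})_{n \geq 0}$, i.e. the squares
\[
\begin{tikzcd}[ampersand replacement=\&]
	{X^\bullet_{\geq -n}} \& {Y^\bullet_{\geq -n}} \\
	{X^\bullet_{\geq -n-1}} \& {Y^\bullet_{\geq -n-1}}
	\arrow["{f_{\geq -n}}", from=1-1, to=1-2]
	\arrow["{j_{-n,-n-1}}"', from=1-1, to=2-1]
	\arrow["{j'_{-n,-n-1}}", from=1-2, to=2-2]
	\arrow["{f_{\geq -n-1}}"', from=2-1, to=2-2]
\end{tikzcd}
\]
strictly commute. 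This is a direct componentwise check: both composites have $(i \to j)$-component equal to $f_i^j$ when $i \geq -n$, and $0$ otherwise; the apparent mismatch for $i \geq -n$ and $j < -n$ disappears because $f_i^j = 0$ there by one-sidedness of the degree $0$ morphism $f$. Postcomposing with the canonical maps to $Y^\bullet$ then shows that $(j'_{-n} \circ f_{\geq -n})_n$ is a compatible family for the inductive system defining $X^\bullet$, and that $f \circ j_{-n} = j'_{-n} \circ f_{\geq -n}$ for every $n$ — again by a componentwise comparison using one-sidedness.

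Next I would invoke the universal property isolated in the first two bullet points of the proof of Proposition \ref{prop:brutaltrunc_limitcolimit}: applied with target $Z^\bullet = Y^\bullet$ and data $(j'_{-n}\circ f_{\geq -n})_n$, it produces a unique morphism $X^\bullet \to Y^\bullet$ whose composite with each $j_{-n}$ equals $j'_{-n} \circ f_{\geq -n}$; by the previous paragraph this morphism is $f$ itself, and under the identifications $X^\bullet \cong \varinjlim_n X^\bullet_{\geq -n}$, $Y^\bullet \cong \varinjlim_n Y^\bullet_{\geq -n}$ of Corollary \ref{corollary:brutaltrunc_limitcolimit} it is, by definition, $\varinjlim_n f_{\geq -n}$. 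Hence $f = \varinjlim_n f_{\geq -n}$. The dual computation, now using the left-exact sequences of Proposition \ref{prop:brutaltrunc_limitcolimit} for $\Tw(\varcat A)(Z^\bullet, -)$ and the projections $p_n \colon X^\bullet \to X^\bullet_{\leq n}$, gives $f = \varprojlim_n f_{\leq n}$.

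The one step that needs genuine, if routine, care — and so the main obstacle — is the bookkeeping in the componentwise checks above: one must keep track of the one-sidedness constraints $f_i^j = 0$ for $i - j > 0$ in order to see that the components of $f$ lying outside the relevant truncation window vanish automatically, so that the truncation squares and the triangles $f \circ j_{-n} = j'_{-n} \circ f_{\geq -n}$ commute on the nose rather than merely after discarding those components.
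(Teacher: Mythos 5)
Your argument is correct and is essentially the paper's own proof: the paper simply displays the strictly commutative diagrams showing that the truncations $f_{\geq -n}$ (resp.\ $f_{\leq n}$) are compatible with the structure maps $j_{-n,-n-1}$, $j_{-n}$ (resp.\ $p_{n+1,n}$, $p_n$) and concludes from the universal property established in Proposition \ref{prop:brutaltrunc_limitcolimit}. Your explicit componentwise verification, using that $f_i^j=0$ for $i>j$ by one-sidedness of a degree $0$ morphism, is exactly the bookkeeping the paper leaves implicit.
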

\begin{proof}
By definition of $f_{\geq -n}$ and $f_{\leq n}$, we have (strictly) commutative diagrams:
\[
\begin{tikzcd}[ampersand replacement=\&]
	{X^\bullet_{\leq -n}} \& {X^\bullet_{\leq -n-1}} \& {X^\bullet} \\
	{Y^\bullet_{\leq -n}} \& {Y^\bullet_{\leq -n-1}} \& {Y^\bullet,}
	\arrow["{j_{-n-1}}"', from=1-2, to=1-3]
	\arrow["{f_{\leq -n-1}}"', from=1-2, to=2-2]
	\arrow["{j_{-n-1}}", from=2-2, to=2-3]
	\arrow["f", from=1-3, to=2-3]
	\arrow["{j_{-n,-n-1}}"', from=1-1, to=1-2]
	\arrow["{j_{-n,-n-1}}", from=2-1, to=2-2]
	\arrow["{f_{\leq -n}}"', from=1-1, to=2-1]
	\arrow["{j_n}", curve={height=-12pt}, from=1-1, to=1-3]
	\arrow["{j_n}"', curve={height=12pt}, from=2-1, to=2-3]
\end{tikzcd}
\qquad
\begin{tikzcd}[ampersand replacement=\&]
	{X^\bullet} \& {X^\bullet_{\geq n+1}} \& {X^\bullet_{\geq n}} \\
	{Y^\bullet} \& {Y^\bullet_{\geq n+1}} \& {Y^\bullet_{\geq n}.}
	\arrow["{p_{n+1,n}}"', from=1-2, to=1-3]
	\arrow["{f_{\geq -n}}"', from=1-2, to=2-2]
	\arrow["{p_{n+1,n}}", from=2-2, to=2-3]
	\arrow["{f_{\geq -n-1}}", from=1-3, to=2-3]
	\arrow["{p_{n+1}}"', from=1-1, to=1-2]
	\arrow["{p_{n+1}}", from=2-1, to=2-2]
	\arrow["f"', from=1-1, to=2-1]
	\arrow["{p_n}", curve={height=-12pt}, from=1-1, to=1-3]
	\arrow["{p_n}"', curve={height=12pt}, from=2-1, to=2-3] 
\end{tikzcd} \qedhere
\]
\end{proof}

We can use the brutal truncations to recover any twisted complex as the cone of a suitable morphism. First, we discuss how to compute cones of morphisms of twisted complexes in general.
\begin{lemma} \label{lemma:mappingcone_twistedcomplexes}
Let $\varcat A$ be a dg-category. Let $f \colon X^\bullet = (X^i, x_i^j) \to (Y^i, x_i^j) = Y^\bullet$ be a closed degree $0$ morphism in $\Tw(\varcat A)$. Assume that the (strict) direct sums $X^{i+1} \oplus Y^i$ exist in $\varcat A$. Then, we may define the twisted complex $\cone(f)^\bullet = (\cone(f)^i, c(f)_i^j)$ as follows:
\begin{equation} \label{eq:mappingcone_twistedcomplex}
    \begin{split}
        \cone(f)^i & = X^{i+1} \oplus Y^i, \\
        c(f)_i^j & = \begin{psmallmatrix}
        -x_{i+j+1}^{i+j+1} & 0 \\ f_{i+1}^j & y_i^j
        \end{psmallmatrix}.
    \end{split}
\end{equation}
The twisted complex $\cone(f)^\bullet$ is the cone of $f$, and it fits in the following pretriangle
\[
\begin{tikzcd}[ampersand replacement=\&]
	X^\bullet \& Y^\bullet \& {\cone(f)} \& {X^\bullet[1],}
	\arrow["f", from=1-1, to=1-2]
	\arrow["p", shift left=1, from=1-3, to=1-4]
	\arrow["j", shift left=1, from=1-2, to=1-3]
	\arrow["s", shift left=1, from=1-3, to=1-2]
	\arrow["i", shift left=1, from=1-4, to=1-3]
\end{tikzcd}
\]
where the morphisms $i,p,j,s$ are defined to be componentwise the canonical morphisms associated to the biproduct $X^\bullet[1] \oplus Y^\bullet$, for example the only non-zero components of $p$ are the $p^i_i$, given by the natural projections
\[
p^i_i \colon X^{i+1} \oplus Y^i \to X^{i+1}.
\]
\end{lemma}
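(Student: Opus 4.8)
The plan is to verify, directly and componentwise, that the displayed data defines a twisted complex and that the componentwise biproduct structure maps satisfy the structural identities which, by the discussion in \S\ref{subsec:pretr_dgcat}, characterise the cone of $f$ and the associated pretriangle; nothing deeper than careful sign bookkeeping is involved. So I would proceed in two verifications followed by an appeal to the general framework.

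First I would check that $\cone(f)^\bullet$ is a twisted complex. Writing $c(f)_i^j$ in block form with respect to $\cone(f)^i = X^{i+1}\oplus Y^i = X^\bullet[1]^i \oplus Y^i$, its diagonal blocks are the twisted differentials of the shifted complex $X^\bullet[1]$ (recall $x[1]_i^j = -x_{i+1}^{j+1}$ from \eqref{eq:twistedcomplex_shift}) and of $Y^\bullet$, the upper-right block is zero, and the lower-left block is the family $f_{i+1}^j$. Expanding the twisted complex equation $(-1)^j d\,c(f)_i^j + \sum_k c(f)_k^j c(f)_i^k = 0$ blockwise, the upper-left component is exactly the twisted complex equation for $X^\bullet[1]$ (which holds because a shift of a twisted complex is again a twisted complex), the lower-right component is the twisted complex equation for $Y^\bullet$, the upper-right component is the trivial identity $0=0$, and — after reindexing the internal sum and matching signs — the lower-left component is precisely the closedness condition $(df)_{i+1}^j = 0$. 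Hence $\cone(f)^\bullet \in \Tw(\varcat A)$; one-sidedness of $c(f)$ is automatic since each block vanishes when $i-j+1>0$.

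Next I would treat the structure maps. Since each $\cone(f)^i$ is a biproduct in $\varcat A$, every morphism out of or into $\cone(f)^\bullet$ decomposes componentwise and naturally into a pair of morphisms involving $X^\bullet[1]$ and $Y^\bullet$; hence the componentwise canonical maps $i\colon X^\bullet[1]\to\cone(f)^\bullet$, $j\colon Y^\bullet\to\cone(f)^\bullet$, $p\colon\cone(f)^\bullet\to X^\bullet[1]$, $s\colon\cone(f)^\bullet\to Y^\bullet$ are degree $0$ morphisms of twisted complexes exhibiting $\cone(f)^\bullet$ as the biproduct $X^\bullet[1]\oplus Y^\bullet$ in the underlying graded category of $\Tw(\varcat A)$. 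It then remains to compute $dj,dp,di,ds$ from the formula for the differential of a morphism of twisted complexes together with the block form of $c(f)$: the diagonal blocks cancel, giving $dj=0$ and $dp=0$, while the single nonzero block $f_{i+1}^j$ yields $di = j\,f\,1_{(X^\bullet,1,0)}$ and $ds = -f\,1_{(X^\bullet,1,0)}\,p$, where $1_{(X^\bullet,1,0)}\colon X^\bullet[1]\to X^\bullet$ is the shifted identity. These are exactly the equations \eqref{equation:pretriangle} demands of the data of a pretriangle, so by the discussion in \S\ref{subsec:pretr_dgcat} the object $\cone(f)^\bullet$ with $i,p,j,s$ is a cone of $f$ and sits in the pretriangle $X^\bullet\xrightarrow{f}Y^\bullet\xrightarrow{j}\cone(f)^\bullet\xrightarrow{p}X^\bullet[1]$, as claimed.

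The only real obstacle is sign bookkeeping: reconciling the $(-1)^n$ in the shift convention \eqref{eq:twistedcomplex_shift}, the $(-1)^j$ in the twisted complex equation, and the $(-1)^p$ in the differential of a degree $p$ morphism, so that the lower-left block of the twisted complex equation for $\cone(f)^\bullet$ lands on $(df)=0$ exactly and the off-diagonal contributions to $di$ and $ds$ come out with the signs prescribed by \eqref{equation:pretriangle} (in particular matching the chosen sign convention for the shifted identities $1_{(X^\bullet,n,m)}$). This is routine but must be done carefully; everything else is formal.
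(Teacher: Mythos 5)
Your proposal is correct and is exactly the computation the paper has in mind: the paper's proof of this lemma is simply ``a direct computation which is left to the reader,'' and your blockwise verification of the Maurer--Cartan equation (with the lower-left block reducing to $(df)_{i+1}^{j}=0$) together with the check of the four structural identities $dj=0$, $dp=0$, $di=jf1_{(X^\bullet,1,0)}$, $ds=-f1_{(X^\bullet,1,0)}p$ is precisely that computation carried out. You also correctly read the upper-left entry of $c(f)_i^j$ as the shifted twisted differential $x[1]_i^j=-x_{i+1}^{j+1}$, which is what the statement's formula is meant to say.
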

\begin{proof}
This is a direct computation which is left to the reader.
\end{proof}

Let $X^\bullet=(X^i,x_i^j)$ be an object in $\Tw(\varcat A)$. For all $n \in \mathbb Z$, we may use the $x_i^j$ to define a closed degree $0$ morphism:
\begin{align*}
\tilde{x}  \colon (\sigma_{\leq n-1}X^\bullet)  [-1]  &\to \sigma_{\geq n} X^\bullet, \\
\tilde{x}_i^j &= x_{i-1}^j.
\end{align*}

\begin{proposition} \label{prop:pretriangle_cotstruct}
For any twisted complex $X^\bullet$ there is a pretriangle in $\Tw(\varcat A)$:
\begin{equation} \label{eq:pretriangle_cotstruct}
    (\sigma_{\leq n-1}X^\bullet)  [-1]  \xrightarrow{\tilde{x}} \sigma_{\geq n} X^\bullet \xrightarrow{j_n} X^\bullet \xrightarrow{p_{n-1}} \sigma_{\leq n-1}X^\bullet,
\end{equation}
where $j_n$ and $p_n$ are described in \eqref{eq:inclusions_righttruncations} and \eqref{eq:projections_lefttruncations}.

Moreover, if $f \colon X^\bullet \to Y^\bullet$ is a closed degree $0$ morphism in $\Tw(\varcat A)$, there is a degree $-1$ morphism
\[
h \colon (\sigma_{\leq n-1}X^\bullet)  [-1] \to \sigma_{\geq n} Y^\bullet
\]
which fits in the following diagram:
\[\begin{tikzcd}[ampersand replacement=\&]
	{(\sigma_{\leq n-1}X^\bullet)[-1]} \& {\sigma_{\geq n}X^\bullet} \& {X^\bullet} \& {\sigma_{\leq n-1}X^\bullet} \\
	{(\sigma_{\leq n-1}Y^\bullet)[-1]} \& {\sigma_{\geq n}Y^\bullet} \& {Y^\bullet} \& {\sigma_{\leq n-1}Y^\bullet,}
	\arrow["{\bar{x}}", from=1-1, to=1-2]
	\arrow["{j_n}", from=1-2, to=1-3]
	\arrow["{p_{n-1}}", from=1-3, to=1-4]
	\arrow["{f_{\leq n-1}}", from=1-4, to=2-4]
	\arrow["f", from=1-3, to=2-3]
	\arrow["{p_{n-1}}"', from=2-3, to=2-4]
	\arrow["{f_{\geq n}}"', from=1-2, to=2-2]
	\arrow["{j_n}"', from=2-2, to=2-3]
	\arrow["{f_{\leq n-1}[-1]}"', from=1-1, to=2-1]
	\arrow["{\bar{y}}"', from=2-1, to=2-2]
	\arrow["h"{description}, dashed, from=1-1, to=2-2]
\end{tikzcd}\]
where $f_{\geq n}$ and $f_{\leq n-1}$ are described in Remark \ref{remark:brutaltruncations_Z0_functoriality}. The middle and rightmost squares of the diagram are strictly commutative; the left square is commutative in $H^0(\Tw(\varcat A))$ up to $dh$.
\end{proposition}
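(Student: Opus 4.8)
The plan is to deduce both assertions from Lemma~\ref{lemma:mappingcone_twistedcomplexes}, the remaining work being bookkeeping on indices and signs.

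\emph{The pretriangle \eqref{eq:pretriangle_cotstruct}.} I would apply Lemma~\ref{lemma:mappingcone_twistedcomplexes} to the closed degree $0$ morphism $\tilde{x}\colon(\sigma_{\leq n-1}X^\bullet)[-1]\to\sigma_{\geq n}X^\bullet$. The direct sums required by that lemma are $((\sigma_{\leq n-1}X)[-1])^{i+1}\oplus(\sigma_{\geq n}X)^i=(\sigma_{\leq n-1}X)^i\oplus(\sigma_{\geq n}X)^i$, and for every $i\in\mathbb Z$ exactly one of the two summands is a strict zero object; hence this direct sum exists in $\varcat A$ and may be taken to be $X^i$ itself (with the identity and zero maps as structure morphisms), so that in particular we do \emph{not} need $\Tw(\varcat A)$ to be strongly pretriangulated. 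With this choice $\cone(\tilde{x})^i=X^i$, and computing the twisted differential $c(\tilde{x})_i^j$ from \eqref{eq:mappingcone_twistedcomplex} --- using the shift rule \eqref{eq:twistedcomplex_shift}, the definition $\tilde{x}_i^j=x_{i-1}^j$, and the one-sidedness $x_i^j=0$ for $i\ge j$ --- identifies $c(\tilde{x})_i^j$ with $x_i^j$ exactly. Thus $\cone(\tilde{x})^\bullet=X^\bullet$, and tracing through the canonical morphisms $j$ and $p$ of the pretriangle furnished by Lemma~\ref{lemma:mappingcone_twistedcomplexes} shows that under this identification they become precisely the morphisms $j_n$ of \eqref{eq:inclusions_righttruncations} and $p_{n-1}$ of \eqref{eq:projections_lefttruncations}. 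This is \eqref{eq:pretriangle_cotstruct}.

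\emph{The morphism $h$.} Given a closed degree $0$ morphism $f\colon X^\bullet\to Y^\bullet$, I would first check the strict commutativity of the middle and rightmost squares directly: one-sidedness forces $f_i^j=0$ for $i>j$, and then a one-line computation with the components of $j_n$ and $p_{n-1}$ gives $f\circ j_n=j_n\circ f_{\geq n}$ and $p_{n-1}\circ f=f_{\leq n-1}\circ p_{n-1}$. Next, identifying $X^\bullet$ and $Y^\bullet$ with $\cone(\tilde{x})^\bullet$ and $\cone(\tilde{y})^\bullet$ as above (so $\tilde{y}$ is the map written $\bar y$ in the statement), I would write out the matrix of $f$ with respect to the biproduct decompositions $\cone(\tilde{x})^i=(\sigma_{\leq n-1}X)^i\oplus(\sigma_{\geq n}X)^i$ and $\cone(\tilde{y})^j=(\sigma_{\leq n-1}Y)^j\oplus(\sigma_{\geq n}Y)^j$ of the underlying graded category. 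Once more by one-sidedness the block from the $\sigma_{\geq n}$-summand of the source to the $\sigma_{\leq n-1}$-summand of the target vanishes, so this matrix is lower triangular, with diagonal blocks $f_{\leq n-1}$ and $f_{\geq n}$; the remaining block is a degree $-1$ morphism that, after the obvious reindexing absorbing the shift on the source, is the desired $h$, namely $h_i^j=f_{i-1}^j$. Finally, expanding $df=0$ blockwise --- equivalently, specialising the formula for $dw$ in the (unlabelled) Remark of \S\ref{subsec:pretr_dgcat} on induced morphisms between cones to the case where the relevant degree parameter is $0$ --- produces exactly $dh=f_{\geq n}\circ\tilde{x}-\tilde{y}\circ(f_{\leq n-1}[-1])$, i.e.\ the left-hand square commutes up to $dh$ (in particular in $H^0(\Tw(\varcat A))$).

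All of this is elementary, and I expect no conceptual obstacle; the only place requiring genuine care is keeping the Koszul-type signs from the shift functor \eqref{eq:twistedcomplex_shift} consistent with those in the differential of a morphism of twisted complexes, together with the matching of index conventions ($i$ versus $i\pm1$) between \eqref{eq:twistedcomplex_shift} and \eqref{eq:mappingcone_twistedcomplex}. The sign-chase identifying $c(\tilde{x})_i^j$ with $x_i^j$, and the analogous one fixing the precise form of $dh$, are the (routine) crux.
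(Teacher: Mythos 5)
Your proposal is correct and follows essentially the same route as the paper, which likewise deduces the pretriangle by unwinding Lemma \ref{lemma:mappingcone_twistedcomplexes} applied to $\tilde{x}$ and obtains $h$ from the off-diagonal components $f_i^j$ ($i\leq n-1\leq j$) of $f$, with the homotopy identity $dh=f_{\geq n}\circ\tilde{x}-\tilde{y}\circ(f_{\leq n-1}[-1])$ read off from $df=0$ blockwise. Your observations that the required biproducts exist trivially because one summand is a strict zero object, and that one-sidedness forces the matrix of $f$ to be lower triangular, are exactly the points the paper leaves implicit.
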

\begin{proof}
The first claim follows from the above Lemma \ref{lemma:mappingcone_twistedcomplexes} by unwinding everything.

The second claim is tedious but straightforward. The morphism $h$ is defined using suitable components of $f$, and the homotopy commutativity of the left square (up to $dh$) can be proved directly.
\end{proof}

It is sometimes useful to construct twisted complexes by taking iterated cones and (co)limits.
\begin{construction} \label{construction:twistedcompl_construction}
Let $X_0^\bullet$ be a twisted complex in $\Tw(\varcat A)$ which is concentrated in nonnegative degrees (namely, $X_0^i = 0$ for $i<0$). Moreover, let $(X^i : i < 0)$ be a sequence of objects of $\varcat A$, which we view as twisted complexes concentrated in degree $0$ abusing notation. We assume we have a degree $0$ morphism of twisted complexes
\[
X^{-1} \to X^\bullet_0.
\]
We can directly check that the twisted complex $X_{-1}^\bullet$ obtained from $X^\bullet_0$ by ``adjoining'' $X^{-1}$ in degree $-1$:
\[
X_{-1}^\bullet = \quad \cdots \to 0 \to X^{-1} \to X_0^0 \to X_0^1 \to X_0^2 \to \cdots
\]
fits in the following pretriangle:
\[
X^{-1} \to X^\bullet_0 \to X^\bullet_{-1} \to X^{-1}[1],
\]
with the obvious inclusion and projection morphisms.

We can iterate this, if we have another closed degree $0$ morphism
\[
X^{-2}[1] \to X^\bullet_{-1},
\]
where $X^{-2}$ is an object in $\varcat A$. We can adjoin $X^{-2}$ in degree $-2$ and define 
\[
X_{-2}^\bullet = \quad \cdots \to 0 \to X^{-2} \to X^{-1} \to X_0^0 \to X_0^1 \to X_0^2 \to \cdots
\]
In general, we will be able to construct twisted complexes $X^\bullet_{-n}$ concentrated in degrees $\geq -n$ and fitting in pretriangles:
\[
X^{-n}[n-1] \to X^\bullet_{-n} \to X^\bullet_{-n+1} \to X^{-n}[n].
\]
By construction, we have strict identities $\sigma_{\geq -k} X^\bullet_{-n} = X^\bullet_{-k}$ whenever $n\geq k$.

It is also possible to define a twisted complex $X^\bullet$ as ``union'' of the $X^\bullet_{-i}$. In degree $i$ we will have the object $X^i$ from the construction ($i<0$) or $X^i_0$ ($i\geq 0$). Abusing notation, we will denote it by $X^i$ in either case:
\begin{align*}
    X^\bullet = \quad \cdots X^i \to X^{i+1} \to X^{i+2} \to \cdots
\end{align*}
The twisted differentials $x_i^j \colon X^i \to X^j$ are directly induced from the ones of $X^\bullet_{-n}$ for $n$ suitably large. One can directly check that everything is well defined and the formula
\[
(-1)^j dx_i^j + \sum_k x^i_k x^k_j = 0
\]
holds, by using that the $X^\bullet_{-n}$ are all twisted complexes. In particular, we have by construction strict identities:
\[
\sigma_{\geq -n} X^\bullet = X^\bullet_{-n},
\]
for $n \geq 0$. Moreover, the twisted complex $X^\bullet$ is the colimit (and also homotopy colimit)  of the system
\[
X_0 \to X_{-1} \to \cdots
\]
This follows directly from Corollary \ref{corollary:brutaltrunc_limitcolimit}.

Clearly, a ``dual'' construction can be made involving truncations $\sigma_{\leq n}$, the suitable ``rotated'' pretriangles (i.e. pretriangles in the opposite category) and (homotopy) limits. We leave the details to the reader.
\end{construction}

\subsection{Isomorphisms of twisted complexes} \label{subsection:twcompl_iso}
In this part we give a characterization of isomorphisms in the homotopy category $H^0(\Tw(\varcat A))$. The dg-category $\varcat A$ will be strictly concentrated in nonpositive degrees and with strict zero objects.
\begin{proposition} \label{proposition:iso_twisted_components}
Let $f \colon X^\bullet \to Y^\bullet$ be a closed degree $0$ morphism in $\Tw(\varcat A)$. Then, $f$ is an isomorphism in $H^0(\Tw(\varcat A))$ (respectively in $Z^0(\Tw(\varcat A))$) if and only if the components $f_i^i \colon X^i \to Y^i$ are isomorphisms in $H^0(\varcat A)$ (respectively in $Z^0(\varcat A)$), for all $i \in \mathbb Z$.
\end{proposition}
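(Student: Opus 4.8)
The statement comprises a strict part (isomorphisms in $Z^0(\Tw(\varcat A))$) and a homotopy part (isomorphisms in $H^0(\Tw(\varcat A))$), and in each the ``only if'' is formal while the ``if'' carries the content. The plan is to dispose of the formal implications first. For the strict ``only if'': one-sidedness forces, by counting degrees, $(g\circ f)_i^i=g_i^i f_i^i$ for composable closed degree $0$ morphisms (in $(g\circ f)_i^i=\sum_k g_k^i f_i^k$ the only $k$ with both factors possibly nonzero is $k=i$), and likewise $f_i^i$ is closed whenever $f$ is; so a strict inverse of $f$ restricts on the diagonal to strict inverses of the $f_i^i$. For the strict ``if'': given the $f_i^i$ with strict inverses, one builds the inverse $g$ of $f$ by the evident triangular recursion — set $g_i^i:=(f_i^i)^{-1}$ and, for $j>i$, solve $(g\circ f)_i^j=0$ for $g_i^j$ (which appears there composed with the invertible $f_i^i$), recursing on $j-i$ — and then checks $dg=0$ and $f\circ g=1$ directly from the defining identities of twisted complexes and their morphisms.

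The substance is the ``if'' direction for $H^0$: if every $f_i^i$ is an isomorphism in $H^0(\varcat A)$, then $f$ is an isomorphism in $H^0(\Tw(\varcat A))$. The plan is to reduce to finite-length twisted complexes and induct on the width. For the reduction, Corollary~\ref{corollary:brutaltrunc_limitcolimit} and Corollary~\ref{corollary:truncations_morphisms_colimits} give $X^\bullet\cong\hocolim_k\holim_m X^\bullet_{[-k,m]}$ and, compatibly, $f\cong\hocolim_k\holim_m f_{[-k,m]}$, the homotopy limits being genuine limits by Proposition~\ref{prop:brutaltrunc_limitcolimit}; and a homotopy (co)limit over $\mathbb N$ of a compatible family of isomorphisms in the triangulated category $H^0(\Tw(\varcat A))$ is again an isomorphism, as one reads off the defining triangles $\holim A_m\to\prod A_m\to\prod A_m$ and $\bigoplus A_m\to\bigoplus A_m\to\hocolim A_m$, whose two product (resp.\ coproduct) terms carry the products (resp.\ coproducts) of the given isomorphisms. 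So it suffices to treat the finite-width $f_{[-k,m]}$, i.e.\ to assume $X^\bullet,Y^\bullet$ supported in an interval $[a,b]$, and to induct on $b-a$; the case $b=a$ is trivial ($f=f_a^a$). For the step, Proposition~\ref{prop:pretriangle_cotstruct} (with Lemma~\ref{lemma:mappingcone_twistedcomplexes}) exhibits $X^\bullet$ as $\cone\bigl(\tilde x\colon(\sigma_{\leq a}X^\bullet)[-1]\to\sigma_{\geq a+1}X^\bullet\bigr)$ and $f$ as the morphism of cones induced, in the format of Lemma~\ref{lemma:iso_cones}, by the outer verticals $f_a^a[-1]$ and $f_{\geq a+1}$ together with the prescribed degree $-1$ homotopy; since $f_a^a$ is an isomorphism by hypothesis and $f_{\geq a+1}$ is one by the inductive hypothesis (it has width $b-a-1$ and its diagonal components are among the $f_i^i$), Lemma~\ref{lemma:iso_cones} produces an $H^0$-inverse of $f$.

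The step I expect to be the main obstacle is the reverse implication for $H^0$ — that $f$ being an isomorphism in $H^0(\Tw(\varcat A))$ forces each $f_i^i$ to be an isomorphism in $H^0(\varcat A)$ — because the termwise argument does not go through directly. Indeed, if $g$ is an $H^0$-inverse with $g\circ f=1_{X^\bullet}+dh$, then although $(g\circ f)_i^i=g_i^i f_i^i$ on the nose, the diagonal of $dh$ is
\[
(dh)_i^i=(-1)^i\,d h_i^i+x_{i-1}^i\,h_i^{i-1}+h_{i+1}^i\,x_i^{i+1},
\]
and the last two cross-terms — produced by the degree $0$ ``backward'' component $h_i^{i-1}$ of the homotopy — need not be boundaries in $\varcat A(X^i,X^i)$; equivalently, $X^\bullet\mapsto X^i$ is functorial on $Z^0(\Tw(\varcat A))$ but not on $H^0(\Tw(\varcat A))$. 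I would therefore handle this direction not term by term but through the brutal-truncation pretriangles $X^i\to\sigma_{\leq i}X^\bullet\to\sigma_{\leq i-1}X^\bullet$ together with a careful bookkeeping of which truncated morphisms descend to $H^0$, and I expect this to be the technically most delicate point.
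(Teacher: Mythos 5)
Your treatment of the ``if'' direction in $H^0$ --- the part the paper actually proves in detail --- is correct, but it takes a genuinely different route. The paper (Steps 1--4 of its proof) builds, by induction over the finite truncations $f_{[-p,0]}$ and then over the $f_{\leq k}$, systems of left and right homotopy inverses \emph{and} homotopies that commute strictly with the inclusion and projection maps (this is what Remark \ref{remark:iso_cones_boundaries} is for), and then takes strict (co)limits of these systems to produce an explicit homotopy inverse of $f$. You instead only need each finite truncation $f_{[a,b]}$ to be an $H^0$-isomorphism (same induction on width, via Lemma \ref{lemma:iso_cones} and Proposition \ref{prop:pretriangle_cotstruct}) and then conclude by the five lemma applied to the exact sequences of Proposition \ref{prop:brutaltrunc_limitcolimit}; since the five lemma only requires the \emph{forward} maps to be compatible with the transition morphisms --- which they are, strictly, by Remark \ref{remark:brutaltruncations_Z0_functoriality} --- you avoid all the compatibility bookkeeping for the inverses and homotopies. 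This is a real simplification. One caveat: the triangles $\holim A_m\to\prod A_m\to\prod A_m$ and $\bigoplus A_m\to\bigoplus A_m\to\hocolim A_m$ you invoke do not exist at the level of objects, since $\Tw(\varcat A)$ is not assumed here to have countable (co)products; the argument must be run on the Hom complexes $\Tw(\varcat A)(Z^\bullet,-)$ and $\Tw(\varcat A)(-,Z^\bullet)$, where the relevant products of complexes always exist and Proposition \ref{prop:brutaltrunc_limitcolimit} supplies the needed short exact sequences. With that adjustment the reduction is sound.

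The genuine gap is the ``only if'' direction in $H^0$, which you flag but do not prove. Your worry about the cross terms $x_{i-1}^i h_i^{i-1}+h_{i+1}^i x_i^{i+1}$ is not merely a technical obstacle: that implication is \emph{false} as stated, so the ``careful bookkeeping through the brutal-truncation pretriangles'' you propose cannot succeed. Take $\varcat A$ an ordinary additive category placed in degree $0$ (with a strict zero object), $X^\bullet=0$, and $Y^\bullet=(\cdots\to 0\to A\xrightarrow{1_A}A\to 0\to\cdots)$ with $A\neq 0$ in degrees $0,1$. Then $1_{Y^\bullet}=dh$ for the degree $-1$ endomorphism $h$ whose only nonzero component is $h_1^0=1_A$ (one checks $(dh)_0^0=h_1^0y_0^1=1_A$, $(dh)_1^1=y_0^1h_1^0=1_A$, all other components vanish), so $Y^\bullet\cong 0$ in $H^0(\Tw(\varcat A))$ and the zero morphism $f\colon X^\bullet\to Y^\bullet$ is an isomorphism there, while its components $0\to A$ are not isomorphisms in $H^0(\varcat A)$. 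The $Z^0$ version of ``only if'', which you do prove via $(g\circ f)_i^i=g_i^i f_i^i$, is fine. Note that the paper itself dismisses this direction as ``straightforward'' and gives no argument, so you are not missing an idea the paper has; rather, the statement needs to be weakened (to the ``if'' direction plus the $Z^0$ biconditional), and its downstream uses in the $H^0$ ``only if'' form --- in Step 1 of Proposition \ref{proposition:twistedcomplexes_cotstruct} and in the proof of Proposition \ref{prop:Tw_preserve_quasiequivalences} --- have to be repaired accordingly.
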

\begin{proof}
Proving that if $f$ is an isomorphism in $H^0(\Tw(\varcat A))$ or $Z^0(\Tw(\varcat A))$ then its components $f_i^i$ are isomorphisms in $H^0(\varcat A)$ or $Z^0(\varcat A)$ is straightforward, and we shall concentrate on the other implication. We shall deal with the case of inverses in $H^0(\Tw(\varcat A))$; the case of strict inverses in $Z^0(\Tw(\varcat A))$ is proved along the same lines but with easier arguments, and is left to the reader.

\emph{Step 1.} We first construct, inductively, a system of both left and right homotopy inverses to the truncated morphisms $f_{[-p,0]}$, for all $p \geq 0$, keeping track of the homotopies. If $p=0$, we have by definition that $f_{[0,0]} = f_0^0 \colon X_{[0,0]} \to Y_{[0,0]}$, and $X^\bullet_{[0,0]}$ and $Y^\bullet_{[0,0]}$ are twisted complexes concentrated in degree $0$. Since $f_0^0$ is an isomorphism in $H^0(\varcat A)$, we can find closed degree $0$ morphisms of twisted complexes
\[
g^l_{[0,0]}, g^r_{[0,0]} \colon Y^\bullet_{[0,0]} \to X^\bullet_{[0,0]}
\]
such that
\begin{align*}
    g^l_{[0,0]} \circ f_{[0,0]}  &= 1_{X^0} + d h^l_{[0,0]}, \\
    f_{[0,0]} \circ g^r_{[0,0]} &= 1_{Y^0}+ d h^r_{[0,0]},
\end{align*}
for suitably chosen degree $-1$ morphisms $h^l_{[0,0]}$ and $h^r_{[0,0]}$.
Inductively, we assume that we have defined closed degree $0$ morphisms
\[
g^l_{[-k,0]}, g^r_{[-k,0]} \colon Y^\bullet_{[-k,0]} \to X^\bullet_{[-k,0]}
\]
and degree $-1$ morphisms
\begin{align*}
    h^l_{[-k,0]} \colon X^\bullet_{[-k,0]} & \to X^\bullet_{[-k,0]}, \\
    h^r_{[-k,0]} \colon Y^\bullet_{[-k,0]} & \to Y^\bullet_{[-k,0]}
\end{align*}
for $k=0,\ldots,p-1$, with the following properties:
\begin{itemize}
    \item The following diagrams are strictly commutative:
    \[\begin{tikzcd}[ampersand replacement=\&]
	{Y^\bullet_{[-k,0]}} \& {Y^\bullet_{[-k-1,0]}} \\
	{X^\bullet_{[-k,0]}} \& {X^\bullet_{[-k-1,0]},}
	\arrow[from=1-1, to=1-2]
	\arrow["{g^l_{[-k,0]}}"', shift right=1, from=1-1, to=2-1]
	\arrow["{g^r_{[-k,0]}}", shift left=1, from=1-1, to=2-1]
	\arrow[from=2-1, to=2-2]
	\arrow["{g^l_{[-k-1,0]}}"', shift right=1, from=1-2, to=2-2]
	\arrow["{g^r_{[-k-1,0]}}", shift left=1, from=1-2, to=2-2]
\end{tikzcd}\]
\[
\begin{tikzcd}[ampersand replacement=\&]
	{X^\bullet_{[-k,0]}} \& {X^\bullet_{[-k-1,0]}} \\
	{X^\bullet_{[-k,0]}} \& {X^\bullet_{[-k-1,0]},}
	\arrow[from=1-1, to=1-2]
	\arrow["{h^l_{[-k,0]}}"', from=1-1, to=2-1]
	\arrow[from=2-1, to=2-2]
	\arrow["{h^l_{[-k-1,0]}}", from=1-2, to=2-2]
\end{tikzcd} \qquad
\begin{tikzcd}[ampersand replacement=\&]
	{Y^\bullet_{[-k,0]}} \& {Y^\bullet_{[-k-1,0]}} \\
	{Y^\bullet_{[-k,0]}} \& {Y^\bullet_{[-k-1,0]}.}
	\arrow[from=1-1, to=1-2]
	\arrow["{h^r_{[-k,0]}}"', from=1-1, to=2-1]
	\arrow[from=2-1, to=2-2]
	\arrow["{h^r_{[-k-1,0]}}", from=1-2, to=2-2]
\end{tikzcd}
\]
for $k=0,\ldots,p-1$, where $Y^\bullet_{[-k,0]} \to Y^\bullet_{[-k-1,0]}$ and $X^\bullet_{[-k,0]} \to X^\bullet_{[-k-1,0]}$ are the natural inclusions, cf. \eqref{eq:inclusions_righttruncations}.
\item For $k=0,\ldots,p-1$, we have:
\begin{align*}
    g^l_{[-k,0]} \circ f_{[-k,0]} &= 1 + dh^l_{[-k,0]}, \\
    f_{[-k,0]} \circ g^r_{[-k,0]} &= 1 + dh^r_{[-k,0]}.
\end{align*}
\end{itemize}
We consider the following diagram, where the rows are pretriangles (cf. Proposition \ref{prop:pretriangle_cotstruct}):
\begin{equation} \label{eq:tower_truncations_first}
\begin{tikzcd}[ampersand replacement=\&]
	{X^\bullet_{[-p,-p]}[-1]} \& {X^\bullet_{[-p+1,0]}} \& {X^\bullet_{[-p,0]}} \& {X^\bullet_{[-p,-p]}} \\
	{Y^\bullet_{[-p,-p]}[-1]} \& {Y^\bullet_{[-p+1,0]}} \& {Y^\bullet_{[-p,0]}} \& {Y^\bullet_{[-p,-p]}.}
	\arrow[from=1-1, to=1-2]
	\arrow[from=1-2, to=1-3]
	\arrow[from=1-3, to=1-4]
	\arrow["{f_{[-p,-p]}[-1]}"', from=1-1, to=2-1]
	\arrow["{f_{[-p+1,0]}}"', from=1-2, to=2-2]
	\arrow["{f_{[-p,-p]}}"', from=1-4, to=2-4]
	\arrow["{f_{[-p,0]}}"', from=1-3, to=2-3]
	\arrow[from=2-1, to=2-2]
	\arrow[from=2-2, to=2-3]
	\arrow[from=2-3, to=2-4]
\end{tikzcd}
\end{equation}
We observe that the middle and the right squares of the above diagram are strictly commutative. $f_{[p,p]}$ is invertible in $H^0(\Tw(\varcat A))$ and we may also apply the inductive hypothesis, so we may find closed degree $0$ morphisms
\begin{align*}
g^l_{[p,p]}, g^r_{[p,p]} \colon Y^\bullet_{[-p,-p]} & \to X^\bullet_{[-p,-p]}, \\
g^l_{[-p+1,0]}, g^r_{[-p+1,0]} \colon Y^\bullet_{[-p+1,0]} & \to X^\bullet_{[-p+1,0]}
\end{align*}
and degree $-1$ morphisms
\begin{align*}
    h^l_{[-p,-p]} \colon X^\bullet_{[-p,-p]} & \to X^\bullet_{[-p,-p]}, \\
    h^r_{[-p,-p]} \colon Y^\bullet_{[-p,-p]} & \to Y^\bullet_{[-p,-p]}, \\
    h^l_{[-p+1,0]} \colon X^\bullet_{[-p+1,0]} & \to X^\bullet_{[-p+1,0]}, \\
    h^r_{[-p+1,0]} \colon Y^\bullet_{[-p+1,0]} & \to Y^\bullet_{[-p+1,0]}
\end{align*}
such that
\begin{align*}
    g^l_{[p,p]} \circ f_{[p,p]} & = 1 + dh^l_{[p,p]}, \\
    f_{[p,p]} \circ g^r_{[p,p]} & = 1 + dh^r_{[p,p]}, \\
    g^l_{[-p+1,0]} \circ f_{[-p+1,0]} & = 1 + dh^l_{[-p+1,0]}, \\
     f_{[-p+1,0]} \circ g^r_{[-p+1,0]} & = 1 + dh^r_{[-p+1,0]}.
\end{align*}
Recalling Lemma \ref{lemma:iso_cones} and Remark \ref{remark:iso_cones_boundaries}, we find closed degree $0$ morphisms 
\[
g^l_{[-p,0]}, g^r_{[-p,0]} \colon Y^\bullet_{[-p,0]} \to X^\bullet_{[-p,0]}
\]
and degree $-1$ morphisms
\begin{align*}
    h^l_{[-p,0]} \colon X^\bullet_{[-p,0]} & \to X^\bullet_{[-p,0]}, \\
    h^r_{[-p,0]} \colon Y^\bullet_{[-p,0]} & \to Y^\bullet_{[-p,0]}.
\end{align*}
The morphisms $g^l_{[-p,0]}$ and $g^r_{[-p,0]}$ fit in the following diagram:
\[\begin{tikzcd}[ampersand replacement=\&]
	{Y^\bullet_{[-p,-p]}[-1]} \& {Y^\bullet_{[-p+1,0]}} \& {Y^\bullet_{[-p,0]}} \& {Y^\bullet_{[-p,-p]}} \\
	{X^\bullet_{[-p,-p]}[-1]} \& {X^\bullet_{[-p+1,0]}} \& {X^\bullet_{[-p,0]}} \& {X^\bullet_{[-p,-p]}.}
	\arrow[from=1-1, to=1-2]
	\arrow[from=1-2, to=1-3]
	\arrow[from=1-3, to=1-4]
	\arrow["{g^l_{[-p,-p]}[-1]}"', shift right=1, from=1-1, to=2-1]
	\arrow["{g^r_{[-p,-p]}[-1]}", shift left=1, from=1-1, to=2-1]
	\arrow["{g^l_{[-p+1,0]}}"', shift right=1, from=1-2, to=2-2]
	\arrow["{g^r_{[-p+1,0]}}", shift left=1, from=1-2, to=2-2]
	\arrow["{g^l_{[-p,-p]}}"', shift right=1, from=1-4, to=2-4]
	\arrow["{g^r_{[-p,-p]}}", shift left=1, from=1-4, to=2-4]
	\arrow["{g^l_{[-p,0]}}"', shift right=1, from=1-3, to=2-3]
	\arrow["{g^r_{[-p,0]}}", shift left=1, from=1-3, to=2-3]
	\arrow[from=2-1, to=2-2]
	\arrow[from=2-2, to=2-3]
	\arrow[from=2-3, to=2-4]
\end{tikzcd}\]
The middle and right squares of the above diagram are strictly commutative; we can draw similar diagrams and conclusions for the degree $-1$ morphisms $h^l_{[-p,0]}$ and $h^r_{[-p,0]}$. Moreover, we have 
\begin{align*}
     g^l_{[-p,0]} \circ f_{[-p,0]} & = 1 + dh^l_{[-p,0]}, \\
     f_{[-p,0]} \circ g^r_{[-p,0]} & = 1 + dh^r_{[-p,0]}.
\end{align*}
The induction is complete.

\emph{Step 2.} We know from Corollary \ref{corollary:truncations_morphisms_colimits} that
\[
f_{\leq 0} = \varinjlim_p f_{[-p,0]} \colon X^\bullet_{\leq 0} \to Y^\bullet_{\leq 0}.
\]
We may take the colimit of the systems of morphisms $g^l_{[-p,0]}$ and $g^r_{[-p,0]}$ defined in Step 1, defining:
\begin{align*}
g^l_{\leq 0} & = \varinjlim_p g^l_{[-p,0]} \colon Y^\bullet_{\leq 0} \to X^\bullet_{\leq 0}, \\
g^r_{\leq 0} & = \varinjlim_p g^r_{[-p,0]} \colon Y^\bullet_{\leq 0} \to X^\bullet_{\leq 0}.
\end{align*}
The homotopies $h^l_{[-p,0]}$ and $h^r_{[-p,0]}$ are themselves a direct system, by construction. We may set:
\begin{align*}
    h^l_{\leq 0} &= \varinjlim_p h^l_{[-p,0]} \colon X^\bullet_{\leq 0} \to X^\bullet_{\leq 0}, \\
    h^r_{\leq 0} &= \varinjlim_p h^r_{[-p,0]} \colon Y^\bullet_{\leq 0} \to Y^\bullet_{\leq 0},
\end{align*}
By dg-functoriality of the direct limit, we have:
\begin{align*}
    g^l_{\leq 0} \circ f_{\leq 0} &= \varinjlim_p (g^l_{[-p,0]} \circ f_{[-p,0]}) = 1 + d \varinjlim_p h^l_{[-p,0]} = 1+d h^l_{\leq 0}, \\
     f_{\leq 0} \circ g^r_{\leq 0} &= \varinjlim_p (f_{[-p,0]}\circ  g^r_{[-p,0]}) = 1 + d\varinjlim_p h^r_{[-p,0]} = 1+d h^r_{\leq 0}.
\end{align*}

\emph{Step 3.} Next, we construct inductively left and right homotopy inverses of $f_{\leq k}$ for $k\geq 0$, again keeping track of the homotopies. Recalling Proposition \ref{prop:pretriangle_cotstruct}, we have the following diagram (here $k\geq 1$):
\begin{equation} \label{eq:tower_truncations_second}
\begin{tikzcd}[ampersand replacement=\&]
	{X^\bullet_{[k,k]}} \& {X^\bullet_{\leq k}} \& {X^\bullet_{\leq k-1}} \& {X^\bullet_{[k,k]}[1]} \\
	{Y^\bullet_{[k,k]}} \& {Y^\bullet_{\leq k}} \& {Y^\bullet_{\leq k-1}} \& {Y^\bullet_{[k,k]}[1].}
	\arrow[from=1-1, to=1-2]
	\arrow[from=1-2, to=1-3]
	\arrow[from=1-3, to=1-4]
	\arrow["{f_{[k,k]}}"', from=1-1, to=2-1]
	\arrow["{f_{\leq k}}"', from=1-2, to=2-2]
	\arrow["{f_{[k,k][1]}}"', from=1-4, to=2-4]
	\arrow["{f_{\leq k-1}}"', from=1-3, to=2-3]
	\arrow[from=2-1, to=2-2]
	\arrow[from=2-2, to=2-3]
	\arrow[from=2-3, to=2-4]
\end{tikzcd}
\end{equation}
The rows are (``rotated'') pretriangles. The left and middle squares are strictly commutative -- compare with \eqref{eq:tower_truncations_first}, where the middle and the right squares were strictly commutative. The morphisms $X^\bullet_{\leq k} \to X^\bullet_{\leq k-1}$ and $Y^\bullet_{\leq k} \to Y^\bullet_{\leq k-1}$ are the projections described in \eqref{eq:projections_lefttruncations}. Our precise goal is to define, inductively for all $k \geq 0$, closed degree $0$ morphisms:
    \begin{equation*}
        g^l_{\leq k}, g^r_{\leq k} \colon Y^\bullet_{\leq k} \to X^\bullet_{\leq k},
    \end{equation*}
    and degree $-1$ morphisms
    \begin{align*}
        h^l_{\leq k} \colon X^\bullet_{\leq k} & \to X^\bullet_{\leq k}, \\
        h^r_{\leq k} \colon Y^\bullet_{\leq k} & \to Y^\bullet_{\leq k}
    \end{align*}
    such that:
\begin{itemize}
    \item  The following diagrams are strictly commutative ($k \geq 1$):
    \[\begin{tikzcd}[ampersand replacement=\&]
	{Y^\bullet_{\leq k}} \& {Y^\bullet_{\leq k-1}} \\
	{X^\bullet_{\leq k}} \& {X^\bullet_{\leq k-1},}
	\arrow[from=1-1, to=1-2]
	\arrow["{g^l_{\leq k}}"', shift right=1, from=1-1, to=2-1]
	\arrow["{g^r_{\leq k}}", shift left=1, from=1-1, to=2-1]
	\arrow[from=2-1, to=2-2]
	\arrow["{g^l_{\leq k-1}}"', shift right=1, from=1-2, to=2-2]
	\arrow["{g^r_{\leq k-1}}", shift left=1, from=1-2, to=2-2]
\end{tikzcd}\]
\[
\begin{tikzcd}[ampersand replacement=\&]
	{X^\bullet_{\leq k}} \& {X^\bullet_{\leq k-1}} \\
	{X^\bullet_{\leq k}} \& {X^\bullet_{\leq k-1},}
	\arrow[from=1-1, to=1-2]
	\arrow["{h^l_{\leq k}}"', from=1-1, to=2-1]
	\arrow[from=2-1, to=2-2]
	\arrow["{h^l_{\leq k-1}}", from=1-2, to=2-2]
\end{tikzcd} \qquad
\begin{tikzcd}[ampersand replacement=\&]
	{Y^\bullet_{\leq k}} \& {Y^\bullet_{\leq k-1}} \\
	{Y^\bullet_{\leq k}} \& {Y^\bullet_{\leq k-1}.}
	\arrow[from=1-1, to=1-2]
	\arrow["{h^r_{\leq k}}"', from=1-1, to=2-1]
	\arrow[from=2-1, to=2-2]
	\arrow["{h^r_{\leq k-1}}", from=1-2, to=2-2]
\end{tikzcd}
\]
\item We have equalities, for $k \geq 0$:
\begin{align*}
    g^l_{\leq k} \circ f_{\leq k} &= 1 + dh^l_{\leq k}, \\
    f_{\leq k} \circ g^r_{\leq k} &= 1 + dh^r_{\leq k}.
\end{align*}
\end{itemize}
The base step of the induction is precisely the above Step 1. The inductive step is proved with essentially the same argument as in Step 1, using straightforward variants of Lemma \ref{lemma:iso_cones} and Remark \ref{remark:iso_cones_boundaries} (with ``rotated'' pretriangles). The details are left to the reader.

\emph{Step 4.} We argue essentially as in Step 2. We now have inverse systems of left and right homotopy inverses $g^l_{\leq k}, g^r_{\leq k}$ of $f_{\leq k}$, together with systems of homotopies $h^l_{\leq k}$ and $h^r_{\leq k}$. For all $k \geq 0$ we may define closed degree $0$ morphisms:
\begin{align*}
g^l & = \varprojlim_k g^l_{\leq k} \colon Y^\bullet \to X^\bullet, \\
g^r & = \varprojlim_k g^r_{\leq k} \colon Y^\bullet \to X^\bullet,
\end{align*}
and degree $-1$ morphisms:
\begin{align*}
    h^l &= \varprojlim_k h^l_{\leq k} \colon X^\bullet \to X^\bullet, \\
    h^r &= \varprojlim_p h^r_{\leq k} \colon Y^\bullet \to Y^\bullet.
\end{align*}
We recall from Corollary \ref{corollary:truncations_morphisms_colimits} that $f$ is the inverse limit of its truncations $f_{\leq k}$:
\[
f = \varprojlim_k f_{\leq k}.
\]

Using dg-functoriality of inverse limits, we find:
\begin{align*}
    g^l \circ f &= \varprojlim_k (g^l_{\leq k} \circ f_{\leq k}) = 1 + d \varprojlim_k h^l_{\leq k} = 1+d h^l, \\
     f \circ g^r &= \varprojlim_k (f_{\leq k}\circ  g^r_{\leq k}) = 1 + d\varprojlim_k h^r_{\leq k} = 1+d h^r.
\end{align*}
We conclude that $f$ has both a left and right homotopy inverse. Hence, it yields an isomorphism in $H^0(\Tw(\varcat A))$, as claimed.
\end{proof}

\subsection{Twisted complexes and quasi-equivalences} \label{subsection:twcompl_qeq}
We want to prove that $\Tw(\varcat A)$ depends only on the quasi-equivalence class of $\varcat A$. We start with an auxiliary technical lemma and then the preservation of quasi-fully faithful dg-functors.

\begin{lemma} \label{lemma:quasiiso_characterization}
Let $f \colon V^\bullet \to W^\bullet$ be a chain map of complexes. Then, $f$ is a quasi-isomorphism if and only if the following condition holds:
\begin{itemize}
    \item Let $p \in \mathbb Z$ and let $y \in W^p$ and $x' \in V^{p+1}$ such that $dy=f(x')$. Then, there is $z \in W^{p-1}$ and $x \in V^p$ such that:
    \begin{align*}
        dx &= x', \\
        y - dz &= f(x).
    \end{align*}
\end{itemize}
\end{lemma}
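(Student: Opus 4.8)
The plan is to recognize the displayed condition as the element-level description of acyclicity of the mapping cone $\cone(f)$, and then to invoke the standard fact that a chain map is a quasi-isomorphism exactly when its mapping cone is acyclic.

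Concretely, I would write $\cone(f)$ as the complex of $\basering k$-modules with $\cone(f)^p=V^{p+1}\oplus W^p$ and differential $(x',y)\mapsto(-dx',\,dy-f(x'))$ (this sign convention is chosen to match the formulas of \S\ref{subsec:pretr_dgcat}; any other is harmless). Unwinding the definitions, a $p$-cocycle of $\cone(f)$ is a pair $(x',y)$ with $dx'=0$ and $dy=f(x')$, and such a cocycle is a coboundary --- the differential of some $(x,z)\in V^p\oplus W^{p-1}$ --- precisely when $dx=x'$ and $y-dz=f(x)$. Thus the condition in the statement is, verbatim, the assertion that every cocycle of $\cone(f)$ is a coboundary, i.e.\ that $\cone(f)$ is acyclic. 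I would note in passing that, read this way, the quantifier should be restricted to $x'$ with $dx'=0$, which is precisely what makes $(x',y)$ a cocycle of $\cone(f)$; without this caveat the ``only if'' implication is not literally true, as one already sees for the zero map from a contractible $V^\bullet$ to $W^\bullet=0$.

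It then remains to recall that $f$ is a quasi-isomorphism if and only if $\cone(f)$ is acyclic. The most elementary route is the long exact cohomology sequence of the short exact sequence of complexes $0\to W^\bullet\to\cone(f)\to V^\bullet[1]\to 0$, whose connecting homomorphism $H^p(V^\bullet[1])=H^{p+1}(V^\bullet)\to H^{p+1}(W^\bullet)$ equals, up to sign, $H^{p+1}(f)$; hence $\cone(f)$ is acyclic iff $H^n(f)$ is an isomorphism for every $n$. Equivalently, one may invoke the cohomological long exact sequence of the distinguished triangle $V^\bullet\xrightarrow{f}W^\bullet\to\cone(f)\to V^\bullet[1]$ in $H^0(\compdg(\basering k))$, using the pretriangulated formalism of \S\ref{subsec:pretr_dgcat}.

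If a cone-free argument is preferred, I would prove the two implications directly. For ``condition $\Rightarrow$ quasi-isomorphism'': applying the condition with $x'=0$ shows that every cocycle $y\in W^p$ equals $f(x)+dz$ for some cocycle $x\in V^p$ and some $z$, so $H^*(f)$ is surjective; applying it to a cocycle $x'$ with $f(x')$ a coboundary (say $f(x')=dy$) produces $x$ with $dx=x'$, whence $[x']=0$ and $H^*(f)$ is injective. For ``quasi-isomorphism $\Rightarrow$ condition'': given a cocycle $x'\in V^{p+1}$ and $y\in W^p$ with $dy=f(x')$, injectivity of $H^*(f)$ gives $x_0\in V^p$ with $dx_0=x'$; then $y-f(x_0)$ is a cocycle, and surjectivity of $H^*(f)$ provides a cocycle $x_1\in V^p$ and $z\in W^{p-1}$ with $f(x_1)=(y-f(x_0))-dz$; setting $x:=x_0+x_1$ then gives $dx=x'$ and $y-dz=f(x)$. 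The only real obstacle is this bookkeeping --- tracking which elements must be cocycles and pinning down the signs --- since everything else is routine.
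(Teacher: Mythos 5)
Your ``cone-free'' fallback is, step for step, the paper's own proof: injectivity of $H^*(f)$ produces $x_0$ with $dx_0=x'$, surjectivity is then applied to the cocycle $y-f(x_0)$, and the converse direction is obtained from the two specializations $x'=0$ and $f(x')=dy$ exactly as in the paper. The packaging via acyclicity of $\cone(f)$ is a genuinely different and arguably cleaner route --- it explains why the condition has the shape it does and reduces the lemma to the long exact sequence of the cone --- at the mild cost of fixing sign conventions; either argument is acceptable. Your parenthetical caveat is also correct and worth recording: as literally stated, the ``only if'' direction fails unless $x'$ is required to be a cocycle (your zero map out of a contractible complex does break it, provided the complex is placed in degrees where $d\colon V^p\to V^{p+1}$ is not surjective), and the paper's proof silently assumes $dx'=0$ at the point where it treats $x'$ as having a cohomology class killed by $H^*(f)$. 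Since $dx'=0$ is in any case forced by the desired conclusion $dx=x'$, and since only the ``if'' direction --- for which the unrestricted condition is the \emph{stronger} hypothesis --- is invoked later in the proof of Lemma \ref{lemma:tw_quasifullyfaithful}, nothing downstream is affected; but your restricted formulation is the one that is actually true as an equivalence.
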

\begin{proof}
Let us assume that $f$ is a quasi-isomorphism, and let $y \in W^p$ and $x' \in V^{p+1}$ such that $dy=f(x')$. Since $f(x')$ is a coboundary and $f$ is injective in cohomology, we find $x_0 \in V^p$ such that $dx_0 = x'$. Now, we see that
\[
d(y-f(x_0)) = dy - f(dx_0) = f(x')-f(x')=0,
\]
so $y-f(x_0)$ is a $p$-cocycle in $W^\bullet$. Since $f$ is surjective in cohomology, we find $x'_0 \in V^p$ with $dx'_0 = 0$ and $z \in W^{p-1}$ such that
\[
y - f(x_0) = f(x'_0) + dz.
\]
Taking $x=x_0 +x'_0$ we conclude.

Conversely, assume that the above condition holds. We want to prove that, for $p \in \mathbb Z$, $H^p(f)$ is an isomorphism. First, let $x' \in V^p$ such that $[f(x')] = [0]$, namely
\[
dy = f(x')
\]
for some $y \in W^{p-1}$. Then, from the hypothesis we find $x \in V^{p-1}$ such that $x'=dx$, which means that $[x']=[0]$ and $H^p(f)$ is injective. To prove surjectivity, let $[y] \in H^p(W^\bullet)$. $y$ is a cocycle, so we have
\[
dy=0=f(0).
\]
Applying the hypothesis with $x'=0$, we find $x \in V^p$ such that $dx=0$ and $z \in W^{p-1}$ such that
\[
y-f(x) = dz,
\]
namely $[y]=[f(x)]$.
\end{proof}

\begin{lemma} \label{lemma:tw_quasifullyfaithful}
Let $u \colon \varcat A \to \varcat B$ be a quasi-fully faithful dg-functor. Then, $\Tw(u) \colon \Tw(\varcat A) \to \Tw(\varcat B)$ is quasi-fully faithful.
\end{lemma}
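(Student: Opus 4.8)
The plan is first to reduce $\varcat A$ and $\varcat B$ to a convenient shape, and then to verify the quasi-isomorphism criterion of Lemma~\ref{lemma:quasiiso_characterization} diagonal by diagonal. For the reduction: by definition $\Tw(u) = \Tw(\tau_{\leq 0}u_{\{0\}})$, and $\tau_{\leq 0}u_{\{0\}}$ is again quasi-fully faithful — $u_{\{0\}}$ is so by the Proposition of \S\ref{subsec:addzeroobj}, and $\tau_{\leq 0}$ preserves quasi-full-faithfulness because for a chain map $V\to W$ the induced $\tau_{\leq 0}V\to\tau_{\leq 0}W$ is an isomorphism on $H^i$ for $i\leq 0$ (there it coincides with $H^i(V)\to H^i(W)$) and both sides vanish for $i>0$. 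So I may assume $\varcat A,\varcat B$ are strictly concentrated in nonpositive degrees with strict zero objects. Fixing $X^\bullet,Y^\bullet\in\Tw(\varcat A)$, I must show that $\Tw(u)\colon\Tw(\varcat A)(X^\bullet,Y^\bullet)\to\Tw(\varcat B)(\Tw(u)X^\bullet,\Tw(u)Y^\bullet)$ is a quasi-isomorphism; I verify the condition of Lemma~\ref{lemma:quasiiso_characterization}. So let $p\in\mathbb Z$, let $\psi$ be a degree $p$ morphism $\Tw(u)X^\bullet\to\Tw(u)Y^\bullet$ and $\varphi'$ a degree $p+1$ morphism $X^\bullet\to Y^\bullet$ with $d\psi=\Tw(u)(\varphi')$; I need to produce a degree $p-1$ morphism $\chi$ and a degree $p$ morphism $\varphi\colon X^\bullet\to Y^\bullet$ with $d\varphi=\varphi'$ and $\psi-d\chi=\Tw(u)(\varphi)$.

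I would construct the components $\varphi_i^j$ and $\chi_i^j$ by induction on the diagonal $g=j-i$, which is bounded below (a nonzero $\varphi_i^j$ forces $g\geq p$ and a nonzero $\chi_i^j$ forces $g\geq p-1$, by one-sidedness together with the nonpositivity of $\varcat A,\varcat B$). The structural point that makes this work is that, in the formulas for $(d\varphi)_i^j$ and $(d\chi)_i^j$, every summand involving the twisted differentials of $X^\bullet$ or $Y^\bullet$ only sees components of $\varphi$, resp.\ $\chi$, of strictly smaller diagonal. Hence, with $g$ fixed, $(i,j)$ fixed with $j-i=g$, and all components of diagonal $<g$ already chosen, the $(i,j)$-parts of the two equations $d\varphi=\varphi'$ and $\psi-d\chi=\Tw(u)(\varphi)$ take the shape
\[
\pm\,d\varphi_i^j = x', \qquad \psi_i^j - (\text{known}) = u(\varphi_i^j) \pm d\chi_i^j,
\]
where $x'\in\varcat A(X^i,Y^j)$ and the ``known'' term depends only on the lower-diagonal data already built. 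This is exactly an instance of the \emph{conclusion} of Lemma~\ref{lemma:quasiiso_characterization} applied to the map $u\colon\varcat A(X^i,Y^j)\to\varcat B(u(X^i),u(Y^j))$, which is a quasi-isomorphism since $u$ is quasi-fully faithful; applying it produces $\varphi_i^j$ and $\chi_i^j$, and collecting these over all $(i,j)$ assembles the desired $\varphi$ and $\chi$.

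The step that requires genuine care — the main obstacle — is checking, at each stage of the induction, the \emph{hypothesis} of Lemma~\ref{lemma:quasiiso_characterization}, namely that $d(\text{lemma-}y)=u(\text{lemma-}x')$. This is where the global identity $d\psi=\Tw(u)(\varphi')$ and the lower-diagonal instances of the two equations (valid by the inductive hypothesis) must be combined: expanding everything with the signed Leibniz rule of $\Tw$ and the defining equations of twisted complexes, one verifies that the lower-diagonal ``known'' terms are arranged precisely so that this compatibility holds. This is a long but purely mechanical sign computation, which I would relegate to the reader.

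An alternative route, avoiding Lemma~\ref{lemma:quasiiso_characterization}, is to reduce to bounded twisted complexes: by Corollary~\ref{corollary:brutaltrunc_limitcolimit} the hom complex $\Tw(\varcat A)(X^\bullet,Y^\bullet)$ is an iterated homotopy limit and sequential filtered colimit (the latter along the split monomorphisms provided by the one-sided inverses appearing in the proof of Proposition~\ref{prop:brutaltrunc_limitcolimit}) of the hom complexes between the finite-support truncations $X^\bullet_{[a,b]}$ and $Y^\bullet_{[c,d]}$, all compatibly with $\Tw(u)$; since $\holim$ and sequential filtered $\varinjlim$ of chain complexes preserve quasi-isomorphisms, this reduces the claim to the finite-support case, which is then handled by peeling $X^\bullet$ and $Y^\bullet$ apart into their finitely many objects using the pretriangles of Proposition~\ref{prop:pretriangle_cotstruct}, the stability of quasi-isomorphisms of complexes under mapping cones, and the quasi-full-faithfulness of $u$ as the base case.
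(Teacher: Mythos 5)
Your main argument is essentially the paper's own proof: reduce to $\varcat A,\varcat B$ strictly concentrated in nonpositive degrees, apply Lemma \ref{lemma:quasiiso_characterization} to the map of hom complexes, and construct the preimage and the homotopy componentwise by induction on the diagonal $j-i$ (bounded below by one-sidedness), invoking quasi-full-faithfulness of $u$ on each individual complex $\varcat A(X^i,Y^j)$. The ``mechanical sign computation'' you defer --- verifying at each inductive step that the hypothesis of Lemma \ref{lemma:quasiiso_characterization} holds, by combining $d\psi=\Tw(u)(\varphi')$ with the lower-diagonal identities, the Leibniz rule and the Maurer--Cartan equations of the twisted differentials --- is exactly the content of the paper's displayed computations ($\ast3$)--($\ast5$), so your proposal matches the paper's route in substance.
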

\begin{proof}
Without loss of generality, we may assume that both $\varcat A$ and $\varcat B$ are strictly concentrated in nonpositive degrees. Applying Lemma \ref{lemma:quasiiso_characterization} and using shifts suitably, we reduce to prove the following claim:
\begin{itemize}
    \item Let $A^\bullet = (A^i, a_i^j)$ and $B^\bullet = (B^i, b_i^j)$ be objects in $\Tw(\varcat A)$. Let $g \colon u(A^\bullet) \to u(B^\bullet)$ be a degree $0$ morphism, and let $h \colon A^\bullet \to B^\bullet$ be a degree $1$ morphism such that $dg=u(h)$. Then, there exist a degree $0$ morphism $f \colon A^\bullet \to B^\bullet$ and a degree $-1$ morphism $\alpha \colon u(A^\bullet) \to u(B^\bullet)$ such that:
    \begin{align*}
        df &= h, \\
        g - d\alpha &=u(f).
    \end{align*}
\end{itemize}

We shall define $f$ and $\alpha$ inductively, as follows. First, we set:
\begin{align*}
    f_i^k &=0, \\
    \alpha_i^k &=0,
\end{align*}
for all $i \in \mathbb Z$, for $k < i$. Next, we suppose we have $f_i^{i+p}$ and $\alpha_i^{i+p}$ for all $p < n$, for $n=0,1,\ldots$, for all $i \in \mathbb Z$, such that:
\begin{align}
    (-1)^{i+p} df_i^{i+p} + b_k^{i+p} f_i^k - f_k^{i+p} a_i^k &= h_i^{i+p}, \tag{$\ast1$} \\
    g_i^{i+p} - ((-1)^{i+p} d \alpha_i^{i+p} + u(b_k^{i+p}) \alpha_i^k + \alpha_k^{i+p} u(a_i^k)) &= u(f_i^{i+p}), \tag{$\ast2$} \label{eq:gij_inductivehypothesis}
\end{align}
where we suppressed the summation symbols, adopting Einstein summation convention. Notice that the components $f_i^j$ and $\alpha_i^j$ appearing in the above expressions are the ones already known by inductive hypothesis.

Next, we try to define $f_i^{i+n}$ and $\alpha_i^{i+n}$ satisfying the suitable relations. First, we compute:
\begin{align*}
    u(h)_i^{i+n} &= (dg)_i^{i+n} \\
    &= (-1)^{i+n}dg_i^{i+n} + u(b_k^{i+n}) g_i^k - g_k^{i+n} u(a_i^k).
\end{align*}
We may substitute $g_i^k$ using \eqref{eq:gij_inductivehypothesis}. We find:
\begin{equation} \label{eq:u(h)i_formula}
    u(h)_i^{i+n} =  (-1)_i^{i+n} dg_i^{i+n} + u(b_k^{i+n})u(f_i^k) - u(f_k^{i+n})u(a_i^k) + X_i^{i+n}, \tag{$\ast3$}
\end{equation}
where
\begin{align*}
    X_i^{i+n} = u(b_k^{i+n})&((-1)^k d\alpha_i^k + u(b_s^k)\alpha_i^s + \alpha_s^k u(a_i^s)) - ((-1)^{i+n} d\alpha_k^{i+n} + u(b_s^{i+n}) \alpha_k^s + \alpha_s^{i+n} u(a_k^s))u(a_i^k) \\
     = (-1)^k & u(b_k^{i+n})d\alpha_i^k  + u(b_k^{i+n})u(b_s^k)\alpha_i^s + u(b_k^{i+n})\alpha_s^k u(a_i^s) \\
    &  -  (-1)^{i+n}d\alpha_k^{i+n} u(a_i^k)  - u(b_s^{i+n})\alpha_k^s u(a_i^k) - \alpha_s^{i+n} u(a_k^s)u(a_i^k).
\end{align*}
The term $u(b_k^{i+n})\alpha_s^k u(a_i^s)$ cancels out, and we may also use the formulas:
\begin{align*}
    (-1)^{i+n}du(b_k^{i+n}) + u(b^{i+n}_s)u(b^s_k)  &=0, \\
    (-1)^{s}du(a^s_i) + u(a^s_k)u(a^k_i) &= 0,
\end{align*}
and we find that:
\begin{align*}
    X_i^{i+n} = (-1)^k & u(b_k^{i+n})d\alpha_i^k - (-1)^{i+n} du(b_s^{i+n})\alpha_i^s - (-1)^{i+n}d\alpha_k^{i+n} u(a_i^k) + (-1)^s \alpha_s^{i+n} du(a_i^s)  \\
    = (-1)&^{i+n-1} (du(b_s^{i+n})\alpha_i^s + (-1)^{k-i-n+1} u(b_k^{i+n})d\alpha_i^k) \\ 
    &+ (-1)^{i+n-1}(d\alpha_k^{i+n}u(a_i^k) + (-1)^{k-i-n+1} \alpha_k^{i+n} du(a_i^k)).
\end{align*}
Applying the Leibniz rule, we finally find out that
\[
X_i^{i+n} = - (-1)^{i+n}d(u(b_k^{i+n})\alpha_i^k + \alpha_k^{i+n}u(a_i^k)).
\]
We substitute this in \eqref{eq:u(h)i_formula} and we find:
\begin{equation} \label{eq:u(h)i_formula2}
    u(h)_i^{i+n} = u(b_k^{i+n} f_i^k - f_k^{i+n} a_i^k) + (-1)^{i+n} d(g_i^{i+n} - u(b_k^{i+n})\alpha_i^k - \alpha_k^{i+n}u(a_i^k)) \tag{$\ast4$}
\end{equation}
Using that $u$ is quasi-fully faithful, we deduce that
\begin{equation} \label{eq:hi_formula3}
h_i^{i+n} = (-1)^{i+n}d\phi_i^{i+n} + b_k^{i+n}f_i^k - f_k^{i+n}a_i^k, \tag{$\ast5$}
\end{equation}
for some $\phi_i^{i+n}$. We apply $u$ to the above equation and compare the result with \eqref{eq:u(h)i_formula2}. We find:
\[
du(\phi_i^{i+n}) = d(g_i^{i+n} - (u(b_k^{i+n})\alpha_i^k + \alpha_k^{i+n} u(a_i^k))).
\]
Using again that $u$ is quasi-fully faithful, we find $x_i^{i+n}$ such that $dx_i^{i+n}=0$ and
\[
u(\phi_i^{i+n} - x_i^{i+n}) = g_i^{i+n} - ((-1)^{i+n}d\alpha_i^{i+n} + u(b_k^{i+n})\alpha_i^k + \alpha_k^{i+n}u(a_i^k)),
\]
for a suitable $\alpha_i^{i+n}$. Finally, by setting $f_i^{i+n} = \phi_i^{i+n}-x_i^{i+n}$ and observing that $d\phi_i^{i+n} = d f_i^{i+n}$, we finally deduce from \eqref{eq:hi_formula3} and the above equation that:
\begin{align*}
    h_i^{i+n} &= (-1)^{i+n}df_i^{i+n} + b_k^{i+n}f_i^k - f_k^{i+n}a_i^k, \\
    u(f_i^{i+n}) & = g_i^{i+n} - ((-1)^{i+n}d\alpha_i^{i+n} + u(b_k^{i+n})\alpha_i^k + \alpha_k^{i+n}u(a_i^k)).
\end{align*}
The induction is now complete.
\end{proof}

\begin{proposition} \label{prop:Tw_preserve_quasiequivalences}
Let $u \colon \varcat A \to \varcat B$ be a quasi-equivalence. Then, $\Tw(u)$ is also a quasi-equivalence.
\end{proposition}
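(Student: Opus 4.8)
The plan is to invoke Lemma~\ref{lemma:tw_quasifullyfaithful}, which already gives that $\Tw(u)$ is quasi-fully faithful; what remains is essential surjectivity on homotopy categories, and I would establish this by reconstructing an arbitrary twisted complex over $\varcat B$ from its brutal truncations (\S\ref{subsection:twcompl_brutaltruncations}) and building a matching twisted complex over $\varcat A$ piece by piece.

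First I would reduce to the case in which $\varcat A$ and $\varcat B$ are strictly concentrated in nonpositive degrees and have strict zero objects: since $\Tw(u) = \Tw(\tau_{\leq 0} u_{\{0\}})$ by definition and $\tau_{\leq 0} u_{\{0\}}$ is again a quasi-equivalence between dg-categories of this kind, nothing is lost. By Lemma~\ref{lemma:tw_quasifullyfaithful}, $H^0(\Tw(u))$ is fully faithful, so it is enough to show that every $Y^\bullet = (Y^i, y_i^j) \in \Tw(\varcat B)$ is isomorphic in $H^0(\Tw(\varcat B))$ to $\Tw(u)(X^\bullet)$ for some $X^\bullet \in \Tw(\varcat A)$. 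Choose, using essential surjectivity of $u$, objects $X^i \in \varcat A$ (with $X^i$ the strict zero object whenever $Y^i$ is) and homotopy equivalences $Y^i \to u(X^i)$ in $H^0(\varcat B)$.

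The heart of the argument is a three-layer induction. \emph{(i) Bounded windows.} For $N \le M$ I would construct twisted complexes $X^\bullet_{[N,M]} \in \Tw(\varcat A)$, concentrated in degrees $[N,M]$, together with isomorphisms $\Tw(u)(X^\bullet_{[N,M]}) \cong \sigma_{[N,M]} Y^\bullet$ compatible with the brutal-truncation maps, by induction on $M-N$. For the inductive step, Proposition~\ref{prop:pretriangle_cotstruct} (applied to $\sigma_{[N,M+1]}Y^\bullet$ with $n = M+1$) exhibits $\sigma_{[N,M+1]}Y^\bullet$ as the cone of the twisted differential $(\sigma_{[N,M]}Y^\bullet)[-1] \to Y^{M+1}$, the latter placed in degree $M+1$; I would transport this morphism to $H^0(\Tw(\varcat A))$ using full faithfulness of $H^0(\Tw(u))$, form its cone $X^\bullet_{[N,M+1]}$ in $\Tw(\varcat A)$ — which exists by Lemma~\ref{lemma:mappingcone_twistedcomplexes}, the two summands appearing there never overlapping, so that only strict zero objects are adjoined and $\Tw(u)$ preserves the cone — and then apply Lemma~\ref{lemma:iso_cones} together with Remark~\ref{remark:iso_cones_boundaries} to obtain the desired isomorphism, compatible with the truncation maps by the commutative diagrams recorded there. \emph{(ii) Bounded below.} By Corollary~\ref{corollary:brutaltrunc_limitcolimit}, $\sigma_{\geq N} Y^\bullet \cong \varprojlim_M \sigma_{[N,M]} Y^\bullet$; the tower $(X^\bullet_{[N,M]})_M$ consists of brutal left-truncations of one another, hence assembles to a twisted complex $X^\bullet_{\geq N} \in \Tw(\varcat A)$, and the compatible isomorphisms assemble, via Corollary~\ref{corollary:truncations_morphisms_colimits}, to a componentwise isomorphism $\Tw(u)(X^\bullet_{\geq N}) \to \sigma_{\geq N} Y^\bullet$, which is then an isomorphism in $H^0(\Tw(\varcat B))$ by Proposition~\ref{proposition:iso_twisted_components}. \emph{(iii) Unbounded.} Finally, starting from $X^\bullet_{\geq 0}$ of layer (ii) and adjoining one object at a time at the bottom — following Construction~\ref{construction:twistedcompl_construction}, at each step transporting the relevant morphism via $H^0(\Tw(u))$ and correcting the cone by Lemma~\ref{lemma:iso_cones} (dually to layer (i)) — I would produce a compatible chain $(X^\bullet_{\geq -k})_{k \geq 0}$ and set $X^\bullet := \varinjlim_k X^\bullet_{\geq -k}$. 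Since $\Tw(u)$ commutes with brutal truncations, $\Tw(u)(X^\bullet) \cong \varinjlim_k \Tw(u)(X^\bullet_{\geq -k})$ by Corollary~\ref{corollary:brutaltrunc_limitcolimit}, and Corollary~\ref{corollary:truncations_morphisms_colimits} together with Proposition~\ref{proposition:iso_twisted_components} turns the compatible system of isomorphisms into the required isomorphism $\Tw(u)(X^\bullet) \cong Y^\bullet$.

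The main obstacle is entirely one of bookkeeping rather than of idea: at every inductive step, in both the ``bounded window'' induction of layer (i) and the ``bottom adjunction'' induction of layer (iii), one has to carry along not merely isomorphisms of cones but their compatibility with the truncation and inclusion maps — precisely the extra output of Lemma~\ref{lemma:iso_cones} and Remark~\ref{remark:iso_cones_boundaries} — so that the isomorphisms survive passage to the sequential (co)limits; once this is in place, identifying the limit morphism as a componentwise isomorphism and concluding via Proposition~\ref{proposition:iso_twisted_components} is immediate.
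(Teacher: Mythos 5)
Your proposal is correct and follows essentially the same route as the paper's proof: quasi-full-faithfulness is quoted from Lemma \ref{lemma:tw_quasifullyfaithful}, and essential surjectivity is obtained by building the preimage window by window via the brutal-truncation pretriangles of Proposition \ref{prop:pretriangle_cotstruct}, transporting attaching maps through $H^0(\Tw(u))$, correcting cones with Lemma \ref{lemma:iso_cones} and Remark \ref{remark:iso_cones_boundaries}, and assembling the strictly compatible isomorphisms into a componentwise isomorphism recognized by Proposition \ref{proposition:iso_twisted_components}. The only cosmetic difference is that you phrase the first induction over arbitrary windows $[N,M]$ while the paper works with $[0,n]$ before adjoining objects at the bottom; the two-stage structure and all key ingredients coincide.
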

\begin{proof}
By Lemma \ref{lemma:tw_quasifullyfaithful}, we already know that $\Tw(u)$ is quasi-fully faithful. We need to prove that
\[
H^0(\Tw(u)) \colon H^0(\Tw(\varcat A)) \to H^0(\Tw(\varcat B))
\]
is essentially surjective. We will sometimes abuse notation and write $u$ instead of $\Tw(u)$. Let $B^\bullet = (B^i,b_i^j) \in \Tw(\varcat B)$. Using that $H^0(u)$ is essentially surjective, we may choose an isomorphism
\[
f_{0,0} \colon B^0 = B^\bullet_{[0,0]} \to u(A^\bullet_{0,0})
\]
in $H^0(\Tw(\varcat A))$, where $A^\bullet_{0,0}=A^0$ is a twisted complex concentrated in degree $0$. Consider the following diagram:
\begin{equation} \label{equation:esssurj_proof_1} \tag{$\ast$}
\begin{tikzcd}[ampersand replacement=\&]
	{B^1[-1]} \& {B^\bullet_{[0,1]}} \& {B^\bullet_{[0,0]}} \& {B^1} \\
	{u(A^1)[-1]} \& {u(A^\bullet_{0,1})} \& {u(A^\bullet_{0,0})} \& {u(A^1).}
	\arrow[from=1-1, to=1-2]
	\arrow[from=1-2, to=1-3]
	\arrow[from=1-3, to=1-4]
	\arrow[from=1-1, to=2-1]
	\arrow[from=2-1, to=2-2]
	\arrow[from=2-2, to=2-3]
	\arrow[dashed, from=2-3, to=2-4]
	\arrow[dashed, from=1-4, to=2-4]
	\arrow["{f_{0,0}}", from=1-3, to=2-3]
	\arrow["{f_{0,1}}", dotted, from=1-2, to=2-2]
\end{tikzcd}
\end{equation}
The idea is: we find an isomorphism $B^1 \to u(A^1)$ in $H^0(\varcat B)$, for some object $A^1 \in \varcat A$, which we view as a twisted complex concentrated in degree $0$. Then, we may find a morphism $u(A^\bullet_{0,0}) \to u(A^1)$ which makes the rightmost square commutative in $H^0(\Tw(\varcat B))$. We already know that $\Tw(u)$ is quasi-fully faithful, hence we can find a closed degree $0$ morphism
\[
A^\bullet_{0,0} \to A^1
\]
whose cohomology class maps to $u(A^\bullet_{0,0}) \to u(A^1)$. The twisted complex $A^\bullet_{0,1}$ is defined as
\[
A^0 \to A^1,
\]
and it sits in the ``rotated'' pretriangle
\[
A^1[-1] \to A^\bullet_{0,1} \to A^\bullet_{0,0} \to A^1,
\]
recall in particular Construction \ref{construction:twistedcompl_construction}. In the above diagram \eqref{equation:esssurj_proof_1} both rows are ``rotated'' pretriangles; the bottom row is obtained by applying $u=\Tw(u)$. We may choose a degree $-1$ homotopy $B^\bullet_{[0,0]} \to u(A^1)$ detecting the commutativity of the rightmost square in $H^0(\Tw(\varcat B))$, and use this to define a morphism
\[
f_{0,1} \colon B^\bullet_{[0,1]} \to u(A^\bullet_{0,1}),
\]
which will be an isomorphism in $H^0(\Tw(\varcat B))$. In particular, its components $(f_{0,1})_i^i$ are isomorphisms in $H^0(\varcat B)$ (cf. Proposition \ref{proposition:iso_twisted_components}). the middle and left squares of \eqref{equation:esssurj_proof_1} are strictly commutative. We can iterate this construction and construct a twisted complex $A^\bullet_{0,n}$ concentrated in degrees from $0$ to $n$ for all $n \geq 0$, fitting in a diagram:
\[
\begin{tikzcd}[ampersand replacement=\&]
	{B^n[-n]} \& {B^\bullet_{[0,n]}} \& {B^\bullet_{[0,n-1]}} \& {B^n[-n+1]} \\
	{u(A^n)[-n]} \& {u(A^\bullet_{0,n})} \& {u(A^\bullet_{0,n-1})} \& {u(A^n)[-n+1].}
	\arrow[from=1-1, to=1-2]
	\arrow[from=1-2, to=1-3]
	\arrow[from=1-3, to=1-4]
	\arrow[from=1-1, to=2-1]
	\arrow[from=2-1, to=2-2]
	\arrow[from=2-2, to=2-3]
	\arrow[dashed, from=2-3, to=2-4]
	\arrow[dashed, from=1-4, to=2-4]
	\arrow["{f_{0,n-1}}", from=1-3, to=2-3]
	\arrow["{f_{0,n}}", dotted, from=1-2, to=2-2]
\end{tikzcd}
\]
The morphism $f_{0,n}$ is an isomorphism in $H^0(\Tw(\varcat B))$, and in particular its components $(f_{0,n})_i^i$ are isomorphisms in $H^0(\varcat B)$; the middle and left squares of the above diagram are strictly commutative; the morphism $u(A^\bullet_{0,n-1}) \to u(A^n)[-n+1]$ comes from a closed degree $0$ morphism $A^\bullet_{0,n-1} \to A^n[-n+1]$.

Now, again recalling Construction  \ref{construction:twistedcompl_construction}, we can define a twisted complex $A^\bullet_0$, concentrated in nonnegative degrees, such that $\sigma_{\leq n} A^\bullet_0 = A^\bullet_{0,n}$. In a similar fashion, the morphisms $f_{0,n}$ can be directly used to define a closed degree $0$ morphism
\[
f_0 \colon B^\bullet_{\geq 0} \to u(A^\bullet_0),
\]
namely $f_0 = \varprojlim_n f_{0,n}$. By construction, the components $(f_0)_i^i$ are isomorphisms in $H^0(\varcat B)$, hence it is an isomorphism in $H^0(\Tw(\varcat B))$ (Proposition \ref{proposition:iso_twisted_components}).

Next, we use a similar iterative argument in order to construct a family of twisted complexes $A^\bullet_{-n}$, concentrated in degrees $\geq -n$ and fitting in the following diagram:
\[\begin{tikzcd}[ampersand replacement=\&]
	{B^{-n}[n-1]} \& {B^\bullet_{\geq -n+1}} \& {B^\bullet_{\geq -n}} \& {B^{-n}[n]} \\
	{u(A^{-n})[n-1]} \& {u(A^\bullet_{-n+1})} \& {u(A^\bullet_{-n})} \& {u(A^{-n})[n].}
	\arrow[from=1-1, to=1-2]
	\arrow[from=1-2, to=1-3]
	\arrow[from=1-3, to=1-4]
	\arrow[dashed, from=1-1, to=2-1]
	\arrow[dashed, from=2-1, to=2-2]
	\arrow[from=2-2, to=2-3]
	\arrow[from=2-3, to=2-4]
	\arrow["{f_{-n+1}}", from=1-2, to=2-2]
	\arrow["{f_{-n}}", dotted, from=1-3, to=2-3]
	\arrow[from=1-4, to=2-4]
\end{tikzcd}\]
The middle and right squares of the above diagram are strictly commutative, the leftmost square is commutative in $H^0(\Tw(\varcat B))$. The vertical arrows are isomorphisms in $H^0(\Tw(\varcat B))$. we may then define a twisted complex $A^\bullet \in \Tw(\varcat A)$ such that $\sigma_{\geq -n} A^\bullet = A^\bullet_{-n}$ (again, see Construction \ref{construction:twistedcompl_construction}). Hence, we may define a morphism
\[
f \colon B^\bullet \to u(A^\bullet),
\]
such that $f= \varinjlim_n f_{-n}$, and in particular the components $f_i^i$ of $f$ are isomorphisms in $H^0(\Tw(\varcat B))$. By Proposition \ref{proposition:iso_twisted_components} we conclude that $f$ is an isomorphism, and this finished the proof.
\end{proof}
\begin{remark}
It is clear that Lemma \ref{lemma:tw_quasifullyfaithful} and Proposition \ref{prop:Tw_preserve_quasiequivalences} are still true if we replace $\Tw(-)$ with $\Tw^+(-)$ or $\Tw^-(-)$.
\end{remark}

\subsection{Products, coproducts and pretriangulated structure on twisted complexes} \label{subsec:tw_addsumprod}

If $\varcat A$ has direct sums, we can also form mapping cones of closed degree $0$ morphisms of twisted complexes:
\begin{proposition} \label{proposition:tw_stronglypretr}
Let $\varcat A$ be a dg-category which has strict finite direct sums (including zero objects). Then, the dg-category $\Tw(\varcat A)$ is strongly pretriangulated.
\end{proposition}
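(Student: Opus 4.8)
The plan is to check directly that $\Tw(\varcat A)$ is closed under pretriangles, which by the characterization recalled in \S\ref{subsec:pretr_dgcat} is equivalent to being strongly pretriangulated. As a preliminary reduction I would pass to the case where $\varcat A$ is strictly concentrated in nonpositive degrees and has strict zero objects: by definition $\Tw(\varcat A)=\Tw(\tau_{\leq 0}\varcat A_{\{0\}})$, and $\tau_{\leq 0}\varcat A_{\{0\}}$ is strictly concentrated in nonpositive degrees, has strict zero objects by Lemma \ref{lemma:dgcat_trunc_formalzeroobj}, and inherits strict finite direct sums from $\varcat A$ by Lemma \ref{lemma:trunc_directsumclosure}. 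So we may assume $\Tw(\varcat A)$ is the ``direct'' construction. Along the way one should also record that the all-zero twisted complex is a strict zero object of $\Tw(\varcat A)$.

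Next I would treat shifts. For every $X^\bullet\in\Tw(\varcat A)$ and every $m\in\mathbb Z$ the shift $X^\bullet[m]$ is already defined by \eqref{eq:twistedcomplex_shift}, and the reindexing visibly preserves both one-sidedness and the defining Maurer--Cartan equation, so $X^\bullet[m]\in\Tw(\varcat A)$. It then remains to exhibit the shifted identity morphisms $1_{(X^\bullet,n,m)}\colon X^\bullet[n]\to X^\bullet[m]$ — with nonzero components the (signed) identities $X^{i+n}=X^{j+m}$ — and to verify closedness together with $1_{(X^\bullet,m,n)}\circ 1_{(X^\bullet,n,m)}=1_{X^\bullet[n]}$. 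This is a routine computation with the differential of $\Tw(\varcat A)$; the only delicate point is the sign bookkeeping coming from the factor $(-1)^n$ in \eqref{eq:twistedcomplex_shift}.

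The substance of the proof is the existence of cones, and here the work is already isolated in Lemma \ref{lemma:mappingcone_twistedcomplexes}. Given a closed degree $0$ morphism $f\colon X^\bullet\to Y^\bullet$ in $\Tw(\varcat A)$, the hypothesis that $\varcat A$ has strict finite direct sums guarantees that each $X^{i+1}\oplus Y^i$ exists in $\varcat A$, so Lemma \ref{lemma:mappingcone_twistedcomplexes} applies and yields an object $\cone(f)^\bullet\in\Tw(\varcat A)$ together with the closed degree $0$ morphisms $i,p,j,s$ realizing the pretriangle
\[
X^\bullet \xrightarrow{\ f\ } Y^\bullet \xrightarrow{\ j\ } \cone(f) \xrightarrow{\ p\ } X^\bullet[1].
\]
Combining this with the shift structure from the previous step shows that $\Tw(\varcat A)$ is closed under pretriangles, hence strongly pretriangulated.

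I do not expect a genuine obstacle: the cone construction and the verification that it \emph{is} a cone are packaged in Lemma \ref{lemma:mappingcone_twistedcomplexes}, and the rest is the (sign-sensitive but mechanical) check that the explicit shifts of \eqref{eq:twistedcomplex_shift} equipped with the shifted identities satisfy the axioms of a functorial shift. The most error-prone part is simply keeping the signs consistent between the differential formula for morphisms of twisted complexes, the shift $x[n]_i^j=(-1)^n x_{i+n}^{j+n}$, and the matrix form of the cone differential.
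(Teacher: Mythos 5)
Your proposal is correct and follows essentially the same route as the paper: the paper's proof likewise observes that $\Tw(\varcat A)$ is always closed under shifts and that cones exist by Lemma \ref{lemma:mappingcone_twistedcomplexes} thanks to the strict finite direct sums in $\varcat A$. The extra bookkeeping you flag (the reduction to the strictly nonpositive case and the verification of the shifted identities) is left implicit in the paper but is exactly the routine check one would expect.
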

\begin{proof}
The dg-category $\Tw(\varcat A)$ is always closed under shifts. Cones and pretriangles are described in Lemma \ref{lemma:mappingcone_twistedcomplexes}, and they always exist thanks to the fact that $\varcat A$ has strict direct sums.
\end{proof}
The dg-category $\Tw(\varcat A)$ is most interesting when $\varcat A$ has finite direct sums (including zero objects) and is also concentrated in nonpositive degrees. We can also require these properties to hold cohomologically, namely:
\begin{itemize}
    \item $\varcat A$ is such that $H^0(\varcat A)$ is additive.
    \item $\varcat A$ is cohomologically concentrated in nonpositive degrees.
\end{itemize}
\begin{lemma} \label{lemma:dgcat_replace_strictadditive}
Let $\varcat A$ be a dg-category such that $H^0(\varcat A)$ is additive and $\varcat A$ is cohomologically concentrated in nonpositive degrees. Then, there is a dg-category $\varcat A'$ strictly concentrated in nonpositive degrees and having strict finite direct sums and zero objects, and a chain of quasi-equivalences:
\[
\varcat A' \xleftarrow{\approx} \varcat A'' \xrightarrow{\approx} \varcat A,
\]
for a suitable dg-category $\varcat A''$. In particular, applying Proposition \ref{prop:Tw_preserve_quasiequivalences}, we obtain a chain of quasi-equivalences:
\begin{equation}
\Tw(\varcat A') \xleftarrow{\approx} \Tw(\varcat A'') \xrightarrow{\approx} \Tw(\varcat A),
\end{equation}
\end{lemma}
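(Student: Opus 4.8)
The plan is to first truncate $\varcat A$ so that it becomes strictly concentrated in nonpositive degrees, and then close it up under finite direct sums; the dg-category sitting in the middle of the zigzag will be $\varcat A'' := \tau_{\leq 0}\varcat A$, and the leftmost one will be its closure $\varcat A' := (\tau_{\leq 0}\varcat A)^\oplus$ from Lemma~\ref{lemma:dgcat_strictadditive_closure}.

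First I would check that the natural dg-functor $i_{\leq 0} \colon \varcat A'' = \tau_{\leq 0}\varcat A \to \varcat A$ is a quasi-equivalence. It is the identity on objects, hence essentially surjective on homotopy categories; on each hom-complex it is the inclusion $\tau_{\leq 0}\varcat A(A,B) \hookrightarrow \varcat A(A,B)$, which induces isomorphisms on $H^i$ for $i \leq 0$ by the very definition of the left truncation, while both sides have vanishing cohomology in positive degrees — the target because $\varcat A$ is cohomologically concentrated in nonpositive degrees, the source by definition of $\tau_{\leq 0}$. So $i_{\leq 0}$ is a quasi-isomorphism on all hom-complexes, and therefore $\varcat A'' \xrightarrow{\approx} \varcat A$ is a quasi-equivalence. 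Note also that $\varcat A''$ is strictly concentrated in nonpositive degrees by construction.

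Next I would set $\varcat A' := (\varcat A'')^\oplus$ as in Lemma~\ref{lemma:dgcat_strictadditive_closure}; by construction it has strict finite direct sums and strict zero objects. Using Lemma~\ref{lemma:trunc_directsumclosure} to identify $(\tau_{\leq 0}\varcat A)^\oplus$ with $\tau_{\leq 0}(\varcat A^\oplus)$, we see that $\varcat A'$ is strictly concentrated in nonpositive degrees, being a left truncation. Since $H^0(\varcat A'') = H^0(\tau_{\leq 0}\varcat A) = H^0(\varcat A)$ is additive by hypothesis, Lemma~\ref{lemma:dgcat_strictadditive_closure} gives that the inclusion $\varcat A'' \hookrightarrow \varcat A'$ is a quasi-equivalence. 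This produces the chain $\varcat A' \xleftarrow{\approx} \varcat A'' \xrightarrow{\approx} \varcat A$ with $\varcat A'$ having all the required properties.

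Finally I would apply the functor $\Tw(-)$ to this zigzag and invoke Proposition~\ref{prop:Tw_preserve_quasiequivalences} to conclude that $\Tw(\varcat A') \xleftarrow{\approx} \Tw(\varcat A'') \xrightarrow{\approx} \Tw(\varcat A)$ is again a chain of quasi-equivalences. There is no serious obstacle here: all the machinery is already in place, and the only points requiring a moment's attention are the verification that $i_{\leq 0}$ is a quasi-equivalence (which is exactly where cohomological concentration in nonpositive degrees is used) and the observation that passing to the $(-)^\oplus$-closure of $\tau_{\leq 0}\varcat A$ does not reintroduce positive-degree cohomology.
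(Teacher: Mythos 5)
Your proposal is correct and follows exactly the same route as the paper: set $\varcat A'' = \tau_{\leq 0}\varcat A$, check that $i_{\leq 0}$ is a quasi-equivalence using the cohomological concentration hypothesis, and then pass to $\varcat A' = (\varcat A'')^\oplus$ via Lemma \ref{lemma:dgcat_strictadditive_closure}, finishing with Proposition \ref{prop:Tw_preserve_quasiequivalences}. Your write-up actually spells out the two verification points in more detail than the paper does.
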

\begin{proof}
We take $\varcat A'' = \tau_{\leq 0} \varcat A$, which is now strictly concentrated in nonpositive degrees. The natural dg-functor
\[
\varcat A'' \to \varcat A
\]
is a quasi-equivalence. Then, we take $\varcat A' = (\varcat A'')^\oplus$, namely, the closure of $\varcat A'$ under strict finite direct sums and strict zero objects. $\varcat A''$ is still strictly concentrated in nonpositive degrees (Lemma \ref{lemma:trunc_directsumclosure}). There is an inclusion dg-functor
\[
\varcat A' \to \varcat A''
\]
which is a quasi-equivalence, since $H^0(\varcat A'')$ is additive (Lemma \ref{lemma:dgcat_strictadditive_closure}).
\end{proof}
\begin{corollary} \label{corollary:TwApretriangulated}
Let $\varcat A$ be a dg-category such that $H^0(\varcat A)$ is additive and $\varcat A$ is cohomologically concentrated in nonpositive degrees. Then, $\Tw(\varcat A)$ is a pretriangulated dg-category.
\end{corollary}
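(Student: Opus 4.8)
The strategy is to transport the strongly pretriangulated structure obtained in Proposition~\ref{proposition:tw_stronglypretr} along a chain of quasi-equivalences, using the replacement lemma.

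First I would apply Lemma~\ref{lemma:dgcat_replace_strictadditive} to $\varcat A$: since $H^0(\varcat A)$ is additive and $\varcat A$ is cohomologically concentrated in nonpositive degrees, the hypotheses of that lemma hold, so we obtain a dg-category $\varcat A'$ which is strictly concentrated in nonpositive degrees and has strict finite direct sums and zero objects, together with a chain of quasi-equivalences $\Tw(\varcat A') \xleftarrow{\approx} \Tw(\varcat A'') \xrightarrow{\approx} \Tw(\varcat A)$ (this chain being part of the statement of the lemma, via Proposition~\ref{prop:Tw_preserve_quasiequivalences}). By Proposition~\ref{proposition:tw_stronglypretr} the dg-category $\Tw(\varcat A')$ is then strongly pretriangulated, and in particular pretriangulated.

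It therefore suffices to know that being pretriangulated is invariant under quasi-equivalence, so that this property can be read off $\Tw(\varcat A')$ and carried to $\Tw(\varcat A)$ through the two quasi-equivalences above. I would justify this invariance in the standard way: $\pretr(-)$ descends to a functor on $\Hqe$ and the embeddings $\cat C \hookrightarrow \pretr(\cat C)$ are natural, so a quasi-equivalence $\cat C \to \cat D$ induces a quasi-equivalence $\pretr(\cat C) \to \pretr(\cat D)$ fitting into a commuting square with the two embeddings; two-out-of-three for quasi-equivalences then shows that $\cat C \hookrightarrow \pretr(\cat C)$ is a quasi-equivalence precisely when $\cat D \hookrightarrow \pretr(\cat D)$ is. Equivalently, on homotopy categories $\cat C$ is pretriangulated iff $H^0(\cat C)$ is a strictly full triangulated subcategory of $H^0(\pretr(\cat C))$, a property transported by the equivalence induced on $H^0$. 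Since this invariance is well known and the remaining steps are direct citations, I do not expect any genuine obstacle here: the whole content of the corollary is already packaged in Lemma~\ref{lemma:dgcat_replace_strictadditive} and Proposition~\ref{proposition:tw_stronglypretr}.
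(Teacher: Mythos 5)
Your proposal is correct and follows exactly the paper's argument: replace $\varcat A$ by a strictly nonpositive, strictly additive $\varcat A'$ via Lemma~\ref{lemma:dgcat_replace_strictadditive}, invoke Proposition~\ref{proposition:tw_stronglypretr} for $\Tw(\varcat A')$, and transport pretriangulatedness along the quasi-equivalences. The only difference is that you spell out the (standard) invariance of pretriangulatedness under quasi-equivalence, which the paper leaves implicit.
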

\begin{proof}
Directly applying the above Lemma \ref{lemma:dgcat_replace_strictadditive}, we find a quasi-equivalence $\Tw(\varcat A) \cong \Tw(\varcat A')$, where $\varcat A'$ is strictly concentrated in nonpositive degrees and has strict finite direct sums and zero objects. We know from Proposition \ref{proposition:tw_stronglypretr} that $\Tw(\varcat A')$ is strongly pretriangulated. Hence, $\Tw(\varcat A)$ is pretriangulated.
\end{proof}
Thanks to the above Lemma \ref{lemma:dgcat_replace_strictadditive}, we can work, without loss of generality, with twisted complexes $\Tw(\varcat A)$ on a dg-category $\varcat A$ which is strictly concentrated in nonpositive degrees and has strict direct sums and zero objects.

If $\varcat A$ has strict direct sums or products, then it is immediate to show that the same holds for $\Tw(\varcat A)$:
\begin{lemma} \label{lemma:tw_strict_directsumsprods}
Let $\varcat A$ be a dg-category strictly concentrated in nonpositive degrees and let $\kappa$ be a regular cardinal. Assume that $\varcat A$ has strict direct sums (respectively, strict direct products) indexed by sets of cardinality $\leq \kappa$. Then, the same is true for $\Tw(\varcat A)$.
\end{lemma}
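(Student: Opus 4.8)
The plan is to write down the (co)product of twisted complexes explicitly, componentwise, with the \emph{diagonal} twisted differential, and then check the required universal property at the level of Hom-complexes. I will treat coproducts first and obtain products by duality.

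Fix a family $\{X_s^\bullet = (X_s^i, (x_s)_i^j)\}_{s \in S}$ of objects of $\Tw(\varcat A)$ with $|S| \leq \kappa$. Since $\varcat A$ has strict direct sums indexed by sets of cardinality $\leq \kappa$, for each $i \in \mathbb Z$ we have the object $\bigoplus_{s} X_s^i \in \varcat A$, with closed degree $0$ coprojections $\iota_s^{(i)} \colon X_s^i \to \bigoplus_s X_s^i$ such that $\varcat A(\bigoplus_s X_s^i, B) \xrightarrow{\sim} \prod_s \varcat A(X_s^i, B)$ for all $B \in \varcat A$. Define $(\bigoplus_s X_s)^\bullet$ by $(\bigoplus_s X_s)^i = \bigoplus_s X_s^i$ together with the unique morphisms $\xi_i^j \colon \bigoplus_s X_s^i \to \bigoplus_s X_s^j$ satisfying $\xi_i^j \circ \iota_s^{(i)} = \iota_s^{(j)} \circ (x_s)_i^j$ for all $s$. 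One checks directly that this is a twisted complex: by one-sidedness of each $X_s^\bullet$ the relevant sums are finite and $\xi$ is again one-sided, and after precomposing the twisted-complex equation $(-1)^j d\xi_i^j + \sum_k \xi_k^j\xi_i^k = 0$ with each $\iota_s^{(i)}$ (using that the coprojections are closed) it reduces to the twisted-complex equation for $X_s^\bullet$; since a morphism out of a strict direct sum vanishes iff all its restrictions do, the equation holds.

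Next, the coprojections $\iota_s \colon X_s^\bullet \to (\bigoplus_s X_s)^\bullet$ with components $(\iota_s)_i^i = \iota_s^{(i)}$ and $(\iota_s)_i^j = 0$ for $i \neq j$ are closed degree $0$ morphisms of twisted complexes, by a short computation using $\xi_i^i = 0 = (x_s)_i^i$ together with the defining relation for $\xi$. To finish I verify the universal property: for any $Z^\bullet \in \Tw(\varcat A)$, precomposition with the $\iota_s$ defines a chain map $\Tw(\varcat A)\bigl((\textstyle\bigoplus_s X_s)^\bullet, Z^\bullet\bigr) \to \prod_s \Tw(\varcat A)(X_s^\bullet, Z^\bullet)$. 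Because $\varcat A$ is concentrated in nonpositive degrees, the one-sidedness constraint on morphisms is automatic and $\Tw(\varcat A)(Y^\bullet,Z^\bullet)^p = \prod_{i,j}\varcat A(Y^i,Z^j)^{i-j+p}$; under this identification the above map is, in each slot $(i,j)$, the isomorphism $\varcat A(\bigoplus_s X_s^i, Z^j) \xrightarrow{\sim} \prod_s \varcat A(X_s^i, Z^j)$, so after reindexing the iterated products it is an isomorphism of graded modules, and it is a chain map since the $\Tw$-differential is given by a formula that is linear in the components and compatible with the diagonal $\xi$ and the closed coprojections. Hence $(\bigoplus_s X_s)^\bullet$ is a strict coproduct in $\Tw(\varcat A)$.

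For products the cleanest route is duality via the identification $\opp{\Tw(\opp{\varcat A})} \cong \Tw(\varcat A)$ from \eqref{equation:twistedcomplexes_opposites}: if $\varcat A$ has strict products indexed by sets of cardinality $\leq \kappa$, then $\opp{\varcat A}$ is again strictly concentrated in nonpositive degrees and has strict coproducts indexed by such sets, so by the coproduct case $\Tw(\opp{\varcat A})$ has them, and therefore $\Tw(\varcat A) = \opp{\Tw(\opp{\varcat A})}$ has strict products indexed by sets of cardinality $\leq \kappa$. (One can equally repeat the argument above with $(\prod_s X_s)^i = \prod_s X_s^i$, diagonal twisted differential, projections $\pi_s$, and $\varcat A(Z^i, \prod_s X_s^j) \cong \prod_s \varcat A(Z^i, X_s^j)$.) There is no serious obstacle here; the only point requiring care is tracking the sign conventions in the twisted-complex differential and using one-sidedness to keep every sum finite, both of which make the verification routine.
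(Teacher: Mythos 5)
Your proof is correct and follows essentially the same route as the paper: the (co)product is constructed termwise with the diagonal twisted differential, and the verification (which the paper dismisses as straightforward) is carried out via the Hom-complex universal property; your use of the identification $\opp{\Tw(\opp{\varcat A})} \cong \Tw(\varcat A)$ for the product case is a minor stylistic variant of the paper's ``completely analogous''.
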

\begin{proof}
Let us deal first with direct sums. Let $\{A^\bullet_s : s \in I\}$ be a family of twisted complexes, where we write $A^\bullet_s = (A^i_s, (a_s)_i^j)$. We claim that $\bigoplus_{s \in I} A^\bullet_s = A^\bullet = (A^i,a_i^j)$ is described termwise:
\begin{align*}
    A^i & = \bigoplus_{s \in I}A^i_s, \\
    a_i^j & = \bigoplus_{s \in I} (a_s)_i^j.
\end{align*}
To check that $A^\bullet$ is a well-defined twisted complex and indeed the direct sum of the $A^\bullet_s$ is straightforward.

The case of direct products is completely analogous.
\end{proof}

Having dealt with strict direct sums, products and cones, and using that taking twisted complexes $\Tw(-)$ preserves quasi-equivalences, we may finally prove \emph{cohomological} closure of twisted complexes under such constructions.
\begin{proposition} \label{proposition:twisted_closed_prodcoprod}
Let $\varcat A$ be a dg-category, and let $\kappa$ be a regular cardinal. We assume that:
\begin{itemize}
    \item $\varcat A$ is cohomologically concentrated in nonpositive degrees.
    \item $H^0(\varcat A)$ is additive.
    \item The graded cohomology $H^*(\varcat A)$ has direct sums (and/or direct products) indexed by sets of cardinality $\leq \kappa$.
\end{itemize}
Then:
\begin{itemize}
    \item $\Tw(\varcat A)$ is a pretriangulated dg-category.
    \item $H^*(\Tw(\varcat A))$ has direct sums (and/or direct products) indexed by sets of cardinality $\leq \kappa$.
\end{itemize}
\end{proposition}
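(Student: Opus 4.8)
The first assertion requires no new work: the hypotheses on $\varcat A$ are exactly those of Corollary~\ref{corollary:TwApretriangulated}, which already yields that $\Tw(\varcat A)$ is pretriangulated. So the entire task is to produce the (co)products on $H^*(\Tw(\varcat A))$, and the plan is to reduce to a \emph{strict} situation where Lemma~\ref{lemma:tw_strict_directsumsprods} applies, and then transport the conclusion back along quasi-equivalences.

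Concretely, I would build a dg-category $\varcat B$, \emph{strictly} concentrated in nonpositive degrees and with \emph{strict} products (respectively coproducts) indexed by sets of cardinality $\leq \kappa$, together with a chain of quasi-equivalences relating $\varcat B$ to $\varcat A$. First pass to $\varcat A'' := \tau_{\leq 0}\varcat A$: since $\varcat A$ is cohomologically concentrated in nonpositive degrees, the dg-functor $i_{\leq 0}\colon \varcat A'' \to \varcat A$ is the identity on objects and an isomorphism on every hom-complex in cohomology, hence induces an isomorphism $H^*(\varcat A'') \cong H^*(\varcat A)$ of graded categories; in particular it is a quasi-equivalence and $H^*(\varcat A'')$ still has products (resp. coproducts) of size $\leq \kappa$. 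Next apply Lemma~\ref{lemma:homotopyproducts_closure} to $\varcat A''$ to obtain $\varcat C := (\varcat A'')^{\Pi}$ (resp. $(\varcat A'')^{\amalg}$), which is quasi-equivalent to $\varcat A''$, has strict products (resp. coproducts) of size $\leq \kappa$, and has strict zero objects; being quasi-equivalent to $\varcat A''$ and since ``cohomologically concentrated in nonpositive degrees'' is invariant under equivalence of graded cohomology categories, $\varcat C$ is again cohomologically concentrated in nonpositive degrees.

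The one point needing care is that $\varcat C$ is \emph{not} strictly concentrated in nonpositive degrees --- its objects involve $h$-injective (resp. $h$-projective) resolutions, which in general have components in positive degrees. I would fix this by setting $\varcat B := \tau_{\leq 0}\varcat C$: because $\varcat C$ is cohomologically concentrated in nonpositive degrees, $i_{\leq 0}\colon \varcat B \to \varcat C$ is a quasi-equivalence (same argument as for $\varcat A'' \to \varcat A$); by Lemma~\ref{lemma:trunc_prodclosure}, $\varcat B$ still has strict products (resp. coproducts) of size $\leq \kappa$; $\varcat B$ is now strictly concentrated in nonpositive degrees; and by Lemma~\ref{lemma:dgcat_trunc_formalzeroobj} (and Lemma~\ref{lemma:trunc_directsumclosure}) $\varcat B$ retains strict zero objects and strict finite direct sums. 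Composing the three quasi-equivalences, $\varcat B$ is isomorphic to $\varcat A$ in $\Hqe$.

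It then remains to harvest. By Lemma~\ref{lemma:tw_strict_directsumsprods}, $\Tw(\varcat B)$ has strict products (resp. coproducts) of size $\leq \kappa$; since $H^*(-)$ commutes with products and coproducts of chain complexes, $H^*(\Tw(\varcat B))$ has the corresponding (co)products. On the other hand, $\Tw$ preserves quasi-equivalences (Proposition~\ref{prop:Tw_preserve_quasiequivalences}) and hence descends to a functor on $\Hqe$, so the chain above gives $\Tw(\varcat A) \cong \Tw(\varcat B)$ in $\Hqe$, and in particular an equivalence of graded categories $H^*(\Tw(\varcat A)) \simeq H^*(\Tw(\varcat B))$; equivalences preserve (co)products, so $H^*(\Tw(\varcat A))$ has products (resp. coproducts) of size $\leq \kappa$. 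When $H^*(\varcat A)$ has both products and coproducts, one simply runs this argument twice --- once via $(-)^{\Pi}$ and once via $(-)^{\amalg}$ --- rather than attempting to exhibit a single model with both strict products and strict coproducts. The only genuine obstacle, flagged above, is reconciling ``strict products'' with ``strictly concentrated in nonpositive degrees'' (whence the truncation step); everything else is a matter of assembling the results of \S\ref{subsec:addsumprod}, \S\ref{subsection:truncations} and \S\ref{subsec:tw_addsumprod}.
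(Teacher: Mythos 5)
Your proposal is correct and follows essentially the same route as the paper: reduce to a strict model via Lemma \ref{lemma:homotopyproducts_closure} together with truncation, apply Lemma \ref{lemma:tw_strict_directsumsprods}, and transport the conclusion back along $\Tw$ of quasi-equivalences (Proposition \ref{prop:Tw_preserve_quasiequivalences}). The one point you flag --- that $\varcat A^{\Pi}$ (resp.\ $\varcat A^{\amalg}$) is not strictly concentrated in nonpositive degrees --- is handled in the paper implicitly by the convention $\Tw(\varcat B) := \Tw(\tau_{\leq 0}\varcat B_{\{0\}})$ together with Lemma \ref{lemma:trunc_prodclosure}, exactly as in your explicit truncation step.
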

\begin{proof}
The fact that $\Tw(\varcat A)$ is pretriangulated has been already proven in Corollary \ref{corollary:TwApretriangulated}.

Next, assume that $H^*(\varcat A)$ has direct sums and/or direct product as in the hypothesis. In the case of direct sums, we may use Lemma \ref{lemma:homotopyproducts_closure} and replace $\varcat A$ with the quasi-equivalent $\tau_{\leq 0} (\varcat A^\amalg)$, so that $\Tw(\varcat A)$ will be quasi-equivalent to $\Tw(\varcat A^\amalg) = \Tw(\tau_{\leq 0}(\varcat A^\amalg))$. From Lemma \ref{lemma:trunc_prodclosure} we know that $\tau_{\leq 0}(\varcat A^\amalg)$ has strict direct sums, hence we may apply the above Lemma \ref{lemma:tw_strict_directsumsprods} and see that $\Tw(\varcat A^\amalg)$ has strict direct sums. We conclude that $H^*(\Tw(\varcat A)) \cong H^*(\Tw(\varcat A^\amalg))$ has direct sums, as we claimed.

The case of direct products is dealt with similarly, by replacing $\varcat A$ with the quasi-equivalent $\varcat A^\Pi$.
\end{proof}
\begin{remark}
It is clear that every result in this subsection \S \ref{subsec:tw_addsumprod} can be directly adapted to twisted complexes bounded from above or below, namely, $\Tw^-(-)$ and $\Tw^+(-)$.
\end{remark}

\section{t-structures on twisted complexes} \label{section:tstructures}

We will now deal with t-structure and co-t-structures on the category of twisted complexes. We shall fix once and for all a dg-category $\varcat A$ which is cohomologically concentrated in nonpositive degrees and such that $H^0(\varcat A)$ is additive. We know from Corollary \ref{corollary:TwApretriangulated} that $\Tw(\varcat A)$ is a pretriangulated dg-category. Thanks to Lemma \ref{lemma:dgcat_replace_strictadditive}, we will be able to assume that $\varcat A$ is \emph{strictly} concentrated in nonpositive degrees and has strict direct sums and zero objects; in particular, $\Tw(\varcat A)$ will be a strongly pretriangulated dg-category (Proposition \ref{proposition:tw_stronglypretr}).

\subsection{The co-t-structure on $\Tw(\varcat A)$} \label{subsection:cotstruct_twisted}

Unbounded twisted complexes always come with a ``canonical'' co-t-structure given by the brutal truncations of twisted complexes.
\begin{proposition} \label{proposition:twistedcomplexes_cotstruct}
For $n \in \mathbb Z$, we define $\sigma_{\leq n} \Tw(\varcat A)$ and $\sigma_{\geq n} \Tw(\varcat A)$ as the full dg-subcategories of $\Tw(\varcat A)$ respectively spanned by twisted complexes $X^\bullet$ such that $X^i=0$ for $i> n$:
\[
\cdots \to X^{n-1} \to X^n \to 0 \to \cdots,
\]
and by twisted complexes $X^\bullet$ such that $X^i=0$ for $i< n$:
\[
\cdots \to 0 \to X^n \to X^{n+1} \to \cdots.
\]

Next, we define $\Tw(\varcat A)^w_{\leq n}$ and $\Tw(\varcat A)^w_{\geq n}$ respectively as the closures of $\sigma_{\leq n} \Tw(\varcat A)$ and $\sigma_{\geq n} \Tw(\varcat A)$ under isomorphisms in $H^0(\Tw(\varcat A)$. Then, the pair $(\Tw(\varcat A)^w_{\geq 0}, \Tw(\varcat A)^w_{\leq 0})$ is a co-t-structure on $\Tw(\varcat A)$, which we will call the \emph{canonical co-t-structure}. The intersection $\Tw(\varcat A)^w_{\geq 0} \cap \Tw(\varcat A)^w_{\leq 0}$ is quasi-equivalent to $\varcat A$.
\end{proposition}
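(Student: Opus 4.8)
The plan is to verify the co-t-structure axioms for the pair $(\Tw(\varcat A)^w_{\geq 0}, \Tw(\varcat A)^w_{\leq 0})$ on $H^0(\Tw(\varcat A))$ and then to identify the co-heart. First I would invoke Lemma \ref{lemma:dgcat_replace_strictadditive} to reduce to the case where $\varcat A$ is strictly concentrated in nonpositive degrees with strict finite direct sums and zero objects, so that $\Tw(\varcat A)$ is strongly pretriangulated (Proposition \ref{proposition:tw_stronglypretr}) and direct sums of twisted complexes are computed termwise (Lemma \ref{lemma:tw_strict_directsumsprods}).

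The routine axioms should go quickly. Both classes are additive and closed under isomorphism by construction, and stability under the appropriate shift is read off directly from the shift formula \eqref{eq:twistedcomplex_shift} (a complex concentrated in degrees $\geq 0$ stays so after $[-1]$, dually for $\leq 0$ and $[1]$). For orthogonality I would observe the sharper fact that $\Hom_{Z^0(\Tw(\varcat A))}(X^\bullet, Y^\bullet) = 0$ as soon as $X^\bullet$ is concentrated in degrees $\geq 1$ and $Y^\bullet$ in degrees $\leq 0$: a degree-$0$ component $f_i^j$ vanishes unless $i \leq j$, and $1 \leq i \leq j \leq 0$ is impossible; passing to isomorphism-closures does not affect vanishing in $H^0$. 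The weight decompositions are exactly the pretriangles of Proposition \ref{prop:pretriangle_cotstruct}: for each $X^\bullet$ and $n$ it provides a triangle $\sigma_{\geq n}X^\bullet \to X^\bullet \to \sigma_{\leq n-1}X^\bullet \to \sigma_{\geq n}X^\bullet[1]$ in $H^0(\Tw(\varcat A))$ with $\sigma_{\geq n}X^\bullet \in \Tw(\varcat A)^w_{\geq n}$ and $\sigma_{\leq n-1}X^\bullet \in \Tw(\varcat A)^w_{\leq n-1}$.

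The substantive point is closure under retracts, which I would prove for $\Tw(\varcat A)^w_{\geq 0}$ and deduce for $\Tw(\varcat A)^w_{\leq 0}$ by passing to $\opp{\varcat A}$ via \eqref{equation:twistedcomplexes_opposites}. Let $Z^\bullet$ be a retract in $H^0(\Tw(\varcat A))$ of some $X^\bullet$ which we may take strictly concentrated in degrees $\geq 0$. Since $\Hom_{H^0}(Z^\bullet,-)$ is a retract of $\Hom_{H^0}(X^\bullet,-)$, the orthogonality just established (in its shifted form, i.e. no nonzero $H^0$-maps from $\Tw(\varcat A)^w_{\geq 0}$ to $\Tw(\varcat A)^w_{\leq -1}$) forces the brutal-truncation morphism $p_{-1}\colon Z^\bullet \to \sigma_{\leq -1}Z^\bullet$ to vanish in $H^0$; so by Proposition \ref{prop:pretriangle_cotstruct} the weight triangle splits, $\sigma_{\geq 0}Z^\bullet \cong Z^\bullet \oplus (\sigma_{\leq -1}Z^\bullet)[-1]$, and the complementary summand $Z'^\bullet := (\sigma_{\leq -1}Z^\bullet)[-1]$ is again a retract of $\sigma_{\geq 0}Z^\bullet \in \Tw(\varcat A)^w_{\geq 0}$ but is concentrated in degrees $\leq 0$. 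Running the same step on $Z'^\bullet$ gives $\sigma_{\geq 0}Z'^\bullet \cong Z'^\bullet \oplus (\sigma_{\leq -1}Z'^\bullet)[-1]$ with $\sigma_{\geq 0}Z'^\bullet$ a degree-$0$ complex, so comparing terms via Proposition \ref{proposition:iso_twisted_components} (recalling that direct sums are termwise) forces every term of $Z'^\bullet$ outside degree $0$ to be $\cong 0$ in $H^0(\varcat A)$. A twisted complex all of whose terms are cohomologically zero outside degree $0$ is isomorphic in $H^0(\Tw(\varcat A))$ to the degree-$0$ complex on its $0$-th term, since both its $\sigma_{\geq 1}$- and $\sigma_{\leq -1}$-parts vanish (built by iterated cones of zero objects, using Proposition \ref{prop:pretriangle_cotstruct} and Corollary \ref{corollary:brutaltrunc_limitcolimit}) and then Proposition \ref{prop:pretriangle_cotstruct} collapses it. Feeding $Z'^\bullet \cong (Z'^\bullet)^{0}$ back into $\sigma_{\geq 0}Z^\bullet \cong Z^\bullet \oplus (Z'^\bullet)^{0}$ and comparing degree-$(-1)$ terms (which are zero on the left) gives $(Z'^\bullet)^{0} = (\sigma_{\leq -1}Z^\bullet)^{-1} \cong 0$, hence $\sigma_{\leq -1}Z^\bullet \cong 0$ and $\sigma_{\geq 0}Z^\bullet \xrightarrow{\ \sim\ } Z^\bullet$, i.e. $Z^\bullet \in \Tw(\varcat A)^w_{\geq 0}$.

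For the co-heart, the dg-functor $\varcat A \hookrightarrow \Tw(\varcat A)$ placing an object in degree $0$ (Remark \ref{remark:twcomplex_coefficients_inclusion}) is fully faithful and factors through $\Tw(\varcat A)^w_{\geq 0} \cap \Tw(\varcat A)^w_{\leq 0}$; it remains to show it is essentially surjective on $H^0$. Given $W^\bullet$ in the co-heart, I would pass to an isomorphic representative with $W^i = 0$ for $i < 0$; it is also isomorphic in $H^0$ to some $V^\bullet$ with $V^i = 0$ for $i > 0$, and applying Proposition \ref{proposition:iso_twisted_components} to this isomorphism shows $W^i \cong 0$ in $H^0(\varcat A)$ for every $i \neq 0$. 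By the collapsing argument above, $W^\bullet \cong W^0$ in $H^0(\Tw(\varcat A))$, which is the image of an object of $\varcat A$, so the inclusion is a quasi-equivalence. I expect the retract-closure axiom to be the main obstacle: because the brutal truncations $\sigma_{\leq n}, \sigma_{\geq n}$ are only $Z^0$-functors and not dg-functors, one cannot naively apply a truncation to a homotopy-idempotent splitting, and it is the two-step descent through the complementary summand, combined with the componentwise-isomorphism criterion of Proposition \ref{proposition:iso_twisted_components}, that makes the argument work.
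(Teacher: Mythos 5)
Your proposal is correct, and for the routine axioms --- shift stability, orthogonality via the degree count $i\leq j$, the weight decomposition triangles supplied by Proposition \ref{prop:pretriangle_cotstruct}, and the identification of the co-heart through the componentwise criterion --- it matches the paper's proof; your co-heart argument in fact spells out details the paper leaves implicit. Where you genuinely diverge is the one non-routine axiom, closure under retracts. You run a two-step splitting argument: orthogonality forces $p_{-1}\colon Z^\bullet \to \sigma_{\leq -1}Z^\bullet$ to vanish in $H^0$, the weight triangle splits, and you then analyse the complementary summand $Z'^\bullet$ by a second splitting before collapsing it termwise. This works, but the paper's route is considerably shorter and sidesteps the difficulty you flag at the end (that $\sigma_{\leq n},\sigma_{\geq n}$ are only $Z^0$-functors, so one cannot truncate a homotopy idempotent): a retract in a triangulated category is a genuine direct summand, and the strict termwise direct sum computes direct sums in $H^0(\Tw(\varcat A))$, so one may write $X_1^\bullet\oplus X_2^\bullet\cong X^\bullet$ with $X^\bullet$ strictly concentrated in degrees $\geq 0$, read off from Proposition \ref{proposition:iso_twisted_components} that $X_k^i\cong 0$ in $H^0(\varcat A)$ for all $i<0$ and $k=1,2$, and conclude directly that the canonical morphism $\sigma_{\geq 0}X_k^\bullet\to X_k^\bullet$ is a componentwise, hence genuine, isomorphism in $H^0(\Tw(\varcat A))$. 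In short, one application of the componentwise criterion to the summands replaces your orthogonality-and-splitting detour. A minor omission: the paper also records closure of both classes under extensions, which is immediate from the termwise cone formula of Lemma \ref{lemma:mappingcone_twistedcomplexes}; depending on which formulation of the co-t-structure axioms you adopt this is either required or a formal consequence of the others, but it deserves a line.
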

\begin{proof}
We need to do the following:
\begin{enumerate}
    \item First, we prove that $\Tw(\varcat A)^w_{\leq n}$ is closed under positive shifts and that $\Tw(\varcat A)^w_{\geq n}$ is closed under negative shifts. Then, we check that both $\Tw(\varcat A)^w_{\leq n}$ and $\Tw(\varcat A)^w_{\geq n}$ are closed under extensions, direct sums and direct summands in $H^0(\Tw(\varcat A))$. \label{enumerate:twistedcomplexes_cotstruct_1}
    \item If $X^\bullet \in \Tw(\varcat A)^w_{\geq 0}$ and $Y^\bullet \in \Tw(\varcat A)^w_{\leq -1}$, we prove that
    \[
    H^0(\Tw(\varcat A))(X^\bullet, Y^\bullet) \cong 0.
    \] \label{enumerate:twistedcomplexes_cotstruct_2}
    \item If $X^\bullet \in \Tw(\varcat A)$, we prove the existence of a distinguished triangle
    \[
    \sigma_{\geq 0} X^\bullet \to X^\bullet \to \sigma_{\leq -1} X^\bullet 
    \]
    in $H^0(\Tw(\varcat A))$, where $\sigma_{\geq 0} X^\bullet \in \Tw(\varcat A)^w_{\geq 0}$ and $\sigma_{\leq -1} X^\bullet \in \Tw(\varcat A)^w_{\leq -1}$. \label{enumerate:twistedcomplexes_cotstruct_3}
\end{enumerate}
We may assume that $\varcat A$ is strictly concentrated in nonpositive degrees and has strict direct sums and zero objects, so that $\Tw(\varcat A)$ is strongly pretriangulated (Lemma \ref{lemma:dgcat_replace_strictadditive} and Proposition \ref{proposition:tw_stronglypretr}).

\emph{Step \ref{enumerate:twistedcomplexes_cotstruct_1}.} Direct sums in $\Tw(\varcat A)$ are defined termwise, and mapping cones are described in Proposition \ref{proposition:tw_stronglypretr}. Hence, the only slightly nontrivial claim to prove is closure of $\Tw(\varcat A)^w_{\leq n}$ and $\Tw(\varcat A)^w_{\geq n}$ under direct summands. We assume $n=0$ for simplicity, and we first deal with $\Tw(\varcat A)^w_{\leq 0}$. Let $X^\bullet \in \Tw(\varcat A)^w_{\leq 0}$ such that $X^\bullet \cong X_1^\bullet \oplus X_2^\bullet$ in $H^0(\Tw(\varcat A))$. Up to isomorphism in that homotopy category, we may assume that $X^\bullet$ is strictly concentrated in nonnegative degrees, and that $X_1^\bullet \oplus X_2^\bullet$ is a strict direct sum in $\Tw(\varcat A)$. We immediately see that $X_1^i \cong 0$ and $X_2^i \cong 0$ in $H^0(\varcat A)$ for all $i>0$. Then, consider the natural projections
\[
X^\bullet_k \to \sigma_{\leq 0} X^\bullet_k, \qquad k=1,2,
\]
described in \eqref{equation:twistedcomplex_projections}. Thanks to the characterization of isomorphisms of twisted complexes (Proposition \ref{proposition:iso_twisted_components}), we immediately see that those projections are isomorphisms in $H^0(\Tw(\varcat A))$. We conclude that $X^\bullet_1$ and $X^\bullet_2$ lie in $\Tw(\varcat A)^w_{\leq 0}$, as we claimed. The analogous result for $\Tw(\varcat A)^w_{\geq 0}$ is dealt with analogously, using the inclusions $\sigma_{\geq 0} X^\bullet_k \to X^\bullet_k$ described in \eqref{equation:twistedcomplex_inclusions} when needed.

\emph{Step \ref{enumerate:twistedcomplexes_cotstruct_2}.} Clearly, we may assume that $X^\bullet \in \sigma_{\geq 0} \Tw(\varcat A)$ and $Y^\bullet \in \sigma_{\leq -1} \Tw(\varcat A)$. Then, from the very definition of the morphisms in $\Tw(\varcat A)$, it is clear that there are no nonzero degree $0$ morphisms $X^\bullet \to Y^\bullet$. In particular, $H^0(\Tw(\varcat A))(X^\bullet, Y^\bullet) \cong 0$.

\emph{Step \ref{enumerate:twistedcomplexes_cotstruct_3}.} The existence of the distinguished triangle
\[
\sigma_{\geq 0} X^\bullet \to X^\bullet \to \sigma_{\leq -1} X^\bullet
\]
with the desired properties follows directly from the pretriangle \eqref{eq:pretriangle_cotstruct} (cf. Proposition \ref{prop:pretriangle_cotstruct}).

Finally, the intersection $\Tw(\varcat A)^w_{\geq 0} \cap \Tw(\varcat A)^w_{\leq 0}$ consists, up to isomorphism in $H^0(\Tw(\varcat A))$, of the twisted complexes concentrated in degree $0$. Recalling Remark \ref{remark:twcomplex_coefficients_inclusion}, this is quasi-equivalent to $\varcat A$.
\end{proof}
\subsection{t-structures on $\Tw^+(\varcat A)$ or $\Tw^-(\varcat A)$} \label{subsection:tstruct_bounded_twisted}
If the dg-category $\varcat A$ has suitable properties, we know from \cite{genovese-lowen-vdb-dginj} that twisted complexes which are bounded from above or below (cf. Definition \ref{definition:boundedtwistedcomplexes}) are endowed with t-structures.
\begin{definition}[{see also \cite[\S 5.1]{genovese-lowen-vdb-dginj}}] \label{definition:dgcat_derproj} 
Let $\varcat A$ be a dg-category. We say that $\varcat A$ is a \emph{dg-category of derived projectives} if:
\begin{itemize}
    \item $\varcat A$ is cohomologically concentrated in nonpositive degrees, and $H^0(\varcat A)$ is additive.
    \item $H^0(\varcat A)$ is right coherent, namely, the category $\modfp(H^0(\varcat A))$ of finitely presented right $H^0(\varcat A)$-modules is an abelian subcategory of $\Mod(H^0(\varcat A))$.
    \item For any $A \in \varcat A$ and for any $k \in \mathbb Z$, the right $H^0(\varcat A)$-module $H^k(\varcat A(-,A))$ is finitely presented (namely, it lies in $\modfp(H^0(\varcat A))$).
    \item $H^0(\varcat A)$ is idempotent complete.
\end{itemize}

Dually, we say that $\varcat A$ is a \emph{dg-category of derived injectives} if $\opp{\varcat A}$ is a dg-category of derived projectives.
\end{definition}
The above terminology comes from the fact that, if $\varcat A$ is a dg-category of derived projectives, then $\Tw^-(\varcat A)$ has a t-structure with enough derived projectives, and its derived projectives are quasi-equivalent to $\varcat A$ (cf. \cite[Theorem 7.1]{genovese-lowen-vdb-dginj}. Dually, if $\varcat A$ is a dg-category of derived injectives, then $\Tw^+(\varcat A)$ has a t-structure with enough derived injectives, and its derived injectives are quasi-equivalent to $\varcat A$. The reader can find more informations on the definition and properties of derived projectives or injectives in Appendix \ref{appendix:derproj_cotstruct}; our main results will not directly involve such notions.

The facts that we mentioned in the above discussion can be made a bit more precise:
\begin{proposition} \label{proposition:tstructures_boundedtw}
Let $\varcat A$ be a dg-category of derived projectives. Then, the pretriangulated dg-category $\Tw^-(\varcat A)$ has a non-degenerate t-structure $(\Tw^-(\varcat A)_{\leq 0}, \Tw^-(\varcat A)_{\geq 0})$ such that
\[
\Tw^-(\varcat A)_{\leq 0} = \Tw(\varcat A)^w_{\leq 0},
\]
where $\Tw(\varcat A)^w_{\leq 0}$ is the co-aisle of the canonical co-t-structure on $\Tw(\varcat A)$ discussed in Proposition \ref{proposition:twistedcomplexes_cotstruct}. The heart of such t-structure is equivalent to $\modfp(H^0(\varcat A))$.

Dually, let $\varcat A$ be a dg-category of derived injectives. Then, the pretriangulated dg-category $\Tw^+(\varcat A)$ has a non-degenerate t-structure $(\Tw^+(\varcat A)_{\leq 0}, \Tw^+(\varcat A)_{\geq 0})$ such that
\[
\Tw^+(\varcat A)_{\geq 0} = \Tw(\varcat A)^w_{\geq 0},
\]
where $\Tw(\varcat A)^w_{\geq 0}$ is the co-aisle of the canonical co-t-structure on $\Tw(\varcat A)$ discussed in Proposition \ref{proposition:twistedcomplexes_cotstruct}. The heart of such t-structure is equivalent to $\opp{\modfp(H^0(\opp{\varcat A}))}$.
\end{proposition}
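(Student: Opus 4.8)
The plan is as follows. First I would reduce to the case in which $\varcat A$ is \emph{strictly} concentrated in nonpositive degrees and has strict finite direct sums and strict zero objects: by Lemma~\ref{lemma:dgcat_replace_strictadditive} we may replace $\varcat A$ by such a dg-category up to a zig-zag of quasi-equivalences, and since $\Tw(-)$ (and its bounded variant $\Tw^-(-)$) preserves quasi-equivalences (Proposition~\ref{prop:Tw_preserve_quasiequivalences} and the remark following it) while all the subcategories in the statement are defined up to isomorphism in $H^0$, this is harmless; in this situation $\Tw^-(\varcat A)$ is strongly pretriangulated (Proposition~\ref{proposition:tw_stronglypretr}). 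I would then produce the t-structure on $\Tw^-(\varcat A)$ via Keller--Vossieck (Proposition~\ref{proposition:tstruct_from_aisles}), taking as candidate left aisle $\Tw(\varcat A)^w_{\leq 0}$, which, being the isomorphism-closure in $H^0$ of the twisted complexes concentrated in degrees $\leq 0$, is automatically contained in $\Tw^-(\varcat A)$. That $H^0(\Tw(\varcat A)^w_{\leq 0})$ is a strictly full additive subcategory of $H^0(\Tw^-(\varcat A))$ stable under positive shifts, direct summands and extensions was essentially already shown in the proof of Proposition~\ref{proposition:twistedcomplexes_cotstruct}; for extensions one uses the explicit cone formula of Lemma~\ref{lemma:mappingcone_twistedcomplexes}, namely $\cone(f)^i=X^{i+1}\oplus Y^i$, which vanishes in degrees $>0$ whenever $X^\bullet$ and $Y^\bullet$ do, so that this subcategory is in fact closed under cones.

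The only substantial point — and the step I expect to be the main obstacle — is the third Keller--Vossieck condition: the inclusion $H^0(\Tw(\varcat A)^w_{\leq 0})\hookrightarrow H^0(\Tw^-(\varcat A))$ must admit a right adjoint $\tau_{\leq 0}$. This is precisely where the defining properties of a dg-category of derived projectives (Definition~\ref{definition:dgcat_derproj}) are needed: right coherence of $H^0(\varcat A)$ together with finite presentation of every cohomology module $H^k(\varcat A(-,A))$ guarantee that a bounded above twisted complex admits a resolution by objects of $\varcat A$, concentrated in degrees $\leq 0$, through which one builds the counit of the adjunction. This construction, the identification of the heart $H^0(\Tw^-(\varcat A))^\heartsuit$ with $\modfp(H^0(\varcat A))$ (via $X^\bullet\mapsto H^0_t(X^\bullet)$, the quasi-inverse sending a finitely presented module to such a resolution), and the non-degeneracy of the resulting t-structure are carried out in \cite{genovese-lowen-vdb-dginj}, and I would invoke that reference for these technical inputs rather than reprove them. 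By Proposition~\ref{proposition:tstruct_from_aisles} we then obtain a t-structure whose left aisle is, by construction, $\Tw(\varcat A)^w_{\leq 0}$, and since a t-structure is determined by its aisle this is the t-structure of \cite{genovese-lowen-vdb-dginj}. One half of non-degeneracy is in any case immediate from Proposition~\ref{proposition:iso_twisted_components}: if $X^\bullet$ lies in $\Tw(\varcat A)^w_{\leq n}$ for every $n$ then all of its components are isomorphic to $0$ in $H^0(\varcat A)$, hence $X^\bullet\cong 0$; the other half is similarly handled in the cited reference.

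Finally, the statement for $\Tw^+(\varcat A)$ follows by duality. If $\varcat A$ is a dg-category of derived injectives then $\opp{\varcat A}$ is a dg-category of derived projectives (Definition~\ref{definition:dgcat_derproj}), so the case just treated applies to $\opp{\varcat A}$ and yields a non-degenerate t-structure on $\Tw^-(\opp{\varcat A})$ with aisle $\Tw(\opp{\varcat A})^w_{\leq 0}$ and heart $\modfp(H^0(\opp{\varcat A}))$. Passing to opposite dg-categories and using the identifications $\opp{\Tw^-(\opp{\varcat A})}\cong\Tw^+(\varcat A)$ and $\opp{\Tw(\opp{\varcat A})}\cong\Tw(\varcat A)$ from \eqref{equation:twistedcomplexes_opposites} — under which a left coaisle corresponds to a right coaisle and the aisle of a t-structure corresponds to the coaisle of the opposite t-structure — one obtains a non-degenerate t-structure on $\Tw^+(\varcat A)$ with $\Tw^+(\varcat A)_{\geq 0}=\Tw(\varcat A)^w_{\geq 0}$ and heart $\opp{\modfp(H^0(\opp{\varcat A}))}$, as claimed.
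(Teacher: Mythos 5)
Your proposal is correct and in substance coincides with the paper's own (very short) proof: both ultimately reduce everything nontrivial — the existence of the right adjoint $\tau_{\leq 0}$, the identification of the heart with $\modfp(H^0(\varcat A))$, and non-degeneracy — to the cited results of \cite{genovese-lowen-vdb-dginj}, with the identification of the aisle $\Tw^-(\varcat A)_{\leq 0}=\Tw(\varcat A)^w_{\leq 0}$ and the dual statement for $\Tw^+$ handled exactly as you do. The extra scaffolding you supply (reduction to the strict case, Keller--Vossieck, closure of the coaisle under cones) is consistent with the paper's setup but not an essentially different argument.
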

\begin{proof}
We need to check only the first assertion, the other one being dual. The existence of a t-structure on $\Tw^-(\varcat A)$ follows from \cite[Theorem 5.9]{genovese-lowen-vdb-dginj}. The equality $\Tw^-(\varcat A)_{\leq 0} = \Tw(\varcat A)^w_{\leq 0}$ is actually proven in \cite[Lemma 5.12]{genovese-lowen-vdb-dginj} and \cite[Proposition 5.17]{genovese-lowen-vdb-dginj}.
\end{proof}

\subsection{Extending t-structures to unbounded twisted complexes} 
Our main result deals with extending the t-structure on $\Tw^-(\varcat A)$ (or dually $\Tw^+(\varcat A)$) described in Proposition \ref{proposition:tstructures_boundedtw} to a t-structure on the dg-category of unbounded twisted complexes $\Tw(\varcat A)$. The canonical co-t-structure on $\Tw(\varcat A)$ will be left adjacent (or dually right adjacent) to this ``extended'' t-structure on $\Tw(\varcat A)$.
\begin{remark} \label{remark:Twbounded_subcat_strictlyfull_abuse}
When viewing $\Tw^-(\varcat A)$ and $\Tw^+(\varcat A)$ as full dg-subcategories of $\Tw(\varcat A)$, we will abuse notation and identify them with their closures under isomorphisms in $H^0(\Tw(\varcat A))$.
\end{remark}

\begin{theorem} \label{theorem:tstruct_unbounded}
Let $\varcat A$ be a dg-category of derived projectives, and assume that $\tau_{\leq 0} \Tw(\varcat A)^w_{\leq 0}$ is \emph{closed under sequential homotopy limits as a dg-category concentrated in nonpositive degrees}, in the sense that, for any sequence $(X^\bullet_{n+1} \to X^\bullet_n)_{n \geq 0}$ of closed degree $0$ morphisms in $\tau_{\leq 0} \Tw(\varcat A)^w_{\leq 0}$, there is an object $\holim_n X^\bullet_n$ in $\tau_{\leq 0}\Tw(\varcat A)^w_{\leq 0}$ together with an isomorphism
\begin{equation} \label{equation:holim_nonpositive}
\tau_{\leq 0} \Tw(\varcat A)^w_{\leq 0} (-, \holim_n X^\bullet_n) \xrightarrow{\sim} \tau_{\leq 0} \holim_n \tau_{\leq 0} \Tw(\varcat A)^w_{\leq 0} (-, X^\bullet_n)
\end{equation}
in $\dercomp(\tau_{\leq 0} \Tw(\varcat A)^w_{\leq 0})$.  Then, there is a unique t-structure $(\Tw(\varcat A)^{\operatorname{proj}}_{\leq 0}, \Tw(\varcat A)^{\operatorname{proj}}_{\geq 0})$ on $\Tw(\varcat A)$, called the \emph{projective t-stucture}, such that
\[ 
\Tw(\varcat A)^{\operatorname{proj}}_{\leq 0} = \Tw^-(\varcat A)_{\leq 0} = \Tw(\varcat A)^w_{\leq 0}
\]
and the inclusion $\Tw^-(\varcat A) \hookrightarrow \Tw(\varcat A)$ is t-exact. The heart of such t-structure is equivalent to $\modfp(H^0(\varcat A))$.

Dually, let $\varcat A$ be a dg-category of derived injectives, and assume that $\tau_{\leq 0}\Tw(\varcat A)^w_{\geq 0}$ is \emph{closed under sequential homotopy colimits as a dg-category concentrated in nonpositive degrees}, in the sense that, for any sequence $(X^\bullet_n \to X^\bullet_{n+1})_{n \geq 0}$ of closed degree $0$ morphisms in $\tau_{\leq 0} \Tw(\varcat A)^w_{\geq 0}$, there is an object $\hocolim_n X^\bullet_n$ in $\tau_{\leq 0}\Tw(\varcat A)^w_{\geq 0}$ together with an isomorphism
\begin{equation} \label{equation:hocolim_nonpositive}
\tau_{\leq 0} \Tw(\varcat A)^w_{\geq 0} (\hocolim_n X^\bullet_n,-) \xrightarrow{\sim} \tau_{\leq 0} \holim_n \tau_{\leq 0} \Tw(\varcat A)^w_{\geq 0} (X^\bullet_n,-)
\end{equation}
in $\dercomp(\tau_{\leq 0} \opp{(\Tw(\varcat A)^w_{\geq 0})})$. Then, there is a unique t-structure $(\Tw(\varcat A)^{\operatorname{inj}}_{\leq 0}, \Tw(\varcat A)^{\operatorname{inj}}_{\geq 0})$ on $\Tw(\varcat A)$, called the \emph{injective t-stucture}, such that
\[
\Tw(\varcat A)^{\operatorname{inj}}_{\geq 0} = \Tw^+(\varcat A)_{\geq 0} = \Tw(\varcat A)^w_{\geq 0}
\]
and the inclusion $\Tw^+(\varcat A) \hookrightarrow \Tw(\varcat A)$ is t-exact. The heart of such t-structure is equivalent to $\opp{\modfp(H^0(\opp{\varcat A}))}$.
\end{theorem}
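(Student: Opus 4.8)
The plan is to apply Proposition~\ref{proposition:tstruct_from_aisles} with $\Tw(\varcat A)^{\operatorname{proj}}_{\leq 0} := \Tw(\varcat A)^w_{\leq 0}$. First, using Lemma~\ref{lemma:dgcat_replace_strictadditive} one may assume $\varcat A$ is strictly concentrated in nonpositive degrees with strict direct sums and zero objects, so that $\Tw(\varcat A)$ is strongly pretriangulated; and by Remark~\ref{remark:Twbounded_subcat_strictlyfull_abuse} one regards $\Tw^-(\varcat A)$ and $\Tw(\varcat A)^w_{\leq 0}$ as strictly full subcategories of $\Tw(\varcat A)$, noting that $\Tw(\varcat A)^w_{\leq 0}\subseteq\Tw^-(\varcat A)$ and, by Proposition~\ref{proposition:tstructures_boundedtw}, that $\Tw^-(\varcat A)_{\leq 0}=\Tw(\varcat A)^w_{\leq 0}$. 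The conditions that $H^0(\Tw(\varcat A)^w_{\leq 0})$ be strictly full, additive, extension-closed and stable under positive shifts in $H^0(\Tw(\varcat A))$ are exactly the assertions about the left coaisle of the canonical co-t-structure established in the proof of Proposition~\ref{proposition:twistedcomplexes_cotstruct}. Hence everything hinges on constructing a right adjoint $\tau_{\leq 0}$ to the inclusion $H^0(\Tw(\varcat A)^w_{\leq 0})\hookrightarrow H^0(\Tw(\varcat A))$; once this is available, Proposition~\ref{proposition:tstruct_from_aisles} delivers the t-structure, and uniqueness is automatic since a t-structure is determined by its left aisle.

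For the truncation, fix $X^\bullet\in\Tw(\varcat A)$. Each brutal truncation $\sigma_{\leq k}X^\bullet$ ($k\geq 0$) lies in $\Tw^-(\varcat A)$, so one may apply the truncation functor $\tau^-_{\leq 0}$ of the t-structure of Proposition~\ref{proposition:tstructures_boundedtw} to obtain $T_k:=\tau^-_{\leq 0}\sigma_{\leq k}X^\bullet\in\Tw^-(\varcat A)_{\leq 0}=\Tw(\varcat A)^w_{\leq 0}$ together with counits $T_k\to\sigma_{\leq k}X^\bullet$. The projections $\sigma_{\leq k+1}X^\bullet\to\sigma_{\leq k}X^\bullet$ turn $(T_k)_k$ into a tower; after a routine strictification (a priori the transition maps only compose up to homotopy) one may take it to be a sequence of closed degree $0$ morphisms in $\tau_{\leq 0}\Tw(\varcat A)^w_{\leq 0}$, and then set
\[
\tau_{\leq 0}X^\bullet := \holim_k T_k,
\]
the object of $\tau_{\leq 0}\Tw(\varcat A)^w_{\leq 0}$ supplied by the standing hypothesis, viewed inside $\Tw(\varcat A)^w_{\leq 0}\subseteq\Tw(\varcat A)$. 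This is functorial in $X^\bullet$ on $H^0$, and, since Corollary~\ref{corollary:brutaltrunc_limitcolimit} identifies $X^\bullet$ with $\holim_k\sigma_{\leq k}X^\bullet$, the counits assemble into a natural morphism $\tau_{\leq 0}X^\bullet\to X^\bullet$.

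To verify the adjunction I would take $Y^\bullet\in\Tw(\varcat A)^w_{\leq 0}$ and chain together, naturally in $Y^\bullet$: the defining isomorphism \eqref{equation:holim_nonpositive} of $\holim_k T_k$ (using fullness of $\Tw(\varcat A)^w_{\leq 0}$); the quasi-isomorphism $\tau_{\leq 0}\Tw(\varcat A)(Y^\bullet,T_k)\xrightarrow{\approx}\tau_{\leq 0}\Tw(\varcat A)(Y^\bullet,\sigma_{\leq k}X^\bullet)$ induced by the counit, which holds by Lemma~\ref{lemma:truncations_iso_complexes} applied inside $\Tw^-(\varcat A)$ with $Y^\bullet\in\Tw^-(\varcat A)_{\leq 0}$ and $\sigma_{\leq k}X^\bullet\in\Tw^-(\varcat A)$, and which is then passed through $\holim_k$ and $\tau_{\leq 0}$; the comparison $\tau_{\leq 0}\holim_k\tau_{\leq 0}C_k\xrightarrow{\approx}\tau_{\leq 0}\holim_k C_k$, valid for any tower of complexes, which follows from the $\varprojlim^1$-exact sequences computing $H^{\leq 0}$ of a homotopy limit together with Lemma~\ref{lemma:truncation_exact}; and finally Corollary~\ref{corollary:brutaltrunc_limitcolimit}, which gives $\tau_{\leq 0}\Tw(\varcat A)(Y^\bullet,X^\bullet)\xrightarrow{\approx}\tau_{\leq 0}\holim_k\Tw(\varcat A)(Y^\bullet,\sigma_{\leq k}X^\bullet)$. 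Composing and taking $H^0$ shows $\tau_{\leq 0}X^\bullet\to X^\bullet$ induces $H^0(\Tw(\varcat A))(Y^\bullet,\tau_{\leq 0}X^\bullet)\xrightarrow{\sim}H^0(\Tw(\varcat A))(Y^\bullet,X^\bullet)$, so Proposition~\ref{proposition:tstruct_from_aisles} applies. Setting $\Tw(\varcat A)^{\operatorname{proj}}_{\geq 0}:=(\Tw(\varcat A)^w_{\leq 0})^{\perp}[1]$ (orthogonal in $H^0(\Tw(\varcat A))$), and using that $\Tw(\varcat A)^w_{\leq 0}\subseteq\Tw^-(\varcat A)$ with $\Tw^-(\varcat A)$ full, membership in $\Tw(\varcat A)^{\operatorname{proj}}_{\geq 0}$ of an object of $\Tw^-(\varcat A)$ is detected inside $\Tw^-(\varcat A)$; hence $\Tw^-(\varcat A)_{\geq 0}=\Tw(\varcat A)^{\operatorname{proj}}_{\geq 0}\cap\Tw^-(\varcat A)$ while $\Tw^-(\varcat A)_{\leq 0}=\Tw(\varcat A)^{\operatorname{proj}}_{\leq 0}$, so the exact inclusion $\Tw^-(\varcat A)\hookrightarrow\Tw(\varcat A)$ is t-exact. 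The heart $\Tw(\varcat A)^{\operatorname{proj}}_{\leq 0}\cap\Tw(\varcat A)^{\operatorname{proj}}_{\geq 0}$ lies in $\Tw^-(\varcat A)$ and therefore equals the heart of the t-structure of Proposition~\ref{proposition:tstructures_boundedtw}, namely $\modfp(H^0(\varcat A))$. The dual statement follows by applying the above to $\opp{\varcat A}$ — which is a dg-category of derived projectives when $\varcat A$ is one of derived injectives — and transporting along $\opp{\Tw(\opp{\varcat A})}\cong\Tw(\varcat A)$ and $\opp{\Tw^+(\opp{\varcat A})}\cong\Tw^-(\varcat A)$ from \eqref{equation:twistedcomplexes_opposites}, noting that hypothesis \eqref{equation:hocolim_nonpositive} is precisely \eqref{equation:holim_nonpositive} read in $\opp{\varcat A}$.

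The main obstacle will be the construction of $\tau_{\leq 0}$: one must splice the bounded truncations $\tau^-_{\leq 0}\sigma_{\leq k}X^\bullet$ by a homotopy limit that remains inside the aisle $\Tw(\varcat A)^w_{\leq 0}$ — such a limit need not exist in $\Tw(\varcat A)$, which is exactly why the closure hypothesis \eqref{equation:holim_nonpositive} is imposed — and then confirm that this \emph{internal} homotopy limit represents $\Hom_{\Tw(\varcat A)}(-,X^\bullet)$ restricted to the aisle. The verification goes through only because $\tau_{\leq 0}$ is systematically applied to the relevant mapping complexes, so that the $\varprojlim^1$-contributions, which live in positive cohomological degrees, cause no interference; Lemma~\ref{lemma:truncations_iso_complexes} is the key device ensuring that replacing $\sigma_{\leq k}X^\bullet$ by its bounded truncation $T_k$ does not alter the truncated mapping complexes out of objects of the aisle.
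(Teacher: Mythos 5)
Your proposal is correct and follows essentially the same route as the paper's proof: reduce via Proposition~\ref{proposition:tstruct_from_aisles} to constructing a right adjoint to the inclusion of the aisle, define $\tau_{\leq 0}X^\bullet$ as the homotopy limit (supplied by hypothesis \eqref{equation:holim_nonpositive}) of the bounded truncations $\tau_{\leq 0}\sigma_{\leq k}X^\bullet$, and verify the adjunction by chaining exactly the isomorphisms coming from \eqref{equation:holim_nonpositive}, Lemma~\ref{lemma:truncations_iso_complexes}, the commutation of $\tau_{\leq 0}$ with sequential homotopy limits (the paper's Lemma~\ref{lemma:technical_trunc_holim}, which you re-derive via the same five-lemma/$\varprojlim^1$ argument), and Corollary~\ref{corollary:brutaltrunc_limitcolimit}. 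The remaining points (uniqueness from the left aisle, t-exactness of the inclusion, identification of the heart, and dualization via \eqref{equation:twistedcomplexes_opposites}) are handled as in the paper.
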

\begin{proof}
We show the first claim, the other one being dual. Using Proposition \ref{proposition:tstruct_from_aisles}, we will obtain the desired t-structure once we show that the inclusion 
\[
H^0(\Tw^-(\varcat A)_{\leq 0}) \hookrightarrow H^0(\Tw(\varcat A))
\]
has a right adjoint $\tau_{\leq 0}$. More practically, for a given $X^\bullet \in \Tw(\varcat A)$, we want to find an object $\tau_{\leq 0} X^\bullet \in \Tw^-(\varcat A)_{\leq 0}$ and an isomorphism
\[
\tau_{\leq 0}\Tw(\varcat A)(-,\tau_{\leq 0} X^\bullet) \cong \tau_{\leq 0} \Tw(\varcat A)(-,X^\bullet)
\]
in the derived category $\dercomp(\tau_{\leq 0} \Tw^-(\varcat A)_{\leq 0})$.

The idea is to approximate $X^\bullet$ with a sequence of twisted complexes in $\Tw^-(\varcat A)$, then use the left truncation in $\Tw^-(\varcat A)$. More precisely, we may apply Corollary \ref{corollary:brutaltrunc_limitcolimit} and write
\[
X^\bullet \cong \holim_k (\sigma_{\leq k} X)^\bullet.
\]
Then, $(\sigma_{\leq k} X)^\bullet \in \Tw^-(\varcat A)$ for all $k \geq 0$ and we may apply the left truncation $\tau_{\leq 0}$ of the t-structure on $\Tw^-(\varcat A)$ described in Proposition \ref{proposition:tstructures_boundedtw}. Using the existence of homotopy colimits in $\tau_{\leq 0} \Tw^-(\varcat A)_{\leq 0}$, we may define:
\[
\tau_{\leq 0} X^\bullet = \holim_k \tau_{\leq 0} (\sigma_{\leq k} X)^\bullet.
\]

Then, we have isomorphisms in $\dercomp(\tau_{\leq 0} \Tw^-(\varcat A)_{\leq 0})$:
\begin{align*}
    \tau_{\leq 0} \Tw(\varcat A)(-, \tau_{\leq 0} X^\bullet) & \cong \tau_{\leq 0} \holim_k \tau_{\leq 0} \Tw^-(\varcat A)(-,\tau_{\leq 0} \sigma_{\leq k} X^\bullet) \qquad \text{(cf. \eqref{equation:holim_nonpositive})} \\
    & \cong \tau_{\leq 0} \holim_k \tau_{\leq 0} \Tw^-(\varcat A)(-, \sigma_{\leq k} X^\bullet)  \qquad \text{(cf. Lemma \ref{lemma:truncations_iso_complexes})} \\
    & \cong \tau_{\leq 0} \holim_k \Tw^-(\varcat A)(-, \sigma_{\leq k} X^\bullet)  \qquad \text{(cf. Lemma \ref{lemma:technical_trunc_holim} below)} \\
    & \cong \tau_{\leq 0} \Tw(\varcat A)(-, X^\bullet). \qquad \text{(cf. Corollary \ref{corollary:brutaltrunc_limitcolimit})}
\end{align*}
The second isomorphism above actually involves a comparison between homotopy limits. To explain this in detail, we first notice that we have commutative diagrams in $H^0(\tau_{\leq 0} \Tw^-(\varcat A))$ for $k \geq 0$:
\[\begin{tikzcd}[ampersand replacement=\&]
	{\tau_{\leq 0} \sigma_{\leq k+1}X^\bullet} \& {\sigma_{\leq k+1}X^\bullet} \\
	{\tau_{\leq 0} \sigma_{\leq k}X^\bullet} \& {\sigma_{\leq k}X^\bullet,}
	\arrow[from=1-1, to=1-2]
	\arrow[from=1-1, to=2-1]
	\arrow[from=2-1, to=2-2]
	\arrow[from=1-2, to=2-2]
\end{tikzcd}\]
which, thanks to the (suitably restricted) derived Yoneda embedding, induce commutative diagrams in $\dercomp(\tau_{\leq 0} \Tw^-(\varcat A)_{\leq 0})$:
\[\begin{tikzcd}[ampersand replacement=\&]
	{\tau_{\leq 0}\Tw^-(\varcat A)(-,\tau_{\leq 0} \sigma_{\leq k+1}X^\bullet}) \& {\tau_{\leq 0}\Tw^-(\varcat A)(-,\sigma_{\leq k+1}X^\bullet)} \\
	{\tau_{\leq 0}\Tw^-(\varcat A)(-,\tau_{\leq 0} \sigma_{\leq k}X^\bullet)} \& {\tau_{\leq 0}\Tw^-(\varcat A)(-,\sigma_{\leq k}X^\bullet),}
	\arrow["\sim", from=1-1, to=1-2]
	\arrow[from=1-1, to=2-1]
	\arrow["\sim", from=2-1, to=2-2]
	\arrow[from=1-2, to=2-2]
\end{tikzcd}\]
where the horizontal arrows are isomorphisms. Hence, we obtain a (non unique) isomorphism in $\dercomp(\tau_{\leq 0} \Tw^-(\varcat A)_{\leq 0})$ between the homotopy limits:
\[
\holim_k \tau_{\leq 0} \Tw^-(\varcat A)(-,\tau_{\leq 0} \sigma_{\leq k} X^\bullet) \xrightarrow{\sim} \holim_k \tau_{\leq 0} \Tw^-(\varcat A)(-,\sigma_{\leq k} X^\bullet).
\]

This projective t-structure on $\Tw(\varcat A)$ is uniquely determined by the left aisle $\Tw(\varcat A)_{\leq 0}^{\operatorname{proj}}$: the right aisle $\Tw(\varcat A)^{\operatorname{proj}}_{\geq 0}$ is obtained as the suitable orthogonal.

Next, we show that the inclusion $\Tw^-(\varcat A) \hookrightarrow \Tw(\varcat A)$ is t-exact.. Since $\Tw^-(\varcat A)_{\leq 0} = \Tw(\varcat A)^{\operatorname{proj}}_{\leq 0}$ by construction, we only need to show the inclusion $\Tw^-(\varcat A)_{\geq 0} \subseteq \Tw(\varcat A)^{\operatorname{proj}}_{\geq 0}$. This is immediate: if $X^\bullet \in \Tw^-(\varcat A)_{\geq 0}$, we have
\[
H^0(\Tw(\varcat A))(Y^\bullet, X^\bullet) = 0
\]
for any $Y \in \Tw^-(\varcat A)_{\leq -1} = \Tw(\varcat A)^{\operatorname{proj}}_{\leq -1}$. In turn, this implies that $X^\bullet \in \Tw(\varcat A)^{\operatorname{proj}}_{\geq 0}$.

To conclude, we show that $\Tw^-(\varcat A) \hookrightarrow \Tw(\varcat A)$ restricts to the identity on the hearts. Indeed, we have an equality
\[
\Tw^-(\varcat A)_{\leq 0} \cap \Tw^-(\varcat A)_{\geq 0} = \Tw(\varcat A)^{\operatorname{proj}}_{\leq 0} \cap \Tw(\varcat A)^{\operatorname{proj}}_{\geq 0}
\]
of full dg-subcategories of $\Tw(\varcat A)$. The inclusion $\subseteq$ is clear. On the other hand, if $X^\bullet \in \Tw(\varcat A)^{\operatorname{proj}}_{\leq 0} \cap \Tw(\varcat A)^{\operatorname{proj}}_{\geq 0}$, we have that $X^\bullet \in \Tw^-(\varcat A)_{\leq 0}$ and that for any $Y^\bullet \in \Tw(\varcat A)^{\operatorname{proj}}_{\leq -1} = \Tw^-(\varcat A)_{\leq -1}$:
\[
H^0(\Tw(\varcat A))(Y^\bullet, X^\bullet) = 0.
\]
In particular, $X^\bullet \in \Tw^-(\varcat A)_{\geq 0}$.
\end{proof}
Here is the technical lemma we used in the above proof:
\begin{lemma} \label{lemma:technical_trunc_holim}
Let $\cat A = \tau_{\leq 0} \cat A$ be a dg-category strictly concentrated in nonpositive degrees, and let $(M_{n+1} \to M_n)_{n \geq 0}$ be a sequence of closed degree $0$ morphisms of of right $\cat A$-dg-modules. Taking smart truncations, we have an induced sequence $(\tau_{\leq 0} M_{n+1} \to \tau_{\leq 0} M_n)_{n \geq 0}$ of closed degree $0$ morphisms of right $\cat A$-dg-modules. The natural morphisms $\tau_{\leq 0} M_n \to M_n$ induce an isomorphism
\[
\tau_{\leq 0} \holim_n \tau_{\leq 0} M_n \to \tau_{\leq 0} \holim_n M_n
\]
in $\dercomp(\cat A)$.
\end{lemma}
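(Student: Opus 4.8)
The plan is to compute both homotopy limits by means of the standard Milnor ($\varprojlim$--$\varprojlim^{1}$) exact sequence and to compare them degree by degree.

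First I would record that the truncation is well defined on dg-modules: because $\cat A = \tau_{\leq 0}\cat A$, every degree $0$ morphism of $\cat A$ is a cocycle, so for any right $\cat A$-dg-module $M$ the levelwise truncation given by $(\tau_{\leq 0}M)(A) = \tau_{\leq 0}(M(A))$ is a right $\cat A$-dg-submodule of $M$, and the levelwise inclusions assemble into a natural closed degree $0$ morphism $\tau_{\leq 0}M \to M$. Functoriality of $\tau_{\leq 0}$ on chain maps makes the morphisms $\tau_{\leq 0}M_n \to M_n$ commute with the transition maps, hence they induce a morphism between the rotated pretriangles of \S\ref{subsection:hocolim} defining $\holim_n \tau_{\leq 0}M_n$ and $\holim_n M_n$ in $\compdg(\cat A)$ (both exist since $\compdg(\cat A)$ is strongly pretriangulated with products), and therefore a morphism $\holim_n \tau_{\leq 0}M_n \to \holim_n M_n$; applying $\tau_{\leq 0}$ gives the morphism of the statement. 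Since $H^p(\tau_{\leq 0}N) = H^p(N)$ for $p \leq 0$ and vanishes for $p > 0$ (recalled in \S\ref{subsection:truncations}), the lemma reduces to showing that the induced map $H^p(\holim_n \tau_{\leq 0}M_n) \to H^p(\holim_n M_n)$ is an isomorphism for every $p \leq 0$.

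To see this I would use that the cohomology long exact sequence of the triangle $\holim_n N_n \to \prod_n N_n \xrightarrow{1-\nu} \prod_n N_n$, together with $H^{*}(\prod_n N_n) = \prod_n H^{*}(N_n)$, produces for any sequence $(N_{n+1} \to N_n)_{n \geq 0}$ of right $\cat A$-dg-modules a short exact sequence
\[
0 \to \varprojlim_n^{1} H^{p-1}(N_n) \to H^p(\holim_n N_n) \to \varprojlim_n H^p(N_n) \to 0,
\]
natural in the sequence. Instantiating it for $N_n = \tau_{\leq 0}M_n$ and for $N_n = M_n$ and using the morphism $\tau_{\leq 0}M_n \to M_n$ yields a morphism of short exact sequences. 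For $p \leq 0$ both $p$ and $p-1$ are $\leq 0$, so the maps $H^p(\tau_{\leq 0}M_n) \to H^p(M_n)$ and $H^{p-1}(\tau_{\leq 0}M_n) \to H^{p-1}(M_n)$ are isomorphisms of inverse systems; hence the two outer vertical maps in the morphism of short exact sequences are isomorphisms, and the five lemma forces the middle one $H^p(\holim_n \tau_{\leq 0}M_n) \to H^p(\holim_n M_n)$ to be an isomorphism. Being a quasi-isomorphism of right $\cat A$-dg-modules, the morphism $\tau_{\leq 0}\holim_n \tau_{\leq 0}M_n \to \tau_{\leq 0}\holim_n M_n$ is then an isomorphism in $\dercomp(\cat A)$.

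The only point requiring genuine care is the well-definedness of $\tau_{\leq 0}$ on dg-modules, which is exactly where the hypothesis $\cat A = \tau_{\leq 0}\cat A$ is used, together with the verification that the homotopy limit in $\compdg(\cat A)$ really sits in the triangle above (so that the Milnor sequence is available); the remainder is formal diagram chasing. Alternatively, one can avoid the five lemma entirely: the fibre sequence $\tau_{\leq 0}M_n \to M_n \to \tau_{\geq 1}M_n$ yields a triangle $\holim_n \tau_{\leq 0}M_n \to \holim_n M_n \to \holim_n \tau_{\geq 1}M_n$ in $\compdg(\cat A)$, and the Milnor sequence shows $H^p(\holim_n \tau_{\geq 1}M_n) = 0$ for all $p \leq 0$ (both the $\varprojlim$ and the $\varprojlim^{1}$ contributions vanish, since $H^q(\tau_{\geq 1}M_n) = 0$ for $q \leq 0$), whence $\tau_{\leq 0}\holim_n \tau_{\geq 1}M_n \simeq 0$ and the first arrow of that triangle becomes a quasi-isomorphism after applying $\tau_{\leq 0}$.
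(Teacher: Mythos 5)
Your proof is correct and follows essentially the same route as the paper: both compare the two Milnor triangles and apply the five lemma in cohomology, using that $H^p(\prod_n N_n)=\prod_n H^p(N_n)$ and that $\tau_{\leq 0}M_n\to M_n$ is an isomorphism on $H^p$ for $p\leq 0$. Your repackaging through the $\varprojlim$--$\varprojlim^1$ short exact sequence (and the alternative via $\holim_n\tau_{\geq 1}M_n$) is just a mild variant of the paper's direct five-lemma argument on the long exact sequence.
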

\begin{proof}
We have a morphism $\holim_n \tau_{\leq 0} M_n \to \holim_n M_n$ in $\dercomp(\cat A)$ which fits in the following morphism of distinguished triangles:
\[\begin{tikzcd}[ampersand replacement=\&]
	{\holim_n \tau_{\leq 0} M_n} \& {\prod_{n \geq 0} \tau_{\leq 0} M_n} \& {\prod_{n \geq 0} \tau_{\leq 0} M_n} \\
	{\holim_n M_n} \& {\prod_{n \geq 0} M_n} \& {\prod_{n \geq 0} M_n.}
	\arrow[from=1-1, to=1-2]
	\arrow["{1-\nu}", from=1-2, to=1-3]
	\arrow[from=1-3, to=2-3]
	\arrow[from=1-2, to=2-2]
	\arrow[dotted, from=1-1, to=2-1]
	\arrow[from=2-1, to=2-2]
	\arrow["{1-\nu}", from=2-2, to=2-3]
\end{tikzcd}\]
Now, applying the five lemma to the induced diagram in cohomology we conclude that
\[
H^{-k} (\holim_n \tau_{\leq 0} M_n) \to H^{-k} (\holim_n M_n)
\]
is an isomorphism for $k \geq 0$. This implies our claim.
\end{proof}

We now discuss some examples where the above Theorem \ref{theorem:tstruct_unbounded} can be applied. The main technical hurdle will usually be the closure under sequential homotopy limits or colimits.
\begin{example} \label{example:dgcat_closed_countable_limits_applythm}
Let $\varcat A$ be a dg-category of derived projectives such that $H^*(\varcat A)$ has countable direct products. Hence, $H^0(\Tw(\varcat A))$ has countable direct products (Proposition \ref{proposition:twisted_closed_prodcoprod}) and we can prove that $\tau_{\leq 0} \Tw(\varcat A)^w_{\leq 0}$ is closed under sequential homotopy colimits as a dg-category concentrated in nonpositive degrees as in \eqref{equation:holim_nonpositive}, concluding that Theorem \ref{theorem:tstruct_unbounded} is applicable and $\Tw(\varcat A)$ can be endowed with the projective t-structure.

To check this, we argue as follows. First, using Lemma \ref{lemma:homotopyproducts_closure}, Lemma \ref{lemma:trunc_prodclosure} and the fact that $\Tw(-)$ preserves quasi-equivalences (Proposition \ref{prop:Tw_preserve_quasiequivalences}), we may assume that $\varcat A$ is strictly concentrated in nonpositive degrees and has strict countable products. Now, let $(X^\bullet_{n+1} \to X^\bullet_n)_{n \geq 0}$ be a sequence of closed degree $0$ morphisms in $\Tw^-(\varcat A)_{\leq 0} = \Tw(\varcat A)^w_{\leq 0}$. Without loss of generality, we may assume that the $X^\bullet_n$ are all strictly concentrated in nonpositive degrees. The homotopy limit of this sequence fits in the following (rotated) pretriangle:
\[
\holim_n X^\bullet_n \to \prod_{n \geq 0} X^\bullet_n \xrightarrow{1-\nu} \prod_{n \geq 0} X^\bullet_n.
\]
The direct product $\prod_{n \geq 0} X^\bullet_n$ is described termwise and it lies in $\Tw(\varcat A)^w_{\leq 0} = \Tw^-(\varcat A)_{\leq 0}$, hence we see that $\holim_n X^\bullet_n$ lies in $\Tw^-(\varcat A)$. We may use the t-structure on $\Tw^-(\varcat A)$ and take the truncation $\tau_{\leq 0} \holim_n X^\bullet_n$. We claim that this objects comes with an isomorphism
\[
\tau_{\leq 0} \Tw(\varcat A)^w_{\leq 0} (-, \tau_{\leq 0} \holim_n X^\bullet_n) \cong \tau_{\leq 0} \holim_n \tau_{\leq 0} \Tw(\varcat A)^w_{\leq 0} (-, X^\bullet_n)
\]
in the derived category, satisfying \eqref{equation:holim_nonpositive}, as desired. Indeed, we have isomorphisms in the derived category:
\begin{align*}
    \tau_{\leq 0} \Tw^-(\varcat A)_{\leq 0} (-, \tau_{\leq 0} \holim_n X^\bullet_n) & \cong \tau_{\leq 0} \Tw^-(\varcat A)_{\leq 0} (-, \holim_n X^\bullet_n) \qquad \text{(cf. Lemma \ref{lemma:truncations_iso_complexes})} \\
    & \cong \tau_{\leq 0} \holim_n \Tw^-(\varcat A)_{\leq 0} (-, X^\bullet_n) \\
    & \cong \tau_{\leq 0} \holim_n \tau_{\leq 0} \Tw^-(\varcat A)_{\leq 0} (-, X^\bullet_n). \qquad \text{(cf. Lemma \ref{lemma:technical_trunc_holim})}
\end{align*}

Clearly, we can dualize the above discussion and prove that, if $\varcat A$ is a derived category of derived injectives such that $H^*(\varcat A)$ has countable under sums, Theorem \ref{theorem:tstruct_unbounded} is applicable and $\Tw(\varcat A)$ is endowed with the injective t-structure.
\end{example}
\begin{example} \label{example:grothendiecktstruct}
Let $\cat A$ be a pretriangulated dg-category endowed with a ``Grothendieck-like'' t-structure as in \cite[Setup 3.1.1]{genovese-ramos-gabrielpopescu}. In particular:
\begin{itemize}
    \item The t-structure $(\cat A_{\leq 0}, \cat A_{\geq 0})$ is non-degenerate.
    \item $H^0(\cat A)$ is well-generated, has arbitrary direct sums and products (cf. \cite[Proposition 8.4.6]{neeman-triangulated}) and the cohomology $H^0_t \colon H^0(\cat A) \to H^0(\cat A)^\heartsuit$ preserves direct sums.
    \item The heart $H^0(\cat A)^\heartsuit$ is a Grothendieck abelian category and $\cat A$ has enough derived injectives.
\end{itemize}
Thanks to the ``reconstruction theorem'' \cite[Theorem 1.3] {genovese-lowen-vdb-dginj}, we have a t-exact quasi-equivalence
\[
\cat A^+ \cong \Tw^+(\varcat J),
\]
where $\varcat J$ is the full dg-subcategory of $\cat A$ spanned by the derived injective objects. $\varcat J$ is a dg-category of derived injectives (cf. \cite[Lemma 6.10]{genovese-lowen-vdb-dginj}), and $\Tw^+(\varcat J)$ is endowed with the t-structure described in Proposition \ref{proposition:tstructures_boundedtw}.

We now prove that $\tau_{\leq 0}\Tw(\varcat J)^w_{\geq 0}$ is closed under sequential homotopy colimits as a dg-category concentrated in nonpositive degrees, so that Theorem \ref{theorem:tstruct_unbounded} is applicable and $\Tw(\varcat J)$ is endowed with the injective t-structure.

To do so, let $(X^\bullet_n \to X^\bullet_{n+1})_{n \geq 0}$ be a sequence of closed degree $0$ morphisms in $\tau_{\leq 0}\Tw(\varcat J)^w_{\geq 0} = \Tw^+(\varcat J)_{\geq 0}$. By our assumptions (in particular: cocompleteness of $H^0(\cat A)$, non-degeneracy and the fact that $H^0_t(-)$ preserves direct sums), we know that the direct sum $\oplus_{n \geq 0} X^\bullet_n$ exists in $\Tw^+(\varcat J)$ and lies in $\Tw^+(\varcat J)_{\geq 0}$. We consider the distinguished triangle
\[
\bigoplus_{n \geq 0} X^\bullet_n \xrightarrow{1-\mu} \bigoplus_{n \geq 0} X^\bullet_n \to \hocolim_n X^\bullet_n.
\]
in $\Tw^+(\varcat J)$. Taking the right truncation $\tau_{\geq 0} \hocolim_n X^\bullet_n$ and arguing as in the above Example \ref{example:dgcat_closed_countable_limits_applythm}, we can find an isomorphism
\[
\tau_{\leq 0} \Tw(\varcat A)^w_{\geq 0} (\tau_{\leq 0} \hocolim_n X^\bullet_n,-) \xrightarrow{\sim} \tau_{\leq 0} \holim_n \tau_{\leq 0} \Tw(\varcat A)^w_{\geq 0} (X^\bullet_n,-)
\]
in the derived category, hence satisfying \eqref{equation:hocolim_nonpositive} as we wanted.
\end{example}
\begin{remark}
In the above Example \ref{example:grothendiecktstruct}, we can take $\cat A = \dercompdg(\mathfrak G)$ to be the derived dg-category of a Grothendieck abelian category $\mathfrak G$, endowed with the natural t-structure. Then, it is not difficult to see that the dg-category of derived injectives of $\dercompdg(\mathfrak G)$ coincides with the (linear) category $\operatorname{Inj}(\mathfrak G)$ of injective objects in $\mathfrak G$.

A direct inspection shows that $\Tw(\operatorname{Inj}(\mathfrak G))$ is just the dg-category $\operatorname{Ch_{dg}}(\operatorname{Inj}(\mathfrak G))$ of complexes of injective objects, and $H^0(\operatorname{Ch_{dg}}(\operatorname{Inj}(\mathfrak G)))$ can be identified with the \emph{homotopy category of injectives} $\mathsf{K}(\operatorname{Inj}(\mathfrak G))$. This is also called the \emph{unseparated derived category $\check{\mathsf{D}}(\mathfrak G)$} of $\mathfrak G$, cf. \cite[C.5.8]{lurie-SAG}. The injective t-structure is such that
\[
\operatorname{Ch_{dg}}(\operatorname{Inj}(\mathfrak G))^{\operatorname{inj}}_{\geq 0} \cong \dercompdg(\mathfrak G)_{\geq 0}.
\]
\end{remark}

\appendix

\section{Derived projectives/injectives and adjacent co-t-structures} \label{appendix:derproj_cotstruct}

In this appendix, we discuss the notions of derived projectives and injectives in triangulated categories endowed with a t-structure, and we prove (Theorem \ref{theorem:derproj_cotstruct}) that they are strictly related to adjacent co-t-structures (see Definition \ref{definition:adjacent_cotstruct}). This result is likely known to experts, albeit perhaps in a different language than ours (cf. \cite{angeleri-marks-vitoria-torsion} or \cite[\S 4]{nicolas-silting}). We still think it could be an interesting addition to the paper and we include it here.

\emph{Derived projectives} and \emph{derived injectives} generalize ordinary projective and injective objects in abelian categories to the framework of t-structures. We shall define them essentially following \cite[\S 5.1]{rizzardo-vdb-nonFM}. Analogous notions have appeared in literature, for example \emph{injective objects in stable $\infty$-categories} (cf. \cite[\S C.5.7]{lurie-SAG} or \emph{Ext-projectives} (cf. \cite{assem-extproj}).
\begin{definition}
Let $\cat T$ be a triangulated category endowed with a t-structure $(\cat T_{\leq 0}, \cat T_{\geq 0})$, and let $P \in \cat \Proj(T^\heartsuit)$ be a projective object in the heart $\cat T^\heartsuit = \cat T_{\leq 0} \cap \cat T_{\geq 0}$. The \emph{derived projective associated to $P$} is an object $S(P)$ which represents the cohomological functor $\cat T^\heartsuit(P,H^0(-)) \colon \cat T \to \Mod(\basering k)$, namely:
\[
\cat T^\heartsuit(P,H^0(-)) \cong \cat T(S(P),-). 
\]
Clearly, if $S(P)$ exists, it is uniquely determined by $P$ up to isomorphism.

An object $Q \in \cat T$ will be called \emph{derived projective} if there is a projective $P \in \Proj(\cat T^\heartsuit)$ such that $Q \cong S(P)$. We will denote by $\DGProj(\cat T)$ the full subcategory of $\cat T$ spanned by the derived projectives.

If for any $P \in \operatorname{Proj}(\cat T^\heartsuit)$ an object $S(P)$ as above exists, we say that \emph{$\cat T$ has derived injectives}. Moreover, if $\cat T$ has derived projectives and the heart $\cat T^\heartsuit$ has enough projectives, we say that \emph{$\cat T$ has enough derived projectives}. 
\end{definition}
\begin{remark}
\emph{Derived injectives} are defined as derived projectives in the opposite triangulated category $\opp{\cat T}$ endowed with the opposite t-structure. For simplicity, we will concentrate on derived projectives, but everything can be dualized to derived injectives in the straightforward way.
\end{remark}
We list some basic properties of derived projectives:
\begin{proposition}[{cf. \cite[Proposition 2.3.3]{genovese-ramos-gabrielpopescu}}] \label{prop:dgproj_basicproperties}
Let $\cat T$ be a triangulated category with a t-structure $(\cat T_{\leq 0}, \cat T_{\geq 0})$, and let $P \in \Proj(\cat T^\heartsuit)$ be a projective object in the heart. Assume that the derived injective $S(P)$ associated to $P$ exists. Then:
	     \begin{enumerate}
	         \item \label{item:dginj_basicproperties_1} $S(P) \in \cat T_{\leq 0}$. 
	         \item \label{item:dginj_basicproperties_2} $H^0(S(P)) \cong P$.
	         \item \label{item:dginj_basicproperties_3} The functor $H^0 \colon \cat T \to \cat T^\heartsuit$ induces an isomorphism
	         \[
	         H^0 \colon \cat T(S(P),A) \xrightarrow{\sim} \cat T^\heartsuit(P,H^0(A)),
	         \]
	         for all $A \in \cat T$.
	     \end{enumerate}
	\end{proposition}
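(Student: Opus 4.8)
The statement is standard (it is essentially \cite[Proposition 2.3.3]{genovese-ramos-gabrielpopescu}), and the plan is to deduce all three items formally from the defining property of $S(P)$ — the natural isomorphism $u \colon \cat T(S(P),-) \xrightarrow{\sim} \cat T^\heartsuit(P,H^0_t(-))$ — together with the t-structure formalism recalled in \S\ref{subsection:tstruct_def}: the truncation functors $\tau_{\leq n},\tau_{\geq n}$, the orthogonality $\cat T(\cat T_{\leq 0},\cat T_{\geq 1})=0$, and the fact that the left aisle is the left orthogonal of $\cat T_{\geq 1}$, i.e. $\cat T_{\leq 0}={}^{\perp}(\cat T_{\geq 1})$.

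For \ref{item:dginj_basicproperties_1} I would show that $S(P)$ is left-orthogonal to $\cat T_{\geq 1}$. If $A\in\cat T_{\geq 1}$ then $\tau_{\leq 0}A=0$, hence $H^0_t(A)=0$, so $\cat T(S(P),A)\cong\cat T^\heartsuit(P,H^0_t(A))=0$. Since this holds for all $A\in\cat T_{\geq 1}$ and $\cat T_{\leq 0}={}^{\perp}(\cat T_{\geq 1})$, we get $S(P)\in\cat T_{\leq 0}$.

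The core of \ref{item:dginj_basicproperties_2} and \ref{item:dginj_basicproperties_3} is a Yoneda-type identity: writing $u_A$ for the components of $u$ and setting $\xi:=u_{S(P)}(1_{S(P)})\in\cat T^\heartsuit(P,H^0_t(S(P)))$ for the universal element, naturality of $u$ along an arbitrary $f\colon S(P)\to A$ yields $u_A(f)=H^0_t(f)\circ\xi$ for every $A$ and $f$; that is, $u_A=(-\circ\xi)\circ H^0_t$. Now I would specialize to $A=Y\in\cat T^\heartsuit$: by \ref{item:dginj_basicproperties_1} we have $S(P)\in\cat T_{\leq 0}$, so $H^0_t(S(P))=\tau_{\geq 0}S(P)$ lies in the heart and the map $f\mapsto H^0_t(f)$ is the adjunction bijection $\cat T(S(P),Y)\xrightarrow{\sim}\cat T^\heartsuit(H^0_t(S(P)),Y)$ for $Y\in\cat T^\heartsuit$. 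Comparing with the identity $u_Y=(-\circ\xi)\circ H^0_t$ and using that $u_Y$ is an isomorphism forces $(-\circ\xi)\colon\cat T^\heartsuit(H^0_t(S(P)),Y)\to\cat T^\heartsuit(P,Y)$ to be an isomorphism for all $Y\in\cat T^\heartsuit$; by the Yoneda lemma in $\cat T^\heartsuit$ this means $\xi\colon P\to H^0_t(S(P))$ is an isomorphism, which is \ref{item:dginj_basicproperties_2}. Then \ref{item:dginj_basicproperties_3} is immediate: under the identification $P\cong H^0_t(S(P))$ provided by $\xi$, the map on $\cat T(S(P),A)$ induced by $H^0_t$ is exactly $f\mapsto H^0_t(f)\circ\xi=u_A(f)$, an isomorphism for every $A$ by hypothesis.

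I expect the only delicate point to be the bookkeeping around $u$: correctly isolating the universal element $\xi$, checking the naturality identity $u_A(f)=H^0_t(f)\circ\xi$, and verifying that the adjunction isomorphism at an object of the heart genuinely is ``apply $H^0_t$'' — the last requires normalizing the truncation functors so that the unit $A\to\tau_{\geq 0}A$ is the identity on $\cat T^\heartsuit$. Everything else is routine diagram-chasing with the t-structure axioms.
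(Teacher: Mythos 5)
Your proof is correct. The paper itself gives no proof of this proposition (it simply defers to \cite[Proposition 2.3.3]{genovese-ramos-gabrielpopescu}), and your argument — orthogonality against $\cat T_{\geq 1}$ for item \ref{item:dginj_basicproperties_1}, then extracting the universal element $\xi = u_{S(P)}(1_{S(P)})$, identifying the truncation adjunction at the heart with ``apply $H^0_t$'', and invoking Yoneda in $\cat T^\heartsuit$ to see that $\xi \colon P \to H^0_t(S(P))$ is an isomorphism — is exactly the standard representability argument one would expect the cited reference to use.
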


We already mentioned above that derived projectives are essentially the same concept as Ext-projectives (cf. \cite[\S 1]{assem-extproj} for the definition). This is made clearer by the following result:
\begin{proposition}[{cf. \cite[Proposition 2.3.5]{genovese-ramos-gabrielpopescu}}] \label{prop:dgproj_equivalent_definitions}
	    Let $\cat T$ be a triangulated category with a t-structure, and let $Q \in \cat T$ be an object. The following are equivalent:
	    \begin{enumerate}
	        \item $Q$ is a derived projective.
	        \item $Q \in \cat T_{\leq 0}$ and for any $Z \in \cat T_{\leq 0}$ we have
	        \begin{equation}
	            \cat T(Q, Z[1]) \cong 0. \label{eq:dgproj_vanishing}
	        \end{equation}
	    \end{enumerate}
	\end{proposition}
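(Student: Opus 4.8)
The plan is to prove the two implications separately, with $P := H^0(Q)$ serving as the natural candidate for a projective object of $\cat T^\heartsuit$ realizing $Q \cong S(P)$.

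That a derived projective satisfies the stated properties is the easy direction. If $Q \cong S(P)$ with $P \in \Proj(\cat T^\heartsuit)$, then $Q \in \cat T_{\leq 0}$ by Proposition \ref{prop:dgproj_basicproperties}(\ref{item:dginj_basicproperties_1}); and for $Z \in \cat T_{\leq 0}$ we have $Z[1] \in \cat T_{\leq -1}$, hence $H^0(Z[1]) = 0$, so the defining isomorphism $\cat T(S(P), -) \cong \cat T^\heartsuit(P, H^0(-))$ yields $\cat T(Q, Z[1]) \cong \cat T^\heartsuit(P, H^0(Z[1])) = 0$.

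For the converse, I would first record that $\{\,Z[1] : Z \in \cat T_{\leq 0}\,\} = \cat T_{\leq -1}$, so that the hypothesis says exactly that $\cat T(Q, W) = 0$ for all $W \in \cat T_{\leq -1}$. The heart of the argument is to produce a natural isomorphism $\cat T(Q, A) \cong \cat T^\heartsuit(P, H^0(A))$ for every $A \in \cat T$, obtained by chaining three long exact sequences along functorial truncation triangles. Applying $\cat T(Q, -)$ to $\tau_{\leq -1}A \to A \to \tau_{\geq 0}A \to (\tau_{\leq -1}A)[1]$ and using the vanishing on $\cat T_{\leq -1}$ gives $\cat T(Q, A) \cong \cat T(Q, \tau_{\geq 0}A)$. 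Applying $\cat T(Q, -)$ to $H^0(A) \to \tau_{\geq 0}A \to \tau_{\geq 1}A \to H^0(A)[1]$ and using that $\cat T(X, Y) = 0$ whenever $X \in \cat T_{\leq 0}$ and $Y \in \cat T_{\geq 1}$ (here $X = Q$, and both $\tau_{\geq 1}A$ and $(\tau_{\geq 1}A)[-1]$ lie in $\cat T_{\geq 1}$) gives $\cat T(Q, \tau_{\geq 0}A) \cong \cat T(Q, H^0(A))$. Finally, applying $\cat T(-, C)$ with $C \in \cat T^\heartsuit$ to $\tau_{\leq -1}Q \to Q \to P \to (\tau_{\leq -1}Q)[1]$ (here $\tau_{\geq 0}Q = P$ since $Q \in \cat T_{\leq 0}$) and using $\cat T(X, Y) = 0$ for $X \in \cat T_{\leq -1}$, $Y \in \cat T_{\geq 0}$ gives $\cat T(Q, C) \cong \cat T(P, C) = \cat T^\heartsuit(P, C)$. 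Composing these natural isomorphisms produces the desired one.

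Two conclusions then follow. First, $P$ is projective in $\cat T^\heartsuit$: a short exact sequence $0 \to C' \to C \to C'' \to 0$ in the heart gives a triangle $C' \to C \to C'' \to C'[1]$, and since $C'[1] \in \cat T_{\leq -1}$ we have $\cat T(Q, C'[1]) = 0$, so $\cat T(Q, C) \to \cat T(Q, C'')$ is surjective; transporting along the isomorphism above, $\cat T^\heartsuit(P, C) \to \cat T^\heartsuit(P, C'')$ is surjective, and left-exactness of $\Hom$ is automatic. Second, the natural isomorphism $\cat T(Q, -) \cong \cat T^\heartsuit(P, H^0(-))$ exhibits $Q$ as a representing object for $\cat T^\heartsuit(P, H^0(-))$, so $Q \cong S(P)$ and $Q$ is a derived projective. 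All of this is routine manipulation with the t-structure; the only point requiring attention is the bookkeeping — checking for each Hom-vanishing that the relevant truncation (or its shift) really lies in the aisle invoked, and tracking naturality through the composite isomorphism — and I do not anticipate a genuine obstacle.
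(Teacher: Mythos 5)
Your proof is correct; the paper itself gives no proof of this proposition (it only cites \cite[Proposition 2.3.5]{genovese-ramos-gabrielpopescu}), and your argument — reducing $\cat T(Q,A)$ to $\cat T^\heartsuit(H^0(Q),H^0(A))$ via the three truncation triangles and then reading off projectivity of $H^0(Q)$ from the vanishing on $\cat T_{\leq -1}$ — is the standard one and all the aisle-membership checks you flag do go through.
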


We will be interested in triangulated categories $\cat T$ endowed with a t-structure $(\cat T_{\leq 0}, \cat T_{\geq 0})$ and a left adjacent co-t-structure $(\cat T^w_{\geq 0}, \cat T^w_{\leq 0}=\cat T_{\leq 0})$ (cf. Definition \ref{definition:adjacent_cotstruct}). We recall that the \emph{co-heart} of the co-t-structure $(\cat T^w_{\geq 0}, \cat T^w_{\leq 0})$ is the intersection $\cat T^w_{\geq 0} \cap \cat T^w_{\leq 0}$. As an immediate corollary of the above Proposition \ref{prop:dgproj_equivalent_definitions}, we can prove:
\begin{corollary} \label{corollary:coheart_implies_dgproj}
Let $\cat T$ be a triangulated category with a t-structure $(\cat T_{\leq 0}, \cat T_{\geq 0})$ and a left adjacent co-t-structure $(\cat T^w_{\geq 0}, \cat T^w_{\leq 0}=\cat T_{\leq 0})$. Let $Q \in \cat T^w_{\geq 0} \cap \cat T^w_{\leq 0}$ be an object in the co-heart. Then, $Q$ is a derived projective object. More precisely, $H^0(Q)$ is a projective object in $\cat T^\heartsuit$ and $Q \cong S(H^0(Q))$.
\end{corollary}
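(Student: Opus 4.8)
The plan is to obtain the corollary as a short, essentially formal consequence of Proposition \ref{prop:dgproj_equivalent_definitions} (the intrinsic characterization of derived projectives) together with Proposition \ref{prop:dgproj_basicproperties}, the only extra input being the orthogonality that is built into the axioms of a co-t-structure. First I would check that $Q$ satisfies condition (2) of Proposition \ref{prop:dgproj_equivalent_definitions}. The membership $Q \in \cat T_{\leq 0}$ is immediate, since $Q$ lies in the co-heart and $\cat T^w_{\leq 0} = \cat T_{\leq 0}$ by hypothesis. For the remaining vanishing clause, fix $Z \in \cat T_{\leq 0} = \cat T^w_{\leq 0}$; rewriting $\cat T(Q, Z[1]) \cong \cat T(Q[-1], Z)$, I would observe that $Q \in \cat T^w_{\geq 0}$ forces $Q[-1] \in \cat T^w_{\geq 1}$ (the shifted right coaisles being defined from $\cat T^w_{\geq 0}$), and then apply the orthogonality $\cat T(\cat T^w_{\geq 1}, \cat T^w_{\leq 0}) = 0$, which is part of the definition of a co-t-structure recalled in \S \ref{subsection:tstruct_def}. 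This gives $\cat T(Q, Z[1]) = 0$, so Proposition \ref{prop:dgproj_equivalent_definitions} lets us conclude that $Q$ is a derived projective.

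It then remains to identify the projective object that $Q$ represents. By the definition of a derived projective there is some $P \in \Proj(\cat T^\heartsuit)$ with $Q \cong S(P)$; applying $H^0(-)$ and using part \ref{item:dginj_basicproperties_2} of Proposition \ref{prop:dgproj_basicproperties} gives $H^0(Q) \cong H^0(S(P)) \cong P$. Hence $H^0(Q)$ is a projective object of the heart, and $Q \cong S(P) \cong S(H^0(Q))$, which is exactly the assertion.

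Since the whole argument is a direct application of the two cited propositions, there is no real obstacle; the only point that demands care is the bookkeeping of sign and shift conventions for the co-t-structure — in particular making sure the orthogonality axiom is invoked in the correct direction (a ``high weight'' object against a ``low weight'' one) and that $\cat T^w_{\geq 0}[-1] = \cat T^w_{\geq 1}$ with the conventions fixed earlier.
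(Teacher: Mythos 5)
Your argument is correct and is exactly the route the paper intends: it states the corollary as an immediate consequence of Proposition \ref{prop:dgproj_equivalent_definitions}, and your verification of condition (2) via the co-t-structure orthogonality $\cat T(\cat T^w_{\geq 0}, \cat T^w_{\leq -1})=0$ (equivalently $\cat T(Q[-1],Z)=0$ for $Q[-1]\in\cat T^w_{\geq 1}$, $Z\in\cat T^w_{\leq 0}$), together with $H^0(S(P))\cong P$ from Proposition \ref{prop:dgproj_basicproperties}, is precisely the intended filling-in. Your shift bookkeeping is consistent with the paper's conventions, so there is nothing to add.
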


Under reasonable assumptions on $\cat T$, the presence of a left adjacent co-t-structure actually implies that the t-structure has enough derived projectives, and the derived projectives coincide with the co-heart.
\begin{theorem} \label{theorem:derproj_cotstruct}
Let $\cat T$ be a triangulated category with a non-degenerate t-structure $(\cat T_{\leq 0}, \cat T_{\geq 0})$ which admits a left adjacent co-t-structure $(\cat T^w_{\geq 0}, \cat T^w_{\leq 0})$, such that the co-heart $\cat T^w_{\geq 0} \cap \cat T^w_{\leq 0}$ is idempotent complete. Then, $\cat T$ has enough derived projectives, and the derived projectives coincide with the co-heart:
\[
\DGProj(\cat T) = \cat T^w_{\geq 0} \cap \cat T^w_{\leq 0}.
\]
\end{theorem}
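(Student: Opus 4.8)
The plan is to prove the equality $\DGProj(\cat T)=\cat T^w_{\geq 0}\cap\cat T^w_{\leq 0}$ by two inclusions, and then verify separately the two conditions packaged in ``enough derived projectives'': that every projective of the heart is represented by a derived projective, and that the heart has enough projectives. Write $\cat C=\cat T^w_{\geq 0}\cap\cat T^w_{\leq 0}$, and first record the translations coming from left adjacency, $\cat T^w_{\leq 0}=\cat T_{\leq 0}$: shifting gives $\cat T^w_{\leq -1}=\cat T^w_{\leq 0}[1]=\cat T_{\leq 0}[1]=\cat T_{\leq -1}$, and also ${}^{\perp}(\cat T^w_{\leq -1})=\cat T^w_{\geq 0}$, the latter being a general fact about co-t-structures: ``$\subseteq$'' is the orthogonality axiom, while for ``$\supseteq$'' one takes $Q$ with $\cat T(Q,\cat T^w_{\leq -1})=0$, picks a weight decomposition $X\to Q\to Y\to X[1]$ with $X\in\cat T^w_{\geq 0}$ and $Y\in\cat T^w_{\leq -1}$, observes the middle morphism $Q\to Y$ vanishes, so the triangle splits and $Q$ is a retract of $X$, hence lies in $\cat T^w_{\geq 0}$ by Karoubi-closedness of the co-aisle. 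With these in hand, $\cat C\subseteq\DGProj(\cat T)$ is exactly Corollary \ref{corollary:coheart_implies_dgproj}, while for $Q\in\DGProj(\cat T)$ Proposition \ref{prop:dgproj_equivalent_definitions} says precisely that $Q\in\cat T_{\leq 0}=\cat T^w_{\leq 0}$ and $\cat T(Q,\cat T_{\leq -1})=0$, i.e.\ $Q\in{}^{\perp}(\cat T^w_{\leq -1})=\cat T^w_{\geq 0}$; thus $Q\in\cat C$, giving the reverse inclusion.

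Next I would show $\cat T^{\heartsuit}$ has enough projectives. Given $A\in\cat T^{\heartsuit}\subseteq\cat T_{\leq 0}=\cat T^w_{\leq 0}$, take a weight decomposition $X_A\to A\to Y_A\to X_A[1]$ with $X_A\in\cat T^w_{\geq 0}$ and $Y_A\in\cat T^w_{\leq -1}$. Rotating, $X_A$ is an extension of $A\in\cat T^w_{\leq 0}$ by $Y_A[-1]\in\cat T^w_{\leq -1}[-1]=\cat T^w_{\leq 0}$, so $X_A\in\cat T^w_{\leq 0}$ by extension-closedness of the co-aisle; hence $X_A\in\cat C$, and by Corollary \ref{corollary:coheart_implies_dgproj} together with Proposition \ref{prop:dgproj_basicproperties} the object $H^0(X_A)$ is projective in $\cat T^{\heartsuit}$. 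Applying $H^0_t$ to the triangle and using $H^0_t(Y_A)=0$ (since $Y_A\in\cat T^w_{\leq -1}=\cat T_{\leq -1}$), the induced map $H^0(X_A)\to H^0(A)=A$ is an epimorphism, so every object of the heart receives an epimorphism from a projective.

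It then remains to produce $S(P)$ for an arbitrary $P\in\Proj(\cat T^{\heartsuit})$. Apply the previous step to $P$: there is $Q\in\cat C$ with an epimorphism $\pi\colon H^0(Q)\twoheadrightarrow P$, which splits (say $\pi\iota=\mathrm{id}_P$) by projectivity of $P$, so $e:=\iota\pi$ is an idempotent on $H^0(Q)$ with $\mathrm{im}(e)\cong P$. Since $Q\cong S(H^0(Q))$, Proposition \ref{prop:dgproj_basicproperties}(3) gives an isomorphism $\cat T(Q,Q)\xrightarrow{\ \sim\ }\cat T^{\heartsuit}(H^0(Q),H^0(Q))$, through which $e$ lifts to an idempotent $\tilde e\colon Q\to Q$ in $\cat T$. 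Now invoke the hypothesis that $\cat C$ is idempotent complete: $\tilde e$ splits in $\cat C$, yielding $Q\cong Q'\oplus Q''$ with $Q',Q''\in\cat C$ and, applying the additive functor $H^0$, $H^0(Q')\cong\mathrm{im}(e)\cong P$. By Corollary \ref{corollary:coheart_implies_dgproj} once more, $Q'\cong S(H^0(Q'))\cong S(P)$, so $S(P)$ exists; combined with the previous paragraph, $\cat T$ has enough derived projectives.

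The computations are routine; the genuinely load-bearing point is the last step, where idempotent completeness of the co-heart is exactly what lets one cut out $S(P)$ for an \emph{arbitrary} projective $P$ of the heart, not merely one already of the form $H^0$ of a co-heart object. A secondary point to handle carefully is that the ``positive-weight'' part of a weight decomposition of any object of $\cat T^w_{\leq 0}$ again lands in the co-heart, which rests on the extension- and Karoubi-closedness of the co-aisles built into the definition of a co-t-structure.
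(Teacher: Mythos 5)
Your proof is correct, and it follows the paper's overall strategy in two of its three steps: a weight decomposition of an object of the heart produces an epimorphism from a projective (the paper's Lemma \ref{lemma:cotstruct_enoughproj}), and idempotent splitting in the Karoubi-closed co-heart cuts out $S(P)$ for an arbitrary projective $P$ (the paper's Lemma \ref{lemma:S(P)}). Where you genuinely diverge is in the inclusion $\DGProj(\cat T)\subseteq \cat T^w_{\geq 0}\cap\cat T^w_{\leq 0}$: the paper deduces it from the uniqueness of derived projectives, noting that a derived projective $Q$ must be isomorphic to the object $S(H^0(Q))$ already constructed inside the co-heart via Lemma \ref{lemma:S(P)}, whereas you argue directly from the characterization in Proposition \ref{prop:dgproj_equivalent_definitions} together with the identity ${}^\perp(\cat T^w_{\leq -1})=\cat T^w_{\geq 0}$, which you prove by splitting a weight triangle and invoking retract-closedness of the co-aisle. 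Your route is more self-contained at the level of the co-t-structure axioms and does not need the existence of $S(H^0(Q))$ to have been established first; the paper's route is shorter once Lemma \ref{lemma:S(P)} is available. Likewise, you prove that the weight-positive part $X_A$ of $A\in\cat T^\heartsuit$ lies in $\cat T^w_{\leq 0}$ by extension-closedness of the co-aisle, where the paper computes the relevant orthogonality directly; both are fine. Two small remarks: first, your retract argument uses that the co-aisles are Karoubi-closed, which is indeed built into the definitions cited by the paper (Bondarko, Pauksztello) but deserves an explicit mention; second, in the sentence establishing ${}^\perp(\cat T^w_{\leq -1})=\cat T^w_{\geq 0}$ the labels $\subseteq$ and $\supseteq$ are interchanged relative to the two arguments you then give --- the mathematics is right, only the labels are swapped.
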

The proof of Theorem \ref{theorem:derproj_cotstruct} is based on the following two lemmas:
\begin{lemma} \label{lemma:cotstruct_enoughproj}
Let $\cat T$ be a triangulated category with a t-structure $(\cat T_{\leq 0}, \cat T_{\geq 0})$ and a left adjacent co-t-structure $(\cat T^w_{\geq 0}, \cat T^w_{\leq 0})$. Let $A \in \cat T^\heartsuit$, and consider a non-functorial distinguished triangle
\[
\sigma_{\geq 0} A \to A \to \sigma_{\leq -1} A
\]
obtained from the co-t-structure on $\cat T$, where $\sigma_{\geq 0} A \in \cat T^w_{\geq 0}$ and $\sigma_{\leq -1} A \in \cat T^w_{\leq -1}$. Then, $\sigma_{\geq 0} A$ lies in the co-heart $\cat T^w_{\geq 0} \cap \cat T^w_{\leq 0}$, and the morphism $\sigma_{\geq 0} A \to A$ induces an epimorphism
\[
H^0(\sigma_{\geq 0} A) \to A.
\]
In particular, $\sigma_{\geq 0} A$ is a derived projective object and $\cat T^\heartsuit$ has enough projectives.
\begin{proof}
To prove that $\sigma_{\geq 0} A \in \cat T^w_{\leq 0} = \cat T_{\leq 0}$, we fix $Z \in \cat T_{\geq 1}$ and we check that $\cat T(\sigma_{\geq 0} A, Z) = 0$. We have an exact sequence:
\[
\cat T(A,Z) \to \cat T(\sigma_{\geq 0} A, Z) \to \cat T((\sigma_{\leq -1} A)[-1], Z).
\]
Since $A \in \cat T^\heartsuit$, we know that $\cat T(A,Z) =0$. Moreover, $(\sigma_{\leq -1}A)[-1] \in \cat T^w_{\leq 0} = \cat T_{\leq 0}$, hence $\cat T((\sigma_{\leq -1} A)[-1], Z) = 0$. By exactness we conclude that $\cat T(\sigma_{\geq 0} A, Z) = 0$, as claimed.
\end{proof}
\end{lemma}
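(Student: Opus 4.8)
The plan is to read off all three assertions from the single distinguished triangle
\[
(\sigma_{\leq -1} A)[-1] \to \sigma_{\geq 0} A \to A \to \sigma_{\leq -1} A
\]
supplied by the co-t-structure, testing it once against the objects of $\cat T_{\geq 1}$ and once through the cohomological functor $H^0 \colon \cat T \to \cat T^\heartsuit$.

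First I would place $\sigma_{\geq 0} A$ in the co-heart. By construction $\sigma_{\geq 0} A \in \cat T^w_{\geq 0}$, so it remains to show $\sigma_{\geq 0} A \in \cat T^w_{\leq 0} = \cat T_{\leq 0}$, equivalently $\cat T(\sigma_{\geq 0} A, Z) = 0$ for all $Z \in \cat T_{\geq 1}$. Applying $\cat T(-,Z)$ to the triangle produces the exact sequence $\cat T(A,Z) \to \cat T(\sigma_{\geq 0} A, Z) \to \cat T((\sigma_{\leq -1} A)[-1], Z)$; the left term vanishes since $A \in \cat T^\heartsuit \subseteq \cat T_{\leq 0}$ and $Z \in \cat T_{\geq 1}$, and the right term vanishes since shift-compatibility of the coaisles together with the adjacency hypothesis gives $(\sigma_{\leq -1} A)[-1] \in \cat T^w_{\leq 0} = \cat T_{\leq 0}$. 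Exactness then yields $\sigma_{\geq 0} A \in \cat T^w_{\geq 0} \cap \cat T^w_{\leq 0}$.

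Next I would produce the epimorphism. Applying $H^0$ to the same triangle gives an exact sequence $H^0(\sigma_{\geq 0} A) \to H^0(A) \to H^0(\sigma_{\leq -1} A)$ in $\cat T^\heartsuit$. Here $H^0(A) = A$ because $A \in \cat T^\heartsuit$, and $H^0(\sigma_{\leq -1} A) = 0$ because the same shift-compatibility and adjacency give $\cat T^w_{\leq -1} = \cat T^w_{\leq 0}[1] = \cat T_{\leq 0}[1] = \cat T_{\leq -1}$, so $\sigma_{\leq -1} A$ has vanishing zeroth cohomology. Exactness forces the map $H^0(\sigma_{\geq 0} A) \to A$ to be an epimorphism.

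Finally, the remaining assertions are immediate: being in the co-heart, $\sigma_{\geq 0} A$ is a derived projective object by Corollary \ref{corollary:coheart_implies_dgproj}, which moreover tells us that $H^0(\sigma_{\geq 0} A)$ is projective in $\cat T^\heartsuit$ and that $\sigma_{\geq 0} A \cong S(H^0(\sigma_{\geq 0} A))$; combining this with the epimorphism of the previous step and the arbitrariness of $A \in \cat T^\heartsuit$ gives exactly that $\cat T^\heartsuit$ has enough projectives. I expect the only genuinely delicate point to be the bookkeeping with the shift conventions for co-t-structures, namely making sure that the adjacency equality $\cat T^w_{\leq 0} = \cat T_{\leq 0}$ really does propagate to give $(\sigma_{\leq -1} A)[-1] \in \cat T_{\leq 0}$ and $\sigma_{\leq -1} A \in \cat T_{\leq -1}$; once that is pinned down, everything else is a formal diagram chase with the cohomological functor.
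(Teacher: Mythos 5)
Your proof is correct and follows the same argument as the paper for the key point, namely testing $\sigma_{\geq 0} A$ against $Z \in \cat T_{\geq 1}$ via the exact sequence $\cat T(A,Z) \to \cat T(\sigma_{\geq 0}A,Z) \to \cat T((\sigma_{\leq -1}A)[-1],Z)$. You additionally write out the epimorphism step via the long exact sequence of $H^0_t$ and the vanishing $H^0_t(\sigma_{\leq -1}A)=0$ (the paper's proof leaves this implicit), and your shift bookkeeping $\cat T^w_{\leq -1} = \cat T^w_{\leq 0}[1] = \cat T_{\leq 0}[1] = \cat T_{\leq -1}$ matches the paper's conventions.
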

\begin{lemma} \label{lemma:S(P)}
Let $\cat T$ be a triangulated category with a non degenerate t-structure $(\cat T_{\leq 0}, \cat T_{\geq 0})$ and a left adjacent co-t-structure $(\cat T^w_{\geq 0}, \cat T^w_{\leq 0})$. Let $P \in \Proj(\cat T^\heartsuit)$. Assume that the co-heart $\cat T^w_{\geq 0} \cap \cat T^w_{\leq 0}$ is idempotent complete. Then, there exists $S(P) \in \cat T^w_{\geq 0} \cap \cat T^w_{\leq 0}$ such that $H^0(S(P)) \cong P$.
\end{lemma}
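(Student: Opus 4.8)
The plan is to produce $S(P)$ as a direct summand of the co-heart object $\sigma_{\geq 0}P$, by lifting an idempotent from the heart and splitting it. First I would apply Lemma~\ref{lemma:cotstruct_enoughproj} to the object $P \in \cat T^\heartsuit$ itself: choosing a distinguished triangle $\sigma_{\geq 0}P \to P \to \sigma_{\leq -1}P$ coming from the co-t-structure, we obtain an object $Q := \sigma_{\geq 0}P$ lying in the co-heart $\cat T^w_{\geq 0} \cap \cat T^w_{\leq 0}$, together with an epimorphism $\pi \colon H^0(Q) \to P$ in $\cat T^\heartsuit$. Since $P$ is projective, $\pi$ splits; fixing a section $\iota \colon P \to H^0(Q)$ with $\pi\iota = 1_P$, the endomorphism $e := \iota\pi$ of $H^0(Q)$ is an idempotent whose image is a copy of $P$.

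Next I would lift $e$ to an idempotent on $Q$. By Corollary~\ref{corollary:coheart_implies_dgproj}, $Q$ is a derived projective and $Q \cong S(H^0(Q))$, so Proposition~\ref{prop:dgproj_basicproperties}(\ref{item:dginj_basicproperties_3}) shows that
\[
H^0 \colon \cat T(Q,Q) \xrightarrow{\ \sim\ } \cat T^\heartsuit(H^0(Q),H^0(Q))
\]
is a bijection, and since $H^0$ is a functor it is a ring isomorphism. Hence there is a unique $\tilde e \in \cat T(Q,Q)$ with $H^0(\tilde e) = e$, and $\tilde e^2 = \tilde e$ because $e^2 = e$.

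Finally I would split $\tilde e$ inside the co-heart. By hypothesis the co-heart $\cat T^w_{\geq 0} \cap \cat T^w_{\leq 0}$ is idempotent complete, so there are an object $S(P)$ of the co-heart and morphisms $r \colon Q \to S(P)$, $s \colon S(P) \to Q$ with $rs = 1_{S(P)}$ and $sr = \tilde e$. Then $S(P) \in \cat T^w_{\geq 0} \cap \cat T^w_{\leq 0}$, and applying $H^0$ gives $H^0(r)H^0(s) = 1$ and $H^0(s)H^0(r) = e$, so $H^0(S(P))$ is the image of the idempotent $e$ on $H^0(Q)$, that is, $H^0(S(P)) \cong P$. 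This gives the desired object; a posteriori, Corollary~\ref{corollary:coheart_implies_dgproj} identifies $S(P)$ with the derived projective associated to $H^0(S(P)) \cong P$, which justifies the notation.

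The computations here are all routine; the point where some care is genuinely needed — and the only place the hypotheses are used in an essential way — is the combination of two facts: that $H^0$ restricts to an \emph{isomorphism} on endomorphism rings of co-heart objects (so idempotents of the heart lift canonically to $\cat T$), and that the co-heart is idempotent complete (so those lifted idempotents actually split \emph{within} the co-heart rather than only in an idempotent completion). That is the conceptual heart of the argument.
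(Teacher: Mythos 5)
Your proof is correct and follows essentially the same route as the paper: produce $\sigma_{\geq 0}P$ in the co-heart via Lemma \ref{lemma:cotstruct_enoughproj}, use projectivity of $P$ to get an idempotent on $H^0(\sigma_{\geq 0}P)$, lift it uniquely through the isomorphism of endomorphism rings coming from derived projectivity, and split it using idempotent completeness of the co-heart. The only (immaterial) difference is that you realize $S(P)$ as the image of the lifted idempotent $\widetilde{\iota\pi}$, whereas the paper takes the kernel of the complementary idempotent.
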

\begin{proof}
Let $P \in \Proj(\cat T^\heartsuit)$. We consider a (non-functorial) distinguished triangle given by the co-t-structure $(\cat T^w_{\geq 0}, \cat T^w_{\leq 0})$:
\[
\sigma_{\geq 0} P \to P \to \sigma_{\leq -1} P.
\]
From Lemma \ref{lemma:cotstruct_enoughproj} we know that $\sigma_{\geq 0} P \in \cat T^w_{\geq 0} \cap \cat T^w_{\leq 0}$ and $H^0(\sigma_{\geq 0} P) \to P$ is an epimorphism. Since $P$ is projective, it has a section $P \to H^0(\sigma_{\geq 0} P)$. From this, we obtain an idempotent $e \colon H^0(\sigma_{\geq 0} P) \to H^0(\sigma_{\geq 0} P)$ such that $P = \ker(e)$. Since $\sigma_{\geq 0} P = S(H^0(\sigma_{\geq 0} P))$ is derived projective (Corollary \ref{corollary:coheart_implies_dgproj}), we may uniquely lift $e$ to an idempotent
\[
\tilde{e} \colon \sigma_{\geq 0} P \to \sigma_{\geq 0} P
\]
such that $H^0(\tilde{e}) = e$. Since the co-heart $\cat T^w_{\geq 0} \cap \cat T^w_{\leq 0}$ is idempotent complete, $\tilde{e}$ has a kernel which is a direct summand of $\sigma_{\geq 0} P$. 

We now set
\[
S(P) = \ker(\tilde{e}).
\]
Being a direct summand of $\sigma_{\geq 0} P$, which lies in the co-heart (which is closed under finite direct sums and summands in $\cat T$), we have that $S(P) \in \cat T^w_{\geq 0} \cap \cat T^w_{\leq 0}$. The idempotent $\tilde{e}$ can be viewed as a projection map 
\[
\begin{psmallmatrix} 0 & 0 \\ 0 & 1 \end{psmallmatrix} \colon S(P) \oplus Q \to S(P) \oplus Q,
\]
where $Q$ is the complement of $S(P)$: $S(P) \oplus Q \cong \sigma_{\geq 0} P$. From this, we see that the (additive) functor $H^0$ preserves the kernel of $\tilde{e}$, and finally we obtain $H^0(S(P)) \cong P$.
\end{proof}

\begin{proof}[Proof of Theorem \ref{theorem:derproj_cotstruct}]
From Lemma \ref{lemma:cotstruct_enoughproj} we know that $\cat T^\heartsuit$ has enough projectives, and from Lemma \ref{lemma:S(P)} we easily see that $\cat T$ has derived projectives. Indeed, from Corollary \ref{corollary:coheart_implies_dgproj} we know that $S(P)$ is derived projective, and for any $X \in \cat T$, we have isomorphisms:
\[
\cat T(S(P),X) \xrightarrow{\sim} \cat T^\heartsuit(H^0(S(P)),H^0(X)) \xrightarrow{\sim} \cat T^\heartsuit(P,H^0(X)),
\]
natural in $X \in \cat T$. Hence, $\cat T$ has enough derived projectives.

From Corollary \ref{corollary:coheart_implies_dgproj}, we already know that $\cat T^w_{\geq 0} \cap \cat T^w_{\leq 0} \subseteq \DGProj(\cat T)$. To see the other inclusion, let $Q \in \DGProj(\cat T)$ be a derived projective. Thanks to Lemma \ref{lemma:S(P)} and the previous part of the proof, we can find an object $S(H^0(Q))$ in the co-heart $\cat T^w_{\geq 0} \cap \cat T^w_{\leq 0}$ which is the derived projective associated to the projective object $H^0(Q)$. By uniqueness, we conclude that $Q \cong S(H^0(Q))$ indeed lies in the co-heart $\cat T^w_{\geq 0} \cap \cat T^w_{\leq 0}$, for it is closed under isomorphisms in $\cat T$.
\end{proof}
\begin{remark}
The ``recostruction theorem'' \cite[Theorem 7.2]{genovese-lowen-vdb-dginj}, combined with Proposition \ref{proposition:tstructures_boundedtw}, can be viewed as a (partial) converse to the above Theorem \ref{theorem:derproj_cotstruct}.
\end{remark}

\bibliographystyle{amsplain}

\end{document}